\definecolor{darkblue}{cmyk}{1,0, 0,.7}
\definecolor{orange}{cmyk}{0, 0.61, 0.87, 0}
\newcommand\note[1]{\mbox{}\marginpar{\scriptsize\raggedright\hspace{0pt}\color{darkblue}#1}}
\renewcommand\note[1]{}
\newcommand{\Label}[1]{\label{#1}\note{#1}}
\renewcommand{\Label}[1]{\label{#1}}
\let\oldmarginpar\marginpar
\renewcommand\marginpar[1]{\-\oldmarginpar[\raggedleft\footnotesize #1]%
{\raggedright\footnotesize #1}}
\newtheorem{lemma}{Lemma}[section]
\newtheorem{proposition}[lemma]{Proposition}
\newtheorem{theorem}[lemma]{Theorem}
\newtheorem{claim}[lemma]{Claim}
\newtheorem{question}[lemma]{Question}
\newtheorem{remark}[lemma]{Remark}
\newtheorem{corollary}[lemma]{Corollary}
\newtheorem{definition}[lemma]{Definition}
\newcommand{\Proof}{{\noindent \it Proof. }}
\newcommand{\proofof}[1]{{\noindent \it Proof of #1. }}
\newcommand{\cm}[1]{{\small{\sf #1}}}
\newcommand{\Qed}[1]{\nopagebreak[4]{\tiny \hfill
\fbox{\ref{#1}} \linebreak }\pagebreak[2]}
\renewcommand{\Im}{\operatorname{Im}}
\newcommand{\length}{\operatorname{length}}
\newcommand{\dist}{\operatorname{dist}}
\newcommand{\Conv}{\operatorname{Conv}}
\newcommand{\Core}{\operatorname{Core}}
\newcommand{\Area}{\operatorname{Area}}
\newcommand{\Gr}{\operatorname{Gr}}
\newcommand{\td}{\tilde}
\newcommand{\til}{\tilde}
\newcommand{\on}{\operatorname}
\newcommand{\pt}{\partial}
\newcommand{\bd}{\partial}
\newcommand{\cc}{\circ}
\newcommand{\iv}{^{-1}}
\newcommand{\h}{\mathbb{H}}
\newcommand{\s}{\mathbb{S}}
\newcommand{\C}{\mathbb{C}}
\newcommand{\R}{\mathbb{R}}
\newcommand{\n}{\mathbb{N}}
\newcommand{\N}{\mathbb{N}}
\newcommand{\Z}{\mathbb{Z}}
\newcommand{\sm}{\setminus}
\newcommand{\minus}{\setminus}
\newcommand{\st}{\subset}
\newcommand{\sub}{\subset}
\newcommand{\dt}{\ldots}
\newcommand{\cn}{\colon}
\newcommand{\col}{\colon}
\newcommand{\ify}{\infty}
\newcommand{\If}{\infty}
\newcommand{\In}{\infty}
\newcommand{\po}{\pi_1}
\newcommand{\PSL}{\operatorname{PSL(2, \mathbb{C})}}
\newcommand{\psl}{\operatorname{PSL(2, \mathbb{C})}}
\newcommand{\pslr}{\operatorname{PSL(2, \mathbb{R})}}
\newcommand{\Hol}{\operatorname{Hol}}
\newcommand{\rs}{{\hat{\mathbb{C}}}}
\renewcommand{\AA}{\mathcal{A}}
\newcommand{\RR}{\mathcal{R}}
\newcommand{\LL}{\mathcal{L}}
\newcommand{\BB}{\mathcal{B}}
\newcommand{\FF}{\mathcal{F}}
\newcommand{\MM}{\mathcal{M}}
\newcommand{\mA}{\mathcal{A}}
\newcommand{\mP}{\mathcal{P}}
\newcommand{\mT}{\mathcal{T}}
\newcommand{\mF}{\mathcal{F}}
\newcommand{\mL}{\mathcal{L}}
\newcommand{\rt}{\mathtt{r}}
\newcommand{\dl}{\delta}
\newcommand{\del}{\delta}
\newcommand{\Del}{\Delta}
\newcommand{\kp}{\kappa}
\newcommand{\kap}{\kappa}
\newcommand{\Gm}{\Gamma}
\newcommand{\gm}{\gamma}
\newcommand{\gam}{\gamma}
\newcommand{\Ld}{\Lambda}
\newcommand{\ld}{\lambda}
\newcommand{\lam}{\lambda}
\newcommand{\ap}{\alpha}
\newcommand{\ep}{\epsilon}
\newcommand{\Ct}{\tilde{C}}
\newcommand{\St}{\tilde{S}}
\newcommand{\Lt}{\tilde{L}}
\newcommand{\lt}{\tilde{l}}
\newcommand{\Pt}{\tilde{P}}
\newcommand{\Mt}{\tilde{M}}
\newcommand{\Tt}{\tilde{T}}
\newcommand{\ldt}{\tilde{\lambda}}
\newcommand{\phit}{\tilde{\phi}}
\newcommand{\nut}{\tilde{\nu}}
\newcommand{\mut}{\tilde{\mu}}
\newcommand{\kpt}{\tilde{\kappa}}
\newcommand{\nin}{\noindent}
\newcommand{\ul}{\underline}
\date{\today}
\newcommand{\PP}{\mathscr{P}}
\newcommand{\PML}{\mathscr{PML}}
\newcommand{\ML}{\mathscr{ML}}
\newcommand{\gl}{\mathscr{GL}}
\newcommand{\GL}{\mathscr{GL}}
\newcommand{\TT}{\mathscr{T}}
\newcommand{\infi}{\infty}
\newcommand{\bdr}{\partial}
\begin{document}

 \title[\today]{2$\pi$-grafting and complex projective structures, I}

\author{Shinpei Baba} 

\address{MSRI \& Universit\"at Heidelberg 
}

\email{shinpei@mathi.uni-heidelberg.de}
\date{\today}

\maketitle


\begin{abstract}
Let $S$ be a closed oriented surface of genus at least two.
Gallo, Kapovich, and Marden  \cite{Gallo-Kapovich-Marden}  asked if   $2\pi$-grafting produces all projective structures on $S$ with arbitrarily fixed holonomy (Grafting Conjecture). 
In this paper,  we show that the conjecture  holds true  ``locally'' in the space $\GL$ of geodesic laminations on $S$ via  a natural projection of projective structures on $S$ into $\GL$   in  Thurston coordinates. 
In the sequel paper (\cite{Baba_10-2}),  using this local solution, we prove the conjecture for generic holonomy. 
\end{abstract}

\setcounter{tocdepth}{1}  
\tableofcontents

\section{Introduction}\Label{S:intro}
Let $F$ be a connected and oriented surface. 
A {\it (complex) projective} {\it structure} on $F$ is a $(\rs, \psl)$-structure, where $\rs$ is the Riemann sphere.
Equivalently, a projective structure on $F$ is a pair $(f, \rho)$ of 
\begin{itemize}
\item  an immersion $f\cn \td{F} \to \rs$ ({\it developing map)}, where $\td{F}$ is the universal cover of $F$, and 
\item a homomorphism $\rho\cn \po(F) \to \psl$ ({\it holonomy representation}) 
\end{itemize}
such that $f$ is $\rho$-equivariant, i.e. $f \cdot \gm = \rho(\gm) \cdot f$ for all $\gm \in \po(F)$; see for example \cite[\S 3.4]{Thurston-97}.  
The pair $(f, \rho)$ is defined up to an element $\ap$ of $\PSL$, i.e. $(f, \rho) \sim (\ap f,\, \ap\,\rho\,\ap^{-1})$.
(For general background about projective structures,  see \cite{Dumas-08, Kapovich-01}.)
Throughout this paper let $S$ be a closed oriented surface of genus $g > 1$. 

We aim to characterize  the set $\PP_\rho$ of all projective structures with fixed holonomy $\rho:\pi_1(S) \to \psl$. 
This basic question is  discussed in \cite[p 274]{Hubbard-81} \cite[\S 7.1, Problem 2]{Kapovich-95} \cite[Problem 12.1.1.]{ Gallo-Kapovich-Marden}  \cite[\S 1]{Dumas-08}; see also  \cite[\S 1.10]{Goldman-thesis}.
This aims for understanding of the geometry behind general representations $\pi_1(S) \to \PSL$, which are not necessarily discrete.

 Let $\PP$ be  the space of all (marked) projective structures on $S$, and let $\chi$ be the $\PSL$-character variety of $S$, i.e. the space of homomorphisms $\rho\cn \pi_1(S) \to \psl$, roughly, up to conjugation by an element of $\psl$\,; see \cite{Kapovich-01}. 
Then there is an obvious forgetful map $\Hol\cn \PP \to \chi$, called the {\it holonomy map}.
Clearly $\PP_\rho$ is a fiber of $\Hol$.
In addition  $\PP$ is diffeomorphic  to $\R^{2(6g-6)}$ and moreover it enjoys a natural complex structure (see \cite{Dumas-08}).
Then $\Hol$ is a local biholomorphism  (\cite{Hejhal-75, Hubbard-81, Earle-81}), and thus $\PP_\rho$ is a discrete subset of $\PP$.

There is a surgery operation of a projective structure, called  {\it (2$\pi$-)grafting},  that produces a different projective structure, preserving its holonomy representation (\S \ref{grafting}):
It inserts a cylinder along an appropriate essential loop ({\it admissible loop}) on a projective surface. 
Given $n \in \Z_{> 0}$, we can graft a projective surface $n$ times along the same admissible loop; we denote it by assigning  weight $2\pi n$ to the loop.

If there are disjoint admissible loops on a projective surface, we can simultaneously graft along all loops and obtain a new projective structure with the same holonomy. 
Similarly we use a multiloop with $2\pi$-multiple weights ({\it weighted multiloop}) to specify a general grafting along a multiloop.

For some special discrete representations $\pi_1(S) \to \PSL$,  graftings are known to produce all projective structures  in $\PP_\rho$ \,(\cite{Goldman-87, Ito-08, Baba12}; see also \S \ref{qf}).
Then, more generally, Gallo, Kapovich and Marden \cite[Problem 12.1.2]{Gallo-Kapovich-Marden} asked the following question.
 \begin{question}[Grafting Conjecture]\label{graftingconj}
Given two projective structures sharing (arbitrary) holonomy $\rho\cn \po(S) \to \psl$, is there a sequence of graftings and ungraftings that transforms one to the other?   
\end{question}

Holonomy representations are quite general. 
In fact, a homomorphism $\rho\cn \po(S) \to \psl$ is the holonomy representation of some projective structure (on $S$) if and only if $\rho$ satisfies:
\begin{itemize}
\item[(i)] $\Im (\rho)$ is  a nonelementary subgroup of $\psl$ and 
\item[(ii)] $\rho$ lifts to $\td{\rho}\cn \po(S) \to \on{SL}(2, \C)$
\end{itemize}
(\cite{Gallo-Kapovich-Marden}).
Recall that the character variety $\chi$ consists of two connected components (\cite{Goldman-88t}), one of which consists of the representations with the lifting property in (ii).
Thus  $\Hol$ is almost onto this component. 
In particular, holonomy representations are {\it not} necessarily discrete or faithful, and many holonomy representations have dense images in $\psl$ (c.f. \cite[Lemma 2.1]{Minsky13}).
Moreover if  $\rho\cn \po(S) \to \psl$ satisfies (i) and (ii) then $\PP_\rho$ contains infinitely many distinct projective structures, which can be constructed by grafting (implicitly in \cite{Gallo-Kapovich-Marden}; see also \cite{Baba-10}).

\subsection{Projective structures with fuchsian holonomy}\Label{qf}
We recall the characterization of $\PP_\rho$ when $\rho\cn \po(S) \to \psl$ is  a discrete and faithful representation into $\pslr$, called a {\it fuchsian (holonomy) representation}.
Then  $\Im (\rho) =: \Gm$ is called a {\it fuchsian group}, and its domain of discontinuity is a union of two disjoint round disks in $\rs$. 
Then, by quotienting out the domain by $\Gm$, we obtain two distinct projective structures with fuchsian holonomy $\rho$  ({\it uniformizable projective structures}), which  have different orientations. 
 Let  $C_0$ denote the one of our fixed orientation. 
 Then $C_0$ is isomorphic to the hyperbolic surface $\h^2/\Im(\rho)$ as projective surfaces and every essential loop on $C_0$ is admissible.

\begin{theorem}[Goldman \cite{Goldman-87}; also \cite{Kapovich-88}]\Label{10-30-1}
If $C \in \PP_\rho$, then  
$C$ is obtained by grafting $C_0$ along a weighted multiloop $M$,
$$C = \Gr_M(C_0).$$
\end{theorem}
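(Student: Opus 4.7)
The plan is to analyse the preimage, under the developing map of $C$, of the limit circle of the Fuchsian holonomy, and extract from it the grafting multiloop directly. After conjugation, arrange $\Gm := \on{Im}(\rho) \subset \pslr$, so that its limit set is the round circle $\hat{\R} \subset \rs$ bounding the two hemispheres $\h^\pm$. Let $f \colon \td{S} \to \rs$ be the developing map of $C$ and set $\Sigma := f\iv(\hat{\R})$. Because $f$ is a local diffeomorphism and $\hat{\R}$ is smoothly embedded, $\Sigma$ is a smooth properly embedded $1$-submanifold of $\td{S}$; $\rho$-equivariance and $\Gm$-invariance of $\hat{\R}$ make $\Sigma$ equivariant, so it descends to a closed $1$-submanifold $M \subset S$.

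I would next show that every component of $\Sigma$ is a line stabilised by an infinite cyclic hyperbolic subgroup of $\po(S)$, and thus projects to a simple closed geodesic of $C_0$. For a component $\sigma$, the stabiliser $\on{Stab}(\sigma) \subset \po(S)$ must preserve $f(\sigma) \subset \hat{\R}$; by discreteness and faithfulness of $\rho$, this stabiliser is either cyclic hyperbolic or trivial, and the compactness of $S$ (together with properness of $\Sigma$) rules out the trivial case. Thus the descended set $M$ is a multiloop on $S$.

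Next I would exploit the bipartite structure. Each component $\Omega$ of $\td{S} \setminus \Sigma$ maps under $f$ into exactly one of $\h^\pm$; colour $\Omega$ white or black accordingly, and observe that adjacent components across a leaf of $\Sigma$ have opposite colours. On each white component, $f$ is a diffeomorphism onto an open subset of $\h^+$. Collapsing each black component (a round ``$2\pi$-annulus'') to its core leaf and regluing the remaining white components along these cores recovers the universal cover of $C_0$; this is precisely the inverse of grafting along $M$. The weight along each loop $m \subset M$ is the positive integer counting the number of black components stacked between two adjacent white fundamental regions, and integrality is forced because the holonomy of $C$ around $m$ equals the Fuchsian holonomy of $m$ in $\Gm$, while each inserted black annulus contributes exactly a $2\pi$ rotation.

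The most delicate point is the assembly: one has to verify that the white components reorganise in a $\po(S)$-equivariant way into a single copy of the universal cover of $C_0$, and that no component of $\Sigma$ accumulates pathologically in $\td{S}$ or has wild stabiliser behaviour. Both points rely essentially on the Fuchsian nature of $\rho$: the limit set is a round circle with two round complementary disks, which makes the bipartite colouring and the subsequent grafting decomposition canonical. For general representations the preimage of the limit set can be much worse-behaved, which is precisely why the main theorems of the present paper require new techniques based on Thurston's coordinates rather than this direct approach.
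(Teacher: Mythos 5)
Your strategy --- pull back the limit circle of the Fuchsian group under the developing map, decompose $\td{S}$ by the two complementary disks, and reassemble --- is indeed the skeleton of Goldman's original argument, and you are right that the Fuchsian hypothesis is what makes the bipartite picture available. But the sketch as written quietly asserts the three lemmas that \emph{are} Goldman's proof, and at least one of them is stated in a form that is false without further work.

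First, the claim that $\Sigma = f\iv(\hat{\R})$ consists of properly embedded lines with no compact component is not automatic from $f$ being a local diffeomorphism; a compact component would bound a disk in $\td{S}$, and ruling this out requires an argument (Goldman uses a maximum-principle/degree argument, exploiting that $f$ is an open map into $\rs$ and that the two sides of $\hat{\R}$ are disks). Second, and more seriously, the assertion that ``on each white component, $f$ is a diffeomorphism onto an open subset of $\h^{+}$'' does not follow from $f$ being a local diffeomorphism: a simply connected region can immerse into a disk without embedding, and whether the developing map restricted to a single component of $f\iv(\h^{+})$ is injective is precisely one of the delicate structural lemmas, not a freebie. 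Third, your black components are components of $\td{S} \setminus \Sigma$ and hence simply connected; they are not annuli. The annulus structure (and the integer multiple of $2\pi$) only appears after one has shown that the stabilizer of such a component in $\po(S)$ is infinite cyclic hyperbolic and taken the quotient; conflating the picture in $\td{S}$ with the picture in $S$ hides exactly where the $2\pi$-integrality comes from. Your stabilizer argument for components of $\Sigma$ has the same issue: ruling out trivial stabilizer does not follow merely from compactness of $S$, since the covering projection of a properly embedded line with trivial stabilizer can fail to be proper in $S$ without contradiction unless you already know more about the geometry of $\Sigma$.

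In short, the decomposition you set up is the right one and the ``most delicate point'' you flag at the end is genuinely where the theorem lives, but you have also flagged as routine two earlier steps (properness and line-ness of $\Sigma$; injectivity of $f$ on positive components) that in Goldman's paper are substantial lemmas in their own right. A complete proof along these lines needs the maximum-principle argument for $\Sigma$, the cocompactness of $\Gm$ (to exclude parabolic stabilizers and force the cyclic hyperbolic case), the embedding lemma for positive components, and the equivariant reassembly of those components into a single $\rho$-equivariant copy of $\h^{2}$.
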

In Theorem  \ref{10-30-1},  $M$ is unique up to an isotopy, and the same assertion holds  moreover  for {\it quasifuchsian representations} (although the proof is easily reduced to a fuchsian case  by a quasiconformal map).
Let $\ML$ be the space of measured laminations on $S$.
Then $\PP_\rho$ is naturally identified with the discrete subset $\ML_\N$ of  $\ML$ that consists of weighted multiloops.

Let $C$ and $C'$ be the projective structures sharing the fuchsian holonomy $\rho$.
Then $C = Gr_M(C_0)$ and $C' = \Gr_{M'}(C_0)$ for unique weighted multiloops $M$ and $M'$ on $C_0$ by Theorem \ref{10-30-1}.
Then it follows from Ito's work (\cite[Theorem 1.3]{Ito07}) that:
\begin{theorem}\Label{ito}
 $C$ and $C'$ can be transformed to a common projective structure in $\PP_\rho$ by grafting $C$ along $M'$ and $C'$ along $M$,
 $$\Gr_{M'}(C) = \Gr_{M}(C')$$
(see also \cite{Calsamiglia4DeroinFrancaviglia14}).
\end{theorem}

\subsection{Thurston coordinates}(More details in \S \ref{thurston}.)\Label{sThurston}
In a geometric manner, Thurston gave a natural homeomorphism 
$$\PP \to \TT \times \ML,$$
where $\TT$ is the space of marked hyperbolic structures $S$ (Teichm\"uller space). 
Thus, given  $C \in \PP$ , we denote {\it its Thurston coordinates} by $C \cong (\tau, L)$ with  $\tau \in \TT$ and $L \in \ML$. 

For example, suppose that a projective structure  $C  \cong (\tau, L)$ has fuchsian holonomy $\rho\cn \po(S) \to \pslr$.
Then $\tau$ is   $\h^2 /\Im(\rho)$, and $L$ is  the weighted multiloop $M$ given by Theorem \ref{10-30-1},  so that $C = \Gr_L(\tau)$; see \cite{Goldman-87}. 

Without restricting holonomy, let $C \cong (\tau, L) \in \PP$.  
Suppose that there is a weighted multiloop $M$ on $\tau$ such that each loop of  $M$ does not intersect $L$ transversally (for example, $M$ is supported on some closed leaves of $L$).
Note that, by the transversality, the addition $M + L$ is a well-defined measured lamination. 
Then there is a corresponding circular admissible (weighted) multiloop $\MM$ on $C$ that is equal to $M$ in $\ML$, such that $\Gr_\MM(C) \cong (\tau, L + M)$.
Then we may simply write $\Gr_M(C)$ to denote $\Gr_\MM(C)$, abusing notation.

Given a projective structure $C  \cong (\tau, L)$ in Thurston coordinates,  
there is a natural marking preserving map $\kap\col C \to \tau$, called the {collapsing map}, which is a diffeomorphism except on the inverseimage of the closed leaves of $L$. 
Nonetheless, there is a natural measured lamination $\LL$ on $\tau$ such that leaves of $\LL$ are circular and $\LL$ descends to $L$ by $\kap$. 
Thus $\LL$ is a natural representative of $L$ on $C$.  
(See \S \ref{thurston}.)

\subsection{Traintracks and measured laminations}
(See \S \ref{S:traintracks} for details.)
A {\it (fat) traintrack} $T$ on  a surface is a subsurface that is a union of rectangles $(\textit{branches})$ with disjoint interiors glued along vertical edges in a certain manner. 
We say that a traintrack {\it carries} a measured lamination if  $T$ contains a measured lamination in a natural manner without ``backtracks".
 Then  the transversal measure assigns each branch of $T$ a non-negative real number (\textit{weight}). 

When a single traintrack $T$ carries two measured laminations $L$ and $M$, the {\it sum} $L + M$  is defined to be a measured lamination carried by $T$ that is given by adding the weights of $L$ and $M$  branch-wise. 
Similarly, when appropriate, we obtain the {\it difference} $L - M$ that is a measured lamination carried by $T$ represented by the differences on the weights of $L$ and $M$.

\subsubsection{Existence of admissible traintracks.}

Given an admissible loop $\ell$ on a projective surface $C$,  an isotopy of $\ell$ on $C$ does not necessarily keep $\ell$ admissible. 
On the other hand, given a loop $\ell$ on $C$ whose holonomy is loxodromic, in general it is hard to tell if $\ell$ can be isotoped an admissible loop. 
Thus, we introduce {\it admissible traintracks} in order to specify admissible loops (Definition \ref{10-2-1}), still allowing a ``uniform amount'' of  isotopies.  \note{english still}
If a traintrack $\mathcal{T}$ on $C$ is admissible, then it is foliated by circular arcs parallel to vertical edges (\S \ref{10-5-1}). 
Indeed, if a loop $\ell$ is carried by $\mathcal{T}$ and it is transversal to this {\it circular foliation},  then $\ell$ is admissible (Lemma \ref{admissible}).
Note that we do {\it not} need to isotope $\ell$ to make it admissible, and in addition  $\ell$ stays admissible under an isotopy through such transversal loops carried by $\mathcal{T}$.
 Moreover, such an isotopy preserves the resulting projective structure $\Gr_\ell(C)$. 
In fact

\vspace{3mm}
\nin{\bf Corollary \ref{AdmissibleTraintrack}.}
Given $C \cong (\tau, L) \in \PP$ and a geodesic lamination $\nu$ on $\tau$ containing the underlying lamination $|L|$, there is an admissible traintrack on $C$ fully carrying $\nu$.
\vspace{3mm}

Suppose that there is an admissible traintrack $\mathcal{T}$ on $C \in \PP$, and let $\ell$ be a loop carried by $\mathcal{T}$ transversal to the circular foliation so that $\ell$ is admissible.  
Then the grafting of $C$ along $\ell$ restricts to the grafting of $\mathcal{T}$ along $\ell$. 
Then $\Gr_\ell(\mathcal{T})$ is naturally an admissible traintrack on $\Gr_\ell(C)$. 
In this paper, in oder to compare different projective structures sharing holonomy, we construct admissible traintracks on them that are related by grafting. 

In \cite{Baba-10}, given  arbitrary $C \cong (\tau, L) \in \PP$, the author constructed an admissible loop $\ell$ on $C$, so that $\ell$  is a good approximation of  a minimal sublamination of $L$ in the Chabauty topology on the space $\GL$ of geodesic laminations  (using the ``Closing Lemma" in \cite[I.4.2.15]{CanaryEpsteinGreen84}). 
Corollary \ref{AdmissibleTraintrack} a provides more general way of constructing admissible loops.
In particular, if a loop on $\tau$ is close to $L$ in  $\ML$ or if it intersects leaves of  $L$ only at uniformly small angles  (see Definition \ref{9-12-12no1}),  then there is an admissible loop on $C$ in the same isotopy class.

\subsection{Local characterization of $\PP_\rho$ in $\PML$}\label{ml}

Let $\PML$ be the space of projective measured laminations on $S$. 
Note that $\PML$ is homeomorphic to the sphere of dimension $6g-7$.
We show that, if two projective structures in $\PP_\rho$ are close in $\PML$ in Thurston coordinates, then they are related by a single grafting along a weighted multiloop:

\vspace{3mm}
\nin{\bf Theorem A.} (see Theorem \ref{ThmA}.)

{\it
Let $C \cong (\tau, L)$ be a projective structure on $S$ with (arbitrary) holonomy $\rho$. 
Then for every $\ep >0$, there is a neighborhood $U$ of the projective class $[L]$ in $\PML$ such that   
 if another projective structure $C' \cong (\tau', L')$  with holonomy $\rho$ satisfies $[L'] \in U$, then we have either
\begin{enumerate}
\item[(i)] $[L] = [L']$,  and $L - L'$ is a weighted multiloop $M'$ such that $C = Gr_{M'}(C'),$
 or 
\item[(ii)]  there are  an admissible traintrack $\TT$ on $C$  carrying both $L$ and $L'$ and a weighted multiloop $M$ carried by $\TT$, such that $M$ is $\ep$-close to $L' - L$  on $\TT$ (\S \ref{S:traintracks}) and  
 $$Gr_M(C) = C'.$$
\end{enumerate}
\vspace{3mm}
}

\begin{remark}

In {\rm (ii)},  by ``$\ep$-close'',  we mean, roughly, that $M$ is a good approximation of $L - L'$ for sufficiently small  $\ep > 0$ (see \S \ref{S:traintracks}). 

Case {\rm (i)} may happen only when $L$ and $L'$ are both multiloops.
Since generic measured laminations are not  multiloops,  for  generic $C \in \PP$ , only {\rm (ii)} occurs.

  In the case of {\rm (i)}, the weight of $L$ is larger than that of $L'$ on every branch; whereas, in the case of {\rm (ii)}, the weight is smaller on every branch.  \note{omission.}
This dichotomy is due to the discreteness of $\PP_\rho$ in $\PP$ and the smallness of $U$. 

\end{remark}

Let  $\rho\cn \po(S) \to \psl$ be a fuchsian representation.
Let $C$ be  $\h^2 / \Im \rho =: \tau$, the uniformizable structure  with holonomy $\rho$ as in \S \ref{qf}. 
Then $C \cong (\tau, \emptyset)$, where $\emptyset$ is the empty lamination.
Then, for every $C'  \cong (\tau, M)$ with  holonomy $\rho$, we have $C' = \Gr_M(C)$ by  Theorem \ref{10-30-1}.
Thus theorem A  (i) holds true with $U = \PML$ and $M = M - \emptyset$. 
Hence Theorem A generalizes Theorem \ref{10-30-1}.

\subsection{Local characterization of $\PP_\rho$ in $\GL$}\label{120612}

Let $\GL$ be the set of geodesic laminations on $S$. 
Naturally $\ML$ projects to $\GL$ by forgetting transversal measures. 
Theorem B  below gives a local characterization $\PP_\rho$ in  $\GL$ analogous to Theorem A.
Note that geodesic laminations are more essential to pleated surfaces than measured laminations are. 
Indeed, Theorem B is essentially stronger than Theorem A, and in particular it generalizes not only Theorem \ref{10-30-1} but also Theorem \ref{ito}.

 \begin{definition}\Label{9-12-12no1}
If $\ell$ and $\ell'$  are simple geodesics on a hyperbolic surface $\tau$ intersecting at a point $p$, we let $\angle_p (\ell, \ell')$ denote the angle, taking a value in  $[0, \pi/2]$,  between $\ell$ and $\ell'$ at $p$.
 Let $\ld$ and $\ld'$ be (possibly measured) geodesic laminations on $\tau$.
 Then the \underline{angle} between $\ld$ and $\ld'$ is 
 $$sup_p\, \angle_p(\ell_p, \ell_p')~ \in [0, \pi/2],$$ taken over all points $p$ in the intersection of  $\ld$ and $\ld'$, where $\ell_p$ and $\ell_p'$ are the leaves of $\ld$ and $\ld'$, respectively, intersecting at $p$.
We denote this angle by $\angle_\tau(\ld, \ld')$ or simply $\angle(\lam, \lam')$. 
\end{definition}

In Definition \ref{9-12-12no1}, if $\lam$ or $\lam'$ is a geodesic lamination on a different hyperbolic surface homeomorphic to $\tau$, then we always take its geodesic representative on $\tau$ in order to measure the angle $\angle_\tau(\ld, \ld')$.

Let $C \cong (\tau, L)$ be a projective structure on $S$ with holonomy \linebreak[4]$\rho\cn \po(S) \to \psl$.
Then  $L$ determines whether $C$ is obtained by grafting another projective structure in an obvious way as described in \S \ref{sThurston}.
If $L$ contains a closed leaf $\ell$ and its weight  $w(\ell)$ is equal to or more than $2\pi$, then change the weight of $\ell$ by subtracting a  $2\pi$-multiple so that $0 \leq w(\ell) < 2\pi$.
By applying this weight reduction to all closed leaves of $L$, we obtain a measured lamination $L_0$ such that every closed leaf of  $L_0$ has weight less than $2\pi$. 
Let $M= L - L_0$, so that  $M$ is a weighted multiloop. 

Throughout this paper, let $C_0$ denote the projective structure given by $(\tau, L_0)$ in Thurston coordinates, with $L_0$ as above.
Then $C = \Gr_M(C_0)$, and the holonomy of  $C_0$ is also $\rho$.
Note that, since generic $L \in \ML$ contains  no closed loops,  for generic $C \in \PP$ in Thurston coordinates, we have $C = C_0$ and $M = \emptyset$.

Then analogues of Theorem \ref{10-30-1} and Theorem \ref{ito} hold for projective structures in $\PP_\rho$ whose geodesic laminations are close to $L$ in terms of the angle defined above:

\vspace{3mm}
{\it
\nin{\bf Theorem B.} (See Theorem \ref{8-28-12}.)
For every $\ep > 0$ and every projective structure $C \cong (\tau, L)$ on $S$ with holonomy $\rho$, there exists $\dl > 0$, such that, 
if  another projective structure $C' \cong (\tau', L')$ with holonomy $\rho$ satisfies $\angle_\tau ( L, L' ) < \del$, then there are admissible traintracks on $\TT_0$, $\TT$, and $\TT'$ on $C_0$, $C$, $C'$, respectively,  that are isotopic on $S$ and carry both $L$ and $L'$ (thus also $L_0$), so that

\begin{itemize}
\item[(i)] $C'$ is obtained by grafting $C_0$ along a weighted multiloop $M'$ carried by $\TT_0$, such that  $M'$ is $\ep$-close to the measured lamination given by $L' - L_0$ on $\TT_0$, and
 \item[(ii)]  
if weighted multiloops $\hat{M}$ and $\hat{M}'$ are carried by  $\TT$ and $\TT'$, respectively, and $\hat{M} + M = \hat{M}' + M'$ on the traintracks, then we have
  $$\Gr_{\hat{M}} (C) = \Gr_{\hat{M}'}(C').$$
\end{itemize} 
}

(See Figure 1.)
In (ii), there are infinitely many choices for $\hat{M}$ and $\hat{M'}$ satisfying the equality; in particular   we can let $\hat{M} = M'$ and $\hat{M}' = M$.

\vspace{3mm}
\begin{figure}[h]
\centerline{
\xymatrix{ 
& \hat{C} &\\
C \ar[ru]^{\Gr_{\hat{M}}}& & \ar[lu]_{Gr_{\hat{M}'}} C'\\
&\ar[lu]^{\Gr_{M}} C_0 \ar[ru]^{\Gr_{M'}}&
}
}\caption{}
\end{figure}\Label{fGrafting}

Moreover, given a compact subset $K$ in the moduli space of (unmarked) hyperbolic structures on $S$,  
there is $\del > 0$ such that Theorem B holds for all projective structures $C \cong (\tau, L)$ on $S$ with its unmarked $\tau$ in $K$;  see Theorem \ref{062213}.
In addition Theorem B (i)  implies that, if $L$ contains no closed leaves of weight at least $2\pi$, then $C'$ is obtained by grafting $C$ along a multiloop. 

In the case that $\rho\cn \pi_1(S) \to \pslr$ is fuchsian, Theorem B (i) corresponds to Theorem  \ref{10-30-1}: 
Let  $C \cong (\tau, \emptyset)$ denote the unique uniformizable structure in $\PP_\rho$. 
Then indeed $\angle_\tau (\emptyset, L' ) = 0 $ for every $L' \in \ML$.
For generic $C'  \in \PP_\rho$, we have $C_0 = C$,  and $C'$ is obtained by grafting $C$ along the weighted multiloop $L' - \emptyset$. 
Moreover Theorem B (ii) implies Theorem \ref{ito} (see Theorem \ref{cor:ito}).

\subsection{Pleated surfaces}
Consider (abstract) pleated surfaces equivariant via a  fixed representation $\rho \col \pi_1(S) \to \PSL$.
We show some continuity of $\rho$-equivariant pleated surfaces in terms of their pleating laminations.
(In comparison, \cite{Bonahon98, KeenSeries95} yield continuity of the pleated surfaces bounding the convex cores of  hyperbolic three-manifolds, when associated discrete representations vary.) 

The correspondence between a projective structure $C = (f, \rho)$ and its Thurston coordinates $(\tau, L)$ is given via a pleated surface $\beta\cn \h^2 \to \h^3$ (\S \ref{6.18.13}, \S\ref{thurston}).
In particular $\beta$ is equivariant under the holonomy representation of $C$, and it ``realizes'' $(\tau, |L|)$, where $|L|$ is the underlying geodesic lamination of $L$.
A pair of  $\tau \in \TT$ and $\lam \in \GL$ is {\it realized} by a pleated surface $\beta\col \h^2 \to \h^3$, if $\beta$ bend $\h^2$ (in $\h^3$) exactly along the {\it total lift}  $\til{\lam}$ of $\lam$ to $\h^2$ (\S \ref{sConvention}) and it is totally geodesic elsewhere (this is slightly stronger than a usual notion of a realization); see  \S  \ref{sBending}.

Then,  in fact, the assumptions in Theorem A and B can be interpreted in term of pleated surfaces, by the following theorem. 

\vspace{3mm}
\nin{\bf Theorem C} (See Theorem \ref{12-19} for the precise statement.)
{\it Let $\rho\cn \po(S) \to \psl$ be a homomorphism.
Suppose that there is a $\rho$-equivariant pleated surface $\beta\cn \h^2 \to \h^3$ realizing $(\tau, \lambda) \in \TT \times \GL$.

For every $\ep > 0$, there exists $\dl > 0$ such that, if there is another $\rho$-equivariant pleated surface $\beta'\cn \h^2 \to \h^3$ realizing $(\sigma, \nu)  \in \TT \times \GL$ with $\angle_{\tau}(\lambda, \nu) < \dl$,
then $\sigma$ is $\ep$-close to $\tau$ in $\TT$ and $\beta'$ and $\beta$ are $\ep$-close.}
\vspace{3mm}

If we apply Theorem C to pleated surfaces associated with projective structures, Theorem A and B may seem natural.
For example, in Theorem B, if $\del > 0$ is sufficiently  small, then $\tau'$ must be very close to  $\tau$ in $\TT$ by Theorem C.
Thus the differences of the projective structures $C, C_0, C'$ are captured, mostly, by the differences of the measured laminations in Thurston coordinates.

\vspace{3mm} 
\nin{\it Acknowledgements.}
I am grateful to William Goldman,  Misha Kapovich, and Subhojoy Gupta.
I thank Ken Bromberg for telling me about Ito's paper. 
I also thank the referee of the paper. 

I acknowledge support from the GEAR Network (U.S. National Science Foundation grants DMS 1107452, 1107263, 1107367), and the European Research Council (ERC-Consolidator grant no. 614733).

\subsection{Outline of the proofs}
{\it Theorem C. } (\S \ref{stability}.)
The closeness of $\tau$ and $\tau'$ and of $\beta$ and $\beta'$ is given by constructing a marking-preserving homeomorphism $\phi\col \tau \to \sigma$ that is almost an isometry (more precisely, it is a rough isometry with small distortion). 
 We here outline the construction of $\phi$.   
First we define a homeomorphism $\psi$ from the geodesic representative  $\nu_\tau$ of $\nu$ on $\tau$  onto the geodesic lamination $\nu$ on $\sigma$ so that it induces bilipschitz maps of small distortion between corresponding leaves: 
If $\ell$ and $\ell'$ are corresponding leaves of the total lifts $\til{\nu}_\tau$ and $\til{\nu}$ to $\h^2$, then $\beta'|\ell'$ is  a geodesic in $\h^3$ and, 
since $\angle_\tau(\lambda, \nu)$ is sufficiently small, $\beta|\ell$ is a  bilipschitz embedding of small distortion (by Proposition \ref{BilipschitzGeodesics}). 
Then $\beta(\ell)$ and $\beta'(\ell')$ are Hausdorff-close and they share their endpoints on $\rs$.
The nearest point projection of  $\beta(\ell)$ onto the geodesic $\beta'(\ell')$ yields a desired bilipschitz map $\ell \to \ell'$.  
By applying this to all corresponding leaves of $\til{\nu}_\tau$ and $\til{\nu}$, we obtain $\psi\cn \nu_\tau \to \nu$.

Extend $\nu_\tau$ and $\nu$ to maximal laminations on $\tau$ and $\sigma$ that are isomorphic (as topological laminations), so that they divides $\tau$ and $\sigma$ into ideal triangles. 
Then we extend $\psi\cn \nu_\tau \to \nu$ to $\phi\col \tau \to \sigma$, so that  $\psi$ is  a quasiisometry of small distortion  between all corresponding complementary ideal triangles. 
It turns out that $\psi$ almost preserves  horocycle laminations of the triangulation, and therefore $\psi$ has almost no ``shearing'' between nearby ideal triangles. 
Thus $\psi\col \tau \to \sigma$ is  almost an isometry.

{\it Theorem A.}(The proof of Theorem B is similar)
If $\angle_\tau(L, L') > 0 $ is sufficiently small, then we can apply theorem C to the pleated surfaces corresponding to $C$ and $C'$; then we can naturally identify  $\tau$ and $\tau'$  by an almost isometric homeomorphism preserving the marking. 

There is a nearly-straight traintrack $T$  on $\tau$ carrying $L$ (Lemma \ref{8-2}).
By the almost isometry between $\tau$ and $\tau'$,  we can regard $T$ also as a nearly-straight traintrack on $\tau'$ carrying $L'$.
Then $T$ yields corresponding traintracks $\mathcal{T}$ on $C$ and $\mathcal{T}'$ on $C'$ that descend to $T$ via the  collapsing maps $C \to \tau$ and $C' \to \tau'$.
Moreover $\mathcal{T}$ and $\mathcal{T}'$ decompose $C$ and $C'$, isomorphic, into subsurfaces in a compatible manner  (\S \ref{sec:decomposition}). 
In particular, $C \minus  \mathcal{T}$ and $C' \minus \mathcal{T}'$ are isomorphic (as projective surfaces). 
In addition, if $\BB$ and $\BB'$ are corresponding branches of $\mathcal{T}$ and $\mathcal{T}'$, respectively, then they are related by a grafting along a multiarc.
Then the multiloop for each grafting in Theorem A  is obtained as the union of such multiarcs (see \S \ref{s:charactrization}, c.f. \cite{Baba12}). 

Moreover the number of the arc times $2\pi$ is approximately the difference of the weights of $L$ and $L'$ on the branch of $T$ corresponding to $\BB$ and $\BB'$.
Accordingly $M$ is approximately the difference of $L$ and $L'$  on the traintrack $T$. 

 \section{Conventions and notation}\Label{sConvention}
\note{order}
 \begin{itemize} 
 \item   Given a projective structure $C \cong (\tau, L)$ on $S$ in Thurston coordinates,  we let  $C_0$  denote the ``reduced'' structure $(\tau, L_0)$ constructed in \S \ref{120612} .
\item By a {\it component}, we mean a {\it connected} component.   
\item  For a geodesic  metric space $X$  and points $x, y \in X$,  we denote the geodesic segment connecting $x$ to $y$   by $[x, y]$. Then $\length[x, y]$ denotes the length of $[x, y]$.
\item Let $X$ be a manifold, and let $Y$ be a subset of $X$. 
Given a covering map $\phi\cn \td{X} \to X$,   the {\it total lift} of $Y$ is the inverse image $\phi\iv(Y)$. 
\item We say two submanifolds intersect (at a point) {\it $\ep$-nearly  orthogonally} for $\ep > 0$, if the intersection angle is $\ep$-close to $\pi/2$.
\item By a {\it loop}, we mean a simple closed curve.
\item By a {\it marking homeomorphism}, we mean a homeomorphism that represents a given marking on a geometric structure. 
 \end{itemize}


\section{Preliminaries}

\subsection{Geodesic laminations and pleated surfaces}(See \cite{CanaryEpsteinGreen84, Casson-Bleiler-88} for details)\Label{6.18.13} 
A {\it geodesic lamination} $\lam$ on a hyperbolic surface $\tau$ is a set of disjoint simple geodesics whose union is a closed subset of $\tau$. 
The simple geodesics of $\lam$ are called {\it leaves}. %
Let $|\lam|$ denote the closed subset. 
Occasionally, $\lam$ may refer to the closed subset $|\lam|$, when it is clear from the context.
 A geodesic lamination $\lam$ is {\it minimal} if there is no non-empty sublamination of $\lam$.

A {\it measured (geodesic) lamination} $L$ on $\tau$ is a pair $(\lam, \mu)$ of a geodesic lamination $\lam$ and a transversal measure $\mu$ of $\lam$. 
If $L$ is non-empty, by identifying $\mu$ with its scalar multiples by positive real numbers, we obtain a projective measured lamination $[L]$ of $L$.

Given a geodesic lamination $\lam$ on a hyperbolic surface $\tau$, a {\it stratum} is a leaf of $\lam$ or the closure of a (connected) component of $\tau \minus |\lam|$.
A {\it pleated surface} $\beta \cn \h^2 \to \h^3$ {\it realizing} a geodesic lamination $\lam$ on $\h^2$ is a continuous map such that  $\beta$ preserves the lengths of paths  on $\h^2$ and it isometrically embeds each stratum of $(\h^2, \lam)$ into a  (totally geodesic) hyperbolic plane in $\h^3$. 
More generally,  if $\lam$ is a geodesic lamination on a hyperbolic surface $\tau$, then a pleated surface $\h^2 \to \h^3$ {\it realizes} $(\tau, \lam)$ if it realizes the total lift of $\lam$ to $\h^2$.
If $\beta$ realizes $(\tau, \lam)$, then, unless otherwise stated, we in addition assume that there is no proper sublamination of $\lam$ that $\beta$ realizes, so that $\beta$ ``exactly'' realizes $(\tau, \lam)$.

\subsection{Grafting}\Label{grafting}(see \cite{Goldman-87, Kapovich-01}.)
Let $C = (f, \rho)$ be a projective structure on $S$. 
A loop $\ell$ on $C$ is called {\it admissible} if 
\begin{itemize}
\item[(i)] $\rho(\ell) \in \psl$ is loxodromic, and
\item[(ii)] $f$ embeds $\td{\ell}$ into $\rs$, where  $\td{\ell}$ is a lift of $\ell$ to the universal cover of $S$.
\end{itemize} 

If $\ell$ is admissible,  the loxodromic element $\rho(\ell)$ generates an infinite cyclic group $Z$ in $\psl$.
Then its limit set $\Ld(Z)$ is the union of  the attracting and repelling fixed points of $\rho(\ell)$ (on $\rs$), and  $Z$ acts on its complement $\rs \sm \Lambda(Z)$ freely and properly discontinuously.
Thus the quotient $(\rs \sm \Ld(Z)) / Z$ is a projective torus $T_\ell$ ({\it Hopf torus}). 
Then, by (ii), $\ell$ is isomorphically embedded in $T_\ell$. 
Since $\ell$ is also a loop on $C$, there is a canonical way to combine the projective surfaces $C$ and $T_\ell$ by cutting and pasting along $\ell$ as follows. 
We see that $T_\ell \sm \ell$ is a cylinder and $C \sm \ell$ is a surface with two boundary components.
Thus we obtain  a  new projective structure on $S$ by pairing up the boundary components of $T_\ell \sm \ell$ and $C \sm \ell$  in an alternating manner and isomorphically identifying them.
This surgery operation is called {\it $(2\pi$-)grafting} of $C$ along $\ell$, and we denote the new projective structure by $\operatorname{Gr}_\ell (C)$.
It turns out that $\rho$ is holonomy representation $\Gr_\ell(C)$.

\subsection{Thurston coordinates}\Label{thurston}
(see \cite{Kamishima-Tan-92, Kullkani-Pinkall-94} and also \cite{Dumas-08, Tanigawa-97, Baba-10}.)

We here explain more about the parametrization  
\begin{equation}
\PP \cong \TT \times \ML. \label{11-15-1}
\end{equation}
discussed in \S \ref{sThurston}. 

For example, suppose that a projective structure $C \in \PP$ is isomorphic, as a projective surface, to an ideal boundary component of  a hyperbolic three-manifold.
Then the Thurston coordinates of $C$ are the structure on the corresponding boundary component of the convex core of the three-manifold: a hyperbolic surface bent along a measured lamination.

\subsubsection{Bending maps}\Label{sBending}
Let $(\tau, L) \in \TT \times \ML$, and regard the measured lamination $L$ as a geodesic measured lamination on the hyperbolic surface $\tau$.
Set $L = (\ld,\mu)$, where $\ld \in \gl(S)$ and $\mu$ is a transversal measure supported on $\ld$. 
Let $\Lt = (\ldt, \mut) \in \ML(\h^2)$ be the total lift of $L$ to $\h^2$.
Then there is a corresponding pleated surface $\h^2 \to \h^3$ obtained by bending a hyperbolic plane inside $\h^3$ along $\ldt$ by the angles given by $\mut$.
This map  is called the {\it bending map} $\beta\cn \h^2 \to \h^3$ {\it induced by} $(\tau, L)$.
Then $\beta$ is unique up to a postcomposition with an element of $\psl$.
If $C = (f, \rho) \in \PP$ corresponds to $(\tau, L) \in \TT \times \ML$ by (\ref{11-15-1}), then we say that $\beta$ is the bending map {\it associated} with $C$.
Since the $\po(S)$-action on $\h^2$ preserves $\Lt$,  
there is a homomorphism $\po(S) \to \psl$ under which $\beta$ is equivariant.
Then this homomorphism is unique up to a conjugation by an element of $\psl$, and it indeed is the holonomy representation $\rho$ of  $C$.

On the other hand,  given a measured lamination $L$ on $\h^2$,   this pair $(\h^2, L)$ gives a projective structure on an open disk.

\subsubsection{Maximal balls and collapsing maps}\Label{11-18-4}
 (See \cite[\S4]{Kullkani-Pinkall-94},\cite[\S 1.1]{Kamishima-Tan-92}.)

Let $C \in \PP$.
Let $\Ct$ be the universal cover of $C = (f, \rho)$.
An open topological ball $B$ in $\Ct$ is called a {\it maximal ball} if  the developing  map $f\cn \Ct \to \rs$ embeds $B$ onto a round open ball in $\rs$ and there is no such a ball in $\Ct$ properly containing $B$. 
Let $B$ be a maximal ball, and let $H$ be the hyperplane in $\h^3$ bounded by the round circle $\pt f(B)$.
Then, recalling $\rs$ is naturally the ideal boundary of  $\h^3$,  let $\Phi\cn f(B) \to H$ be the canonical conformal map obtained from the nearest point projection onto $H$. 
Let  $\pt_\If B$ be $\bd f(B) \minus f(cl(B))$, where $``cl"$ denotes the closure on $\til{C}$.

Suppose that  $(\tau, L) \in \TT \times \ML$ corresponds to $C = (f, \rho)$ in (\ref{11-15-1}).
Let $\beta\cn \h^2 \to \h^3$ be the bending map induced by $(\tau, L)$.  
Then the maximal $B$  corresponds to a {\it stratum} $X$ of $\td{L}$, which is either a leaf of $\td{L}$ or the closure of a  component  of $\h^2 \sm |\td{L}|$.
Indeed $\beta$ isometrically embeds $X$ into the hyperbolic plane bounded by $\pt_\If f(B)$.

Clearly $\Phi \cc f$ embeds  $B$ onto $H \st \h^3$.
The {\it core} of the maximal ball $B$, denoted by  $\Core(B)$, is the convex hull of $\pt_\In B$ with $B$ conformally identified with $\h^2$.
Thus we have a unique embedding $\kp_B$ of $\Core(B)$ onto  $X \st \h^2$ so that $\beta \cc \kp_B =  \Phi \cc f$ on $\Core(B)$.
Therefore, for each $x \in \Core(B)$, the hyperplane  $H$ is called a {\it hyperbolic support plane} of $\beta$ at $x$.
It turns out that, for different maximal balls $B$ in $\td{C}$, their cores $\Core(B)$ are disjoint.
Moreover $\Ct$ decomposes into these cores.
Therefore we can define a continuous map $\kpt\cn \Ct \to \h^2$ by $\kpt = \kp    _B$ on $\Core(B)$ for all maximal balls $B$. 
Then $\kpt$ commutes with the action of $\po(S)$, and thus it descends to the {\it collapsing map} $\kp\cn C \to \tau$, which respects the markings by homeomorphisms from $S$. (See \cite[\S 8]{Kullkani-Pinkall-94}), \cite[\S 2.3]{Kamishima-Tan-92}

\subsubsection{Canonial lamination on projective surfaces} 

Each boundary component of $\Core(B)$ is a biinfinite line properly embedded in $\Ct$.
Then, by taking the union of  $\pt \Core(B)$ over all maximal balls $B$ in $\Ct$, we obtain a (topological) lamination $\nut$ on $\Ct$.
Then $\kpt$ embeds  each leaf of $\nut$ onto a leaf of $\td{\lam}$. 
Since $\pi_1(S)$ preserves the decomposition of $\til{C}$ into the cores, $\nut$ descends to a lamination $\nu$ on $C$, and $\kp$ embeds each leaf of $\mu$ onto a leaf of $\ld$. 

Moreover $\nu$ is equipped with a natural transversal measure $\omega$ so that $\LL := (\nu, \omega)$ descends to $L$ by $\kap$. 
Then $\LL$ is called the {\it canonical measured lamination} on $C$.
If $\ap$ is  a curve transversal to $\omega$ and it is of infinitesimal length, its transversal measure $\omega(\ap)$ is the angle between the hyperbolic support planes in $\h^3$ corresponding the leaves of $\nu$ containing the endpoints of $\ap$. 
The transversal measure $\omega$ is infinitesimally given by the angles between hyperplanes supporting of $\beta$.  

Let  $M$ be the union of the closed leaves $\ell_1, \ell_2, \dt, \ell_n$ of $L$. 
In particular $\kap$ is a $C^1$-diffeomorphism in the complement of $\kap^{-1}(M)$, and there $\omega$ is exactly the pullback of $\nu$ by $\kap$. 
We describe $\LL$ on  $\kap^{-1}(M)$ below. 

For each $i = 1,2, \dots, n$,  $\kp\iv(\ell_i)$ is a compact cylinder embedded in $C$.
This cylinder is foliated by closed leaves $m$ of $\nu$  that are diffeomorphic to  $\ell$ by $\kap$. 
The total transversal measure of  $\kp\iv(\ell_i)$ is the weight $w_\mu(\ell_i)$ of $\ell_i$ given by $\mu$. 

In addition,  for every $s \in \ell$, $\kp^{-1}(s)$ is a circular arc connecting the boundary circles of $\kp\iv(\ell_i)$ and it is orthogonal to each closed leaf $\nu$ in $\kp^{-1}(\ell_i)$. 

\subsubsection{Thurston metric on projective structures}\Label{11-12-3}
Every projective surface $C$ on $S$ has a natural Hyperbolic/Euclidean type metric associated with $\kap \col (C, \LL) \to (\tau, L)$. 

The cylinder $\kp\iv(\ell_i)$ has a natural Euclidean metric, and it is isometric to a product of a circle of length  $\length_\tau(\ell_i)$ and  the interval $[0, w_\mu(\ell_i)]$.
The Riemannian metric respects the conformal structure of $C$ on $\kap^{-1}(\ell_i)$.
For each closed leaf $\ell$ of $\nu$ in $\kap^{-1}(\ell_i)$, $\kap| \ell$ is an isometry onto $\ell$. 
For each $s \in \ell$, the metric on the circular arc $\kap^{-1}(s)$ is given by the transversal measure $\omega$.
This Euclidean metric is the restriction of the Thurston metric on $C$ to $\kappa^{-1}(\ell_i)$.

On the other hand, the restriction of $\kp\cn C \to \tau$ to $C \sm \kp\iv(M)$  is a $C^1$-diffeomorphism onto $\tau \sm M$. 
Thus $C \sm \kp\iv(M)$ has the hyperbolic metric obtained by pulling back the hyperbolic metric of $\tau$ via $\kp$.

On each stratum $R$ of $(C, \LL)$,  the Thurston metric is the restriction of the Euclidean or Hyperbolic metric defined above. 
In this paper, it suffices to use the Thurston metric on each stratum. 
(If $L$ is a union of disjoint weighted loops, the Thurston metric on $C \cong (\tau, L)$ is the piecewise  Euclidean/hyperbolic metric that is the sum of the Euclidean metric on the cylinders and the hyperbolic metric in the complement. 
For general $L$, we can take a sequence of weighted loops $\ell_i$ converging to $L$ as $i \to \infty$. 
Then the Thurston metric on $C \cong (\tau, L)$ is  the limit of the Thurston metrics on the projective surfaces given by $(\tau, \ell_i)$.)

\subsection{Equivariant homotopies}

\begin{lemma}\Label{9-11-2}
Let $\rho\cn \pi_1(S) \to \psl$ be a homomorphism. 
Suppose that there are two continuous maps  $\beta\cn \td{S} \to \h^3$ and  $\beta'\cn \td{S} \to \h^3$ that are $\rho$-equivariant. 
Then  $\beta$ and $\beta'$ are $\rho$-equivariantly homotopic, i.e. homotopic through $\rho$-equivariant maps $\td{S} \to \h^3$. 
\end{lemma}

 \proof 
We proceed in steps.
{\it Step 1.} We first construct an equivariant homotopy for each loop $l$ on $S$.
Let $\lt$ be a lift of $l$ to the universal cover $\St$ of $S$.
Then $\beta_1 | \lt$ and  $\beta_2 |\lt $ are equivariant under the restriction of $\rho$ to  $\langle l \rangle$, the infinite cyclic subgroup of $\pi_1(S)$ generated by $l \in \po(S)$. 
Note that $\rho(l)$ may be of any type of hyperbolic isometry, i.e. parabolic, elliptic, or loxodromic. 
Then, in each case,  we can easily construct a homotopy between $\beta_1|\lt$ and $\beta_2|\lt$  that is equivariant under  $\rho|\langle l \rangle$.

{\it Step 2.} Next let $P$ be a pair of pants embedded in $S$.
Let $l_1, l_2, l_3$ be the boundary loops of $P$.
Let $\Pt$ be a lift of $P$ to $\St$.
Then we show that there is a homotopy between  $\beta_1|\Pt$ and $\beta_2| \Pt$ equivariant under $\rho|\po(P)$.
For each $j =1,2,3$, pick a lift $\lt_j$ of $l_j$ to $\Pt$. 
Then, by Step 1, we have a homotopy connecting  $\beta_1|\lt_j$ and $\beta_2|\lt_j$ equivariant under $\rho|\po(l_j)$.
By equivariantly extending those homotopies, we have a homotopy $\Phi_{\pt \Pt}\cn \pt \Pt \times [0,1] \to \h^3$ between $\beta_1| \pt \Pt$ and $\beta_2 | \pt \Pt$  that is $\rho|\po(P)$-equivariant.
Pick  disjoint arcs $a_1, a_2, a_3$  properly embedded in $P$ that  decompose $P$ into two hexagons. 
Then we can easily extend the homotopy $\Phi_{\pt \Pt}$ to a homotopy between the lifts of arcs $a_i ~(\text{for } i  = 1,2,3)$ to $\Pt$ so that the extension is still equivariant under $\rho| \po(P)$.
Since $a_i$'s decompose $P$ into simply connected surfaces, we can further extend the homotopy to the $\rho| \pi_1(P)$-equivariant homotopy between $\beta_1 |\til{P}$ to $\beta_2 | \til{P}$.  

{\it Step 3.} Pick a {\it maximal} multiloop $M$ on $S$, which decomposes $S$ into pairs of pants $P_k ~ (k = 1, 2, \dt, 2(g-1))$.
Let $\Mt$ denote the total lift of $M$ to $\St$. 
Then we can obtain a $\rho$-equivariant homotopy $\Phi_{\Mt}$ between $\beta_1| \Mt$  and $\beta_2| \Mt$ similarly to the way we obtained the homotopy $\Phi_{\pt \Pt}$ in Step 2. 
For each  $k \in \{ 1,2, \dt, 2(g-1)\}$, let $\Pt_k$ be a lift  of $P_k$ to $\St$.
Then $\Phi_{\Mt}$ induces a homotopy  $\Phi_{\pt \Pt_k}$ between $\beta_1| \pt \Pt_k$ and $\beta_2| \pt \Pt_k$ that is equivariant under $\rho|\pi_1(P_k)$.
Similarly to Step 2, we can extend this induced homotopy to a  homotopy   $\Phi_{\Pt_k}$ between $\beta_1| \Pt_k$ and $\beta_2| \Pt_k$ that is equivariant under $\rho|\po(P_k)$.
By $\rho$-equivalently extending  the homotopies $\Phi_{\Pt_k} ~(k = 1,2, \ldots, 2(g-1))$, we obtain a $\rho$-equivariant homotopy between $\beta_1$ and $\beta_2$.
\endproof

\subsection{Isomorphisms of projective structures via developing maps} 

\begin{definition}\Label{isomorphic}
Let $F$ be a surface and $\rho\cn \pi_1(F) \to \psl$ be a homomorphism. 
Let $C_1 = (f_1, \rho)$ and $C_2 = (f_2, \rho)$ be  projective structures on $F$ sharing holonomy $\rho$, where  $\td{F}$ is the universal cover of $F$ and  $f_1, f_2\cn \td{F} \to \rs$ are their developing maps.
Then $C_1$ and $C_2$  are {\it isomorphic} (as projective structures) via $f_1$ and $f_2$, if there is a homeomorphism $\phi\cn F \to F$ homotopic to the identify map, such that, letting $\phit\cn \td{F} \to \td{F}$ be the lift of $\phi$, we have $f_1 = f_2 \cc \phit\cn \til{C}_1 \to \rs$.
We also say that the isomorphism $\phi$ is {\it compatible} with $f_1$ and $f_2$.
\end{definition}


\section{Bilipschitz curves  on pleated surfaces}\Label{BilipschitzGeodesics}
Let $L$ be a measured geodesic lamination on $\h^2$ with $\Area_{\h^2}(|L|) = 0$.
Let $\beta_L\cn \h^2 \to \h^3$ be the bending map induced by $L$. 
In this section, we prove
\begin{proposition}\Label{8-14-1}
For every $\epsilon > 0$, there is $\delta > 0$  such that, if $l$ is a geodesic on $\h^2$ with $\angle_{\h^2}(l, L) < \delta$, then,
\begin{itemize}
\item[(i)] $\beta_{L}$ is a $(1 + \epsilon)$-bilipschitz embedding $l  \to \h^3$,\end{itemize}
 and, letting $m$ be the geodesic  in $\h^3$ connecting the endpoints of the quasigeodesic $\beta_{L}\vert l$, 
\begin{itemize}
\item[(ii)] for each point $x \in l$, $\beta_L(x)$ is $\epsilon$-close to $m$, and, if $\beta_L$ is differentiable at $x$, then the tangent vector of $\beta_L|l$ at $x$ is \underline{$\epsilon$-parallel} to $m$,\end{itemize}
 that is,  
 the tangent vector of $\beta_L|l$ in $\h^3$ at $x$ is $\epsilon$-nearly orthogonal  to the(totally geodesic) hyperbolic plane  orthogonal to $m$ and containing $\beta(x)$.  
  \end{proposition}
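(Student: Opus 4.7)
\emph{Plan.} The plan is to reduce to the case of a finite-leaf weighted lamination and then to analyze the piecewise geodesic image $\beta_L(l)$ directly.

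First I would approximate $L$ by weighted laminations $L_n = \sum_k \theta_{n,k}\delta_{\lambda_{n,k}}$ supported on finitely many leaves, arranged so that $\angle(l,L_n)<\ep$ (up to a harmless shrinking of $\ep$) and $\beta_{L_n}\to\beta_L$ locally uniformly; the hypothesis $\on{Area}_{\h^2}(\ld)=0$ ensures this passes through cleanly. Uniform estimates in the finite-leaf case then yield the general statement by passing to the limit.

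In the finite-leaf case, enumerate the intersections $p_1,\dots,p_n$ of $l$ with the leaves $\ld_1,\dots,\ld_n$ along $l$, with crossing angles $\ap_i \le \ep$ and bending weights $\theta_i$. Then $\beta_L(l)$ is a piecewise geodesic in $\h^3$ with vertices $\beta_L(p_i)$, and at each vertex the unit tangent is rotated about the axis $\beta_L(\ld_i)$ by angle $\theta_i$; decomposing the incoming and outgoing tangents $T^\mp$ into components along and transverse to the axis gives
\[
\sin\!\bigl(\tfrac12\angle(T^-,T^+)\bigr) \;=\; \sin\ap_i\,\bigl|\sin(\theta_i/2)\bigr|,
\]
so the per-vertex angular jump is at most $2\sin\ap_i \le 2\sin\ep$ \emph{independently of the weight $\theta_i$}.

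The heart of the proof is a uniform cone bound: all successive tangent vectors of $\beta_L(l)$ lie within a cone of opening $\eta(\ep)\to 0$ about a common direction. The structural input is that every rotation axis $\beta_L(\ld_i)$ makes angle at most $\ep$ with the current tangent of $\beta_L(l)$ (since $\beta_L$ is isometric on leaves and plaques), so all bending rotations have axes nearly aligned with a common direction. Rotations with axes close to a common direction act approximately commutatively on vectors close to that direction, and formalising this --- e.g.\ by a direct computation on the unit sphere of a tangent space, or by linearising in the Lie algebra of $\on{SO}(3)$ --- gives a cone bound that can be made arbitrarily small by choosing $\ep$ small, uniformly in $n$ and in the $\theta_i$.

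With the cone bound in hand, part (i) follows: $\beta_L|l$ is arc-length preserving because $\beta_L$ is an isometry on each plaque and each leaf, and a curve whose tangent stays within angle $\eta$ of a fixed direction satisfies $\text{chord}\ge\cos\eta\cdot\text{arc}$, giving a bilipschitz constant of at most $\sec\eta\le 1+\dl$. For (ii), the same cone analysis confines $\beta_L(l)$ to a thin tubular neighborhood of the chord $m$ and forces the tangent at every smooth point to be $\dl$-parallel to $m$. The \emph{main obstacle} is the uniform cone bound itself: the naive estimate $n\cdot 2\sin\ep$ blows up with the number of crossings, so the argument must genuinely exploit the near-commutativity of the bending rotations forced by the hypothesis $\angle(l,L)<\ep$, rather than just summing the per-vertex angular jumps.
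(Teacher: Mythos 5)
Your finite-leaf reduction and the per-vertex formula $\sin\bigl(\tfrac12\angle(T^-,T^+)\bigr)=\sin\ap_i\,|\sin(\theta_i/2)|$ are both correct, but you have correctly put your finger on the hole in your own argument, and it is fatal rather than a detail to be ``formalised'': ``near-commutativity'' of the bending rotations does \emph{not} yield a uniform cone bound. Each axis $\beta_L(\ld_i)$ is within $\ep$ of the \emph{current} tangent $T_i^-$ (not of a fixed reference direction), so the sharpest thing the per-vertex estimate gives is linear accumulation $\phi_n\le\phi_0+2n\ep$ of the deviation angle $\phi_i$ from any fixed direction, and if one instead pretends the axes are $\ep$-close to a fixed direction the recursion is even worse ($\phi_i\le 3\phi_{i-1}+2\ep$). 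There is no Euclidean/tangent-space cancellation forced by the alignment: nothing prevents all the $\ld_i$ from lying on the same side of $l$ and contributing drifts of the same sign, and a geodesic can cross infinitely many leaves of a lamination in a bounded window, so $n$ is not bounded by anything.

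What actually stops the drift is the negative curvature of $\h^3$: a piecewise geodesic whose geodesic pieces have a length lower bound $K$ and whose bends have a small upper bound is a uniform quasi-geodesic, and the endpoint direction is controlled independently of the number of vertices (this is the result the paper invokes from Epstein et al., \cite[\S I.4.2]{Canary-Epstein-Green-06}). Your bends, however, are separated by plaque-segments of arbitrarily small length, so this theorem does not apply to them directly. The paper's resolution is to coarse-grain: subdivide $l$ into subsegments $[p_i,p_{i+1}]$ of length between $K/2$ and $K$, and prove a \emph{per-subsegment} chord-length and angle estimate (Proposition~\ref{8-28-1}) that holds independently of how many leaves cross the subsegment. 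That estimate is where the real geometry enters: one erects a right triangle $\triangle xyz$ with $[x,z]$ lying on a translate of the nearest crossing leaf (hence disjoint from the relevant intersection $I([x,y],L)$, so $\beta_I$ embeds it isometrically), uses $1$-Lipschitzness of $\beta_I$ on $[z,y]$, applies the triangle inequality to bound $d(\beta(x),\beta(y))$ from below, and then uses the hyperbolic right-triangle inequality of Lemma~\ref{8-30-1}. Only after this does the quasi-geodesic stability argument go through. Your plan omits both of these ingredients and therefore does not close; the finite-leaf approximation and the $\sec\eta$ chord--arc bound at the end are fine, but they rest on the cone bound, and the cone bound has no proof.
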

\begin{remark}  
Similar  statements are in \cite{Canary-Epstein-Green-06, Epstein-Mardern-Markovic, Baba-10}. 
However the condition on $\angle_{\h^3}(l, L)$ is new.
\end{remark}
Let $\Phi_m\cn \h^3 \to m$ be the nearest point projection.
Then
\begin{corollary}\Label{9-29-1}
(iii) $\Phi_m \cc \beta_L | \,l$  is a $(1 + \epsilon)$-bilipschitz map $l \to m$. 
\end{corollary}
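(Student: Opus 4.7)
The plan is to derive Corollary \ref{9-29-1} directly by combining the two conclusions of Proposition \ref{8-14-1}; no new geometric ingredient is needed. The upper Lipschitz bound is automatic: the nearest point projection $\Phi_m\cn\h^3\to m$ onto a geodesic is $1$-Lipschitz (as in any $\mathrm{CAT}(0)$ space), so (i) immediately implies that $\Phi_m \cc \beta_L | l$ is $(1+\dl)$-Lipschitz.

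For the lower bilipschitz bound I would work in Fermi coordinates $(t,\rho,\phi)$ along $m$, in which the metric takes the form $ds^2=\cosh^2(\rho)\,dt^2+d\rho^2+\sinh^2(\rho)\,d\phi^2$ and $\Phi_m$ becomes the projection $(t,\rho,\phi)\mapsto t$. Let $x\in l$ be a point where $\beta_L$ is differentiable; such $x$ form a full-measure subset of $l$, since the only points of non-differentiability along $l$ are the transverse crossings with atomic leaves of $\mu$, which form a countable set. Write $v:=d(\beta_L|l)(T_x l)$. Hypothesis (ii) gives $\rho(\beta_L(x))<\dl$ and, moreover, that $v$ is $\dl$-parallel to $m$, which in Fermi coordinates says that the component of $v$ along the unit horizontal direction $\pt_t/\cosh(\rho)$ has norm at least $\cos(\dl)\,|v|$. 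A short computation in the Fermi metric then yields $|d\Phi_m(v)|=|v^t|\ge \cos(\dl)|v|/\cosh(\dl)$.

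Combining this pointwise estimate with the lower bilipschitz bound $|v|\ge(1+\dl)\iv |T_x l|$ coming from (i), one obtains $|d(\Phi_m\cc\beta_L)(T_x l)|\ge \frac{\cos(\dl)}{\cosh(\dl)(1+\dl)}|T_x l|$ at almost every $x\in l$. Integrating along $l$ (the non-differentiable points, being countable, contribute nothing to the integral) and then shrinking $\dl$ in Proposition \ref{8-14-1} appropriately relative to the target bilipschitz constant in the corollary yields the required $(1+\dl)$-bilipschitz estimate.

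The only subtleties are the translation of the ``$\dl$-parallel'' condition into coordinates and the handling of non-smooth points of $\beta_L$ along $l$; neither poses a real obstacle, so the corollary should really be viewed as a quick consequence of the proposition rather than an independent result.
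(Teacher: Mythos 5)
Your argument is essentially the paper's own: the Fermi coordinate system $(t,\rho,\phi)$ along $m$ is precisely the foliation of $\h^3$ by totally geodesic hyperplanes orthogonal to $m$ that the paper invokes, with $\Phi_m$ reading off the $t$-coordinate, and your pointwise estimate from the $\dl$-parallel condition is exactly what the paper means by the curve $\dl$-orthogonally intersecting that foliation while staying $\dl$-close to $m$. Your version just spells out the coordinate computation and cleanly isolates the trivial $(1+\dl)$-Lipschitz upper bound; otherwise it is the same proof (and like the paper's, it implicitly uses that the $t$-coordinate of $\beta_L|l$ is monotone, which is forced once $\dl$ is small since the tangent cannot jump across a fiber without violating the bilipschitz bound (i)).
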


\begin{proof}[Proof of corollary]
For each point $y \in m$, $\Phi_m\iv(y)$ is the  hyperbolic lane in $\h^3$ orthogonal to $m$.
Then $\h^3$ is foliated by the hyperplanes. 
Since $\Area_{\h^2}(\ld) = 0$, $\beta|l$ is differentiable almost everywhere.  
By Proposition \ref{8-14-1} (ii),   the curve $\beta_L|l$ stays in a small neighborhood of $m$ and  $\epsilon$-orthogonally intersects the hyperplanes of $\h^3$ at almost every point of $l$.
If $\delta > 0$ is sufficiently small,  at almost every point on $l$,   the ratio of the lengths of the tangent vector along $\beta_L\vert l$ and of its $\Phi_m$-image is bounded by $ (1 + \epsilon)$.
\end{proof}

We first prove an analogue of Proposition \ref{8-14-1}  for geodesic segments of bounded lengths: 
\begin{proposition}\Label{8-28-1}
For every (large) $K > 0$ and  (small) $\epsilon > 0$,  there exists a $\delta > 0$ such that:
\begin{itemize}
\item[(i)] If $L$ is a measured geodesic lamination on $\h^2$, and $l\cn \R \to \h^2$ is a parametrized geodesic  at unit speed such that $\angle(l, L) < \delta$, then, if points $x, y$ on $l \,(\cong \R)$ satisfies  $0 <  y - x < K$, then we have $(1 - \epsilon) \cdot \dist_{\h^2}(x, y) <  \dist_{\h^3} (\beta_L(x), \beta_L(y))$.
\item[(ii)] If $\beta| l$ is differentiable at $x \in \R$,  for all $y \in  \ell$ with $0 <  y - x  < K$,  then $\theta_y(x)  < \epsilon$,
where $\theta_y(x) \in [0, \pi]$ is the angle between the  geodesic segment from $\beta_L(x)$ to $\beta_L(y)$ and the tangent vector of $\beta_L | l$ at $\beta(x)$; see Figure \ref{10-30-2'}.
\end{itemize}

\begin{figure}
\begin{overpic}[scale=.5
] {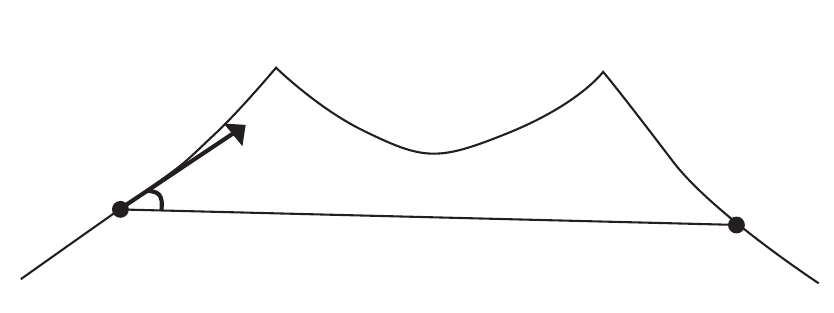} 
      \put(5,18 ){$\beta_L(x)$}  
 \put(90,15 ){$\beta_L(y)$}  
 \put(47,24 ){$\beta_L|l$}  
\put(24, 14.5){$\theta_y(x)$}
      \end{overpic}
\caption{}\label{10-30-2'}
\end{figure}

\end{proposition}

\Proof~
 First consider a right hyperbolic triangle $\triangle ABC$ in $ \h^2$ (with geodesic edges)  with $\angle C = \pi/2$, where $A, B, C$  are its vertices.
Then it is easy to prove
\begin{lemma}\Label{8-30-1}
For every $K > 0$ and $\epsilon' > 0$, there is $\delta > 0$ such that, if $\angle B < \delta$ and $\dist(A, B)   <  K$, then 
\begin{itemize}
\item[(i)] $(1 - \epsilon') \cdot \operatorname{dist}(A,B) <  \dist(B,C) - \dist(C,A)$, and
\item[(ii)]  $\angle  A'BC < \epsilon'$ for every $A' \in \h^2$ with $\dist(C, A') < \dist (C, A)$.
\end{itemize}
\end{lemma}

Let $K > 0$ and $\epsilon > 0$.
Let $\epsilon' = \epsilon/2$.
Then let $\delta > 0$ be the number obtained by applying  Lemma \ref{8-30-1} to $K$ and $\epsilon'$.
Then we can  in addition assume that $\delta < \epsilon/2$.

Let $x$ and $y$ be distinct points on $l$ with $0 <  y - x  < K$.
Let $I $ be the minimal sublamination of $L$ containing the leaves that intersect $[x,y]$.
We can assume that $[x,y]$ intersects at least one leaf of $L$ transversally, since otherwise Proposition \ref{8-28-1} clearly holds. 
Let $m$ denote the leaf of $I$ closest to $x$.
Then, there is a unique point $z \in \h^2$ such that  $\triangle x y z$ is a hyperbolic triangle with $\angle z = \pi/2$ and such that $[x,z] \st \eta(m)$, where $\eta\cn \h^2 \to \h^2$ is the translation along $l$ taking the point $l \cap m$ to $x$.  (See Figure 3.)
\begin{figure}
\begin{overpic}[scale=.4
] {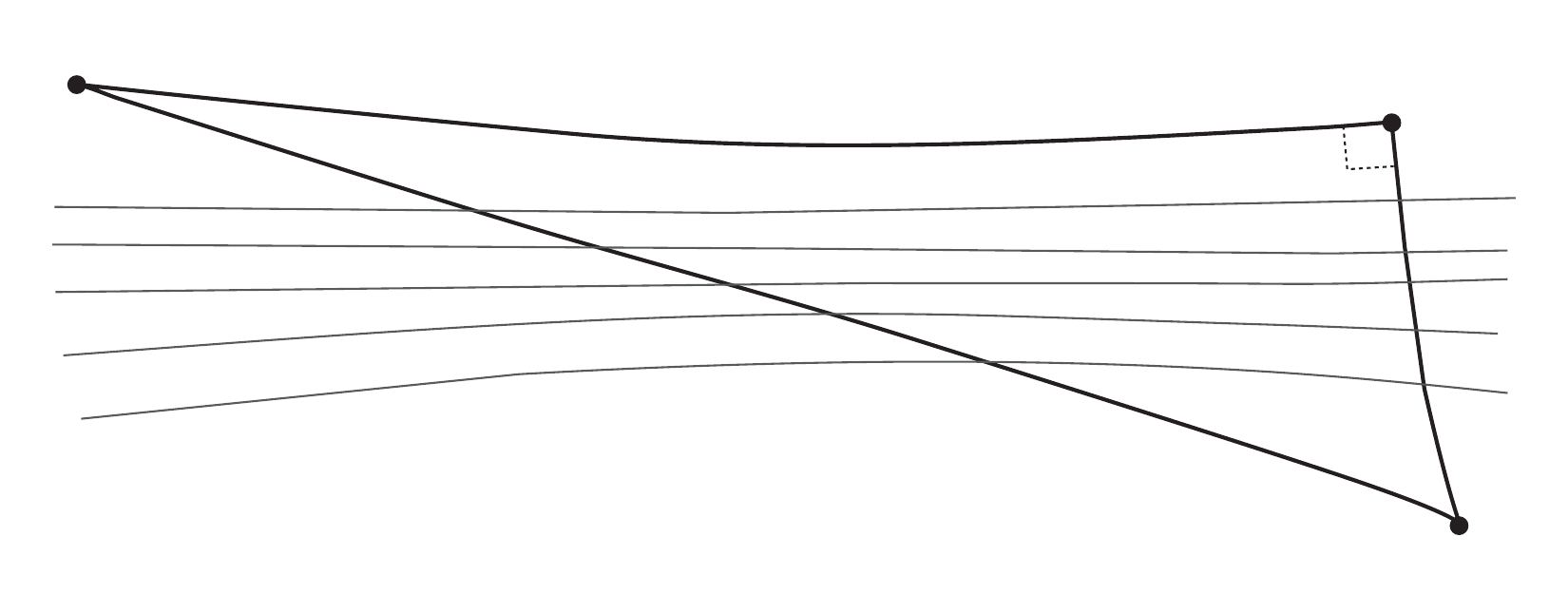} 
\put(2,33){$x$}
\put(89,31){$z$}
\put(93,1){$y$}
\put(10,25){$m$}
      \end{overpic}
\caption{}\label{10-30-2}
\end{figure}

Then $[x, z]$ is disjoint from $I$ if $x$ is in the complement of $I$.
Then, since $\angle (l, L) < \delta$, in particular  $\angle yxz < \delta$.
Let $\beta_I\cn \h^2 \to \h^3$ be the bending map induced by $I$.
Then $\beta_I$ isometrically embeds $[x, z]$ into $\h^3$.
Therefore $\dist_{\h^3}(\beta_I(x), \beta_I(z)) = \dist_{\h^2}(x, z)$. 
Since bending maps are $1$-lipschitz, $\dist(\beta_I(z), \beta_I(y)) \leq \dist(z, y)$.
By the triangle inequality, we have  
\begin{eqnarray*}
\dist(\beta_I(x), \beta_I(y)) &\geq& \dist(\beta_I(x), \beta_I(z)) - \dist(\beta_I(z), \beta_I(y))\\
 &\geq& \dist (x,z) - \dist(z,y).
\end{eqnarray*}

Then, by Lemma \ref{8-30-1} (i),  we have $$\dist(\beta_I(x), \beta_I(y)) > (1- \epsilon')\cdot \dist(x,y).$$
Since $\beta_I = \beta_L$ on $[x, y]$,  $\dist(\beta_L(x), \beta_L(y)) > (1 - \epsilon') \cdot \dist(x,y)$; thus we have shown (i).

 By Lemma \ref{8-30-1} (ii) applied to $\triangle A' B C =  \triangle \beta_I(y) \beta_I(x) \beta_I(z)$, we have \linebreak[4] $\angle \beta_I(y) \beta_I(x) \beta_I(z) < \epsilon'$.
 By the triangle inequality on the sphere in $\h^3$ centered at $\beta_I(x)$ of  infinitesimal radius,   $$\theta_y(x) \leq \angle y x z + \angle \beta_I(y) \beta_I(x) \beta_I(z)  < \delta + \epsilon'  < \epsilon.$$
 Thus we have proved (ii).
\Qed{8-28-1}

{\it Proof  of Proposition \ref{8-14-1}.}
(i)  
We first show for every $\epsilon' > 0$, there is $\delta > 0$ such that, if a geodesic lamination $L$ on $\h^2$ and a geodesic $l \cn \R \to \h^2$ satisfy $\angle_{\h^2}(l, L) < \delta$, then   $\theta_x(y) < \epsilon'$ for all  distinct points $x, y$   on $l$ with $x < y$ such that $\beta_L|l$ is differentiable at $y$.  
Pick   $K > 0$  and $\epsilon'' > 0$ with $\epsilon'' < \epsilon'/2$.
Then we can assume that $[x, y]$ is not contained in a leaf of $L$ and,  by Proposition \ref{8-28-1} (ii), that $\dist(x, y) > K$.
Let $\delta' = \delta'(K, \epsilon'') > 0$ be the number obtained by applying Lemma \ref{8-28-1}  to $K$ and $\epsilon''$.
Then divide the geodesic segment $[x,y]$ into subsegments $[p_0, p_1],  [p_1,p_2], \dt, [p_{n-1}, p_n]$, where $x = p_0 <  p_1 < \dt <  p_n = y$,   so that 
\begin{itemize}
\item $K/2 <  p_{i + 1} - p_i  < K$ for $i = 0,1,\dt, n-1$.
\item $p_1, p_2, \dots, p_{n-1}$ are in the complement of $|L|$ (since $\Area_{\h^2}(|L|) = 0$).

\end{itemize}
Let $\beta$ be  the bending map $\beta_L$.
Then the union of the geodesic segments $[\beta(p_i), \beta(p_{i + 1})]$ in $\h^3$ over $i = 0, \dots, n-1$ is a piecewise-geodesic curve in $\h^3$ connecting $\beta(x)$ to $\beta(y)$.
If $\angle(l, L) < \delta'$, then, by Lemma \ref{8-28-1} (ii),  we have $\theta_{p_{n-1}}(y) < \epsilon''$ and
 $\pi - \angle_{\h^3}(\beta(p_{i-1}), \beta(p_i), \beta(p_{i +1}) ) < 2\epsilon''$ for all $i = 1,2,\dt, n-1$.
 By Lemma \ref{8-28-1} (i), $\dist(\beta(p_{i}), \beta(p_{i+1})) > (1 - \epsilon'') \cdot (K/2)$ for $i = 0,1, \dt, n-1$.
Then,   if $\epsilon''>0$ is sufficiently small, since the exterior angles of the piecewise-geodesic curve are sufficiently small relative to the lengths of the segments, we have $\angle \beta(x) \beta(y) \beta(p_{n-1}) < \epsilon'/2$ (see \cite[\S I.4.2]{Canary-Epstein-Green-06}; also \cite{Epstein-Mardern-Markovic, Baba-10}). 
Then, by the triangle inequality, $$0 < \theta_x(y) \leq \angle    \beta(p_{n-1}) \beta(y) \beta(x)   +   \theta_{p_{n-1}}(y) < \epsilon'/2 + \epsilon''.$$
Hence $0 < \theta_x(y) < \epsilon'$.
We have
 $$ \frac{d ~\dist(\beta(x),\beta(y))}{dy} = \cos (\theta_x(y))$$
(see \cite[\S I.4.2]{Canary-Epstein-Green-06}; also \cite{Epstein-Mardern-Markovic, Baba-10}).
Then, for every $\epsilon > 0$, by taking a smaller $\epsilon' > 0$ if necessary,  we have $\frac{1}{1 + \epsilon} <  \cos(\theta_x(y)) \leq 1$ for all different $x, y$ on $l$ such that $\beta|l$ is differentiable at $y$.
Since $\beta|l$ is differentiable at almost all points of $l$,  $\beta|l$ is a $(1 + \epsilon)$-bilipschitz embedding. 

(ii) 
For every $\epsilon > 0$, pick  $\epsilon' > 0$ with $2\epsilon' < \epsilon$.
Then  we have shown, in proving (i), that there exists $\delta > 0$, such that, if  $\angle_{\h^2} (l , L )< \delta$, then
$\theta_x(y) < \epsilon'$ for all different   $x, y \in l$ such that  $\beta|l$ is differentiable at $y$.  
Since $\beta| l$ is bilipschitz, it takes the endpoints $\pm \If$ of the geodesic $l\cn \R \to \h^2$ to the distinct points $\beta(-\If), \beta(\If)$ of the ideal boundary of $\h^3$.  
Thus taking the limits as $x$ goes to the end points of $l$, we have $\theta_{- \If} (y), \theta_{\If} (y) \leq \epsilon'$.
Thus,  we have $\beta(-\If) \beta(y) \beta(\If) > \pi - 2 \epsilon'$.
Let $m$ be the geodesic in $\h^3$ connecting  $\beta(-\If), \beta(\If)$ so that $m$ is a bounded distance away from $\beta|l$.
It is well-known that the area of a triangle in $\h^2$ is equal to $\pi$ minus the sum of the angles of its vertices. 
Thus the area of the geodesic triangle $\triangle \beta(-\If) \beta(y) \beta(\If)$ is less than $2 \epsilon'$.
Thus,  if necessary by taking  smaller $\epsilon' > 0$, we can assume that  $\dist_{\h^3}(\beta(y), m) < \epsilon$. 
Recalling that $\Phi_m\cn \h^3 \to m$ is the nearest point projection, $\triangle \beta(-\If)\beta(y)(\Phi_m\cc\beta)(y)$ has area less than $\epsilon'$.
Applying the same formula to this ideal triangle, we have $\angle_{\h^3} \beta(- \If) \beta(y) (\Phi_m\cc\beta)(y) < \pi/2 -  \epsilon'$.
Since $\theta_{- \If} (y) < \epsilon'$ and  $2 \epsilon'  < \epsilon$, by the triangle inequality, we see that the tangent vector of  $\beta|l$ at $y$ is $\epsilon$-parallel to $m$.
\Qed{8-14-1}

\section{Local stability of bending maps in $\GL$}\Label{stability}

\subsection{Bending maps with a fixed bending lamination.}\Label{062113}

\begin{definition}
Let $X, Y$ be  metric space with distance functions $d_X, d_Y$. 
For every $\ep > 0$, a  map $\phi\cn X \to Y$ is an \underline{$\ep$-rough isometric embedding}, if 
$d_X( p, q) - \ep < d_Y(\phi(p), \phi(q)) < d_X(p, q) + \ep$ for all $p, q \in X$.
It is an \underline{$\ep$-rough isometry} if, in addition,  the $\ep$-neighborhood of $\Im(\phi)$ is $Y$.
\end{definition}

\begin{theorem}\Label{12-19}
Let $(\tau, \lambda) \in \TT \times \GL$ and $\rho\cn \po(S) \to \psl$ be a homomorphism.
Suppose that there is a $\rho$-equivariant pleated surface $\beta\cn \h^2 \to \h^3$ realizing $(\tau, \lambda)$. 
Then, for every $\ep > 0$, there is a $\dl > 0$, such that, if there is a pair $(\sigma, \nu) \in \TT \times \GL$ and a $\rho$-equivariant pleated surface $\beta'\cn \h^2 \to \h^3$ realizing $(\sigma, \nu)$ and $\angle_{\tau}(\lambda, \nu) < \dl$,
then $\beta'$ and $\beta$ are $\ep$-close in the following sense: 
There is a marking-preserving homeomorphism  $\psi\cn \tau \to \sigma$ such that $\psi$ is an $\ep$-rough isometry and, letting  $\td{\psi}\cn \h^2 \to \h^2$ be its lift, the maps $\beta$ and $\beta' \cc \td{\psi}$ are $\ep$-close in the $C^0$-topology and moreover
  in the $C^1$-topology in the complement of the $\ep$-neighborhood of $|\td{\lam}| \cup |\td{\nu}|$ in  the universal cover $\td{\tau} = \h^2$, where $\td{\lam}$ and $\td{\nu}$ are the total lifts of $\lam$ and (the geodesic representative of) $\nu$ on $\tau$ to $\til{\tau}$. 
\end{theorem}

The rest of \S \ref{062113} is the proof of Theorem \ref{12-19}.
Suppose that there is a sequence of $\rho$-equivariant pleated surfaces $\beta_i\cn \h^2 \to \h^3$ realizing some $(\sigma_i, \nu_i) \in \TT \times \GL$.
Let $\nu_{i, \tau}$ denote the geodesic representative of $\nu_i$ on $\tau$. 
Assuming  $\angle_\tau (\nu_{i, \tau} ,\ld) \to 0$ as $i \to \ify$, we will construct a  homeomorphism $\psi_i\cn \tau \to \sigma_i$ such that, for every $\ep > 0$, if $i$ is sufficiently large, then $\beta$ and $\beta_i \cc \til{\psi}_i\col \til{\tau} \to \h^3$ are $\ep$-close in the $C^0$-topology and in the $C^1$-topology in the complement of the $\ep$-neighborhood of $\nu_{i, \tau}$.

\vspace{3mm}
{\it Outline of the construction.}
We first construct $\psi_i \col \nu_{i, \tau} \to \nu_i$ such that $\psi_i$ is $(1 + \ep)$-bilipschitz on every leaf of $\nu_{i, \tau}$ for sufficiently large $i$  and, as desired,  $ \beta' \cc \til{\psi}_i \to \beta$ in the $C^0$-topology.  
This bilipschitz property is given by Proposition \ref{8-14-1}.
Then we continuously extend it to $\psi_i\col \tau \to  \sigma_i$ so that $\psi_i$ is $(1 + \ep)$-bilipschitz on each stratum of $(\tau, \nu_{i, \tau})$ and $ \beta' \cc \til{\psi}_i \to \beta$ as desired.  
In particular, given a compact subset $K$ of a stratum of $(\tau, \nu_{i, \tau})$,  $\psi_i$ is an $\ep$-rough isometry on $K$ for sufficiently large $i$ (Lemma \ref{9-14-12no1}).
We take a``sufficiently thick part" of the stratum to be the compact subset $K$.
Finally, in order to show that $\psi_i$ is an $(1 + \ep)$-rough isometry,  we show that $\psi_i$ is an $\ep$-rough isometry along arcs transversal to the lamination $\lam$ (Lemma \ref{061313n2}). 

\vspace{3mm}

Since $\GL$ is compact with the Chabauty topology, we can  assume that $\nu_{i, \tau}$ converges to some $\nu_\ify  \in \GL(\tau)$ as $i \to \In$. 
Then $\angle_\tau (\nu_\ify, \lambda) = 0$.
We moreover have
\begin{proposition}\Label{1-14no1}
$\lambda$ is a sublamination of $\nu_\ify$.
\end{proposition}

\proof 
Since $\angle_\tau(\nu_\ify,\lambda) = 0$, the union $\nu_\ify \cup \lambda$ is a geodesic lamination on $\tau$. 
Suppose that $\lambda$ is {\it not} a sublamination of $\nu_\ify$.
Then there is a  leaf of $\lambda$ {\it not} contained in $\nu_\ify$.
Below each tilde symbol $``\sim"$ denotes either the universal cover of a surface, e.g. $\td{\tau} \cong \h^2$, or the total lift of a geodesic lamination to the universal cover,  e.g. $\td{\lam}$.  \note{eng: articles}
Then there are  distinct components $R$ and $R'$ of $\td{\tau} \cong \h^2$ minus the total lift  of ${\nu_\ify \cup \lambda}$, such that 
\begin{itemize}
\item a leaf of $\td{\lam}$ separates $R$ and $R'$,
\item yet $R$ and $R'$ are contained in a single component $P$ of $\td{\tau} \minus \td{\nu}_\infi$, and
\item either
\begin{itemize} 
\item $R$ and $R'$ share a boundary geodesic and $\beta$ bends $\h^2$ along the geodesic by the angle $\pi$, or
\item $\beta(R)$ and $\beta(R')$ are contained in distinct copies of $\h^2$ in $\h^3$. 
\end{itemize}
\end{itemize} 

Since $\nu_{i, \tau} \to \nu_\If$,  for every $i \in \n$,   we can pick  a component  $P_i$ of $\td{\tau} \sm \tilde{\nu}_{i, \tau}$ such that $P_i$ converges to $P$ uniformly on compacts as $i \to \In$. 
Then $P_i \cap R \to R$ and $P_i \cap R' \to R'$ as $i \to \infi$. 
 Then, let $Q_i$ be the component of $\td{\sigma}_i \sm \td{\nu}_i$ that  corresponds to $P_i$ so that a marking-preserving homeomorphism $\sigma_i \to \tau$ induces a homeomorphism $Q_i \to P_i$.
Then $\beta_i\col \h^2 \to \h^3$ isometrically embeds $Q_i$ in a copy $H_i$ of $\h^2$.
 By Lemma \ref{9-11-2}, $\beta$ and $\beta_i$ are $\rho$-equivariantly homotopic, and  thus $\beta| P_i$ and $\beta_i| Q_i$ are a bounded distance apart pointwise via the homeomorphism  $Q_i \to P_i$.

\begin{claim}
For every $\ep > 0$, if $i$ is sufficiently large, then $\beta|P_i$ and $\beta_i|Q_i$ are $\ep$-close in the $C^0$-topology via $Q_i \to P_i$. 
\end{claim}
\begin{proof}
For every $\ep > 0$, if $i$ is sufficiently large, then $\angle_{\til{\tau}}(\ell, \lam) < \ep$ for each boundary geodesic $\ell$ of $P_i$. 
Therefore, by Proposition \ref{8-14-1}, if $i$ is sufficiently large,  $\beta_i | \pt Q_i$ is $\ep$-close to $\beta | \pt P_i$.
Thus we can in addition assume that  $\beta_i | \ell$  is an $(1 + \ep, \ep)$-quasiisometric embedding for  every geodesic or geodesic segment  $\ell$ in $P_i$ not transversal to $\lam$. 
This implies  the claim. 
\end{proof}

By this claim, $\beta| P_i$ become more and more totally geodesic as $i \to \infi$. 
Since $P_i \cap R  \to R$ and $P_i \cap R' \to R$, the hyperbolic plane containing $\beta(R)$ must coincide with the hyperbolic plane containing $\beta(R')$, and moreover $\beta(R)$ and $\beta(R')$ must be disjoint. 
This is a contradiction to the third hypothesis of $R$ and $R'$ above.
\Qed{1-14no1}

For each $i$, we enlarge the geodesic lamination $\nu_i$ to a {\it maximal} lamination, which decomposes $\sigma_i$ into ideal triangulations. 
By taking a subsequence if necessary, we can assume that the maximal lamination $\nu_i$ converges to a maximal lamination containing  $\nu$.
Thus accordingly we denote the limit by $\nu$.  
Similarly let $\nu_{i, \tau}$ be the geodesic lamination on $\tau$ representing $\nu_i$.
Then we still have $\angle_\tau (\nu_{i, \tau} ,\ld) \to 0$ as $i \to \ify$ (by Proposition \ref{1-14no1}).
Although $\nu_i$ is not a ``minimal'' lamination realizing $\beta_i$, it will not affect our arguments.

We construct a homeomorphism $\psi_i\cn \tau \to \sigma_i$ for all sufficiently large $i$.
First, since $\angle_\tau (\nu_i ,\ld) \to 0$ as $i \to \ify$,  for every $\ep > 0$, if $i$ is sufficiently large, by Lemma \ref{9-11-2},  and  Corollary \ref{9-29-1}, there is a bijection $\psi_i\cn \nu_{i, \tau} \to \nu_i$ that is a $(1 + \ep)$-bilipschitz map on each leaf of $\nu_{i, \tau}$ (to be precise  $\psi_i\cn|\nu_{i, \tau}| \to| \nu_i|$).
If there is a sequence of leaves $\ell_j$ of $\nu_{i, \tau}$ converging to a leaf $\ell_\In$ of $\nu_{i, \tau}$, then $\beta_i| \ell_j$ converges to $\beta_i | \ell_\In$  uniformly on compacts as $j \to \In$. 
Then, since if  $i$ is sufficiently large, $\beta_i|\ell_j$ are $(1 + \ep)$-bilipschitz for all $j$,   the endpoints of $\beta_i|\ell_j$ converge to the endpoints to $\beta_i|\ell_\In$ on $\rs$ as $j \to \In$. 
Since $\psi_i$ is obtained from  Corollary \ref{9-29-1},
we see that the entire map $\psi_i\cn \nu_{i, \tau} \to \nu_i$ is a homeomorphism with the topology induced from $\til{\tau}$ and $\til{\sigma}_i$.  

Given $\ep > 0$ and  a connected component $\Delta$ of $\tau \sm \nu_{i, \tau}$, let $\Delta_\ep$ be the \textit{$\ep$-thick part} of $\Delta$, that is, the union of  disks of radius $\ep$ embedded in $\Delta$.
 Then $p  \in \bd \Delta \cap \pt \Delta_\ep$ if and only if  $\Delta$ contains a disk of radius $\ep$ tangent to $\pt \Delta$ at $p$.
 Since $\Delta$ is an ideal triangle,  $\pt \Delta \cap \pt \Delta_\ep$ is compact,  and if $\ep > 0$ is sufficiently small, then it is  a union of three long (but finite) segments of the edges of $\Delta$.
 
For every $\zeta > 0$,  if $\ep > 0$ is sufficiently small, every $(1 + \ep)$-bilipschitz curve in $\h^3$, in particular $\beta_i|\ell_j$ above, is contained in the $\zeta$-neighborhood of the geodesic in $\h^3$ connecting its endpoints on $\rs$. 
Thus, since $\pt \Delta \cap \pt \Delta_\ep$ is bounded, we have
\begin{lemma}\Label{9-14-12no1}
 For every $\ep > 0$, if $i \in \N$ is sufficiently large, 
 then, for every component $\Delta$ of $\tau \sm \nu_{i, \tau}$,  the map $\psi_i$ restricts to an $\ep$-rough isometric embedding on $\bd \Delta \cap \pt \Delta_\ep$ into the corresponding component $\Delta'$ of $\sigma_i \minus \nu_i$ with respect to the path metrics on the ideal triangles $\Delta$ and $\Delta'$.
\end{lemma}

Next we extend $\psi_i\cn \nu_{i, \tau} \to \nu_i$ to $\tau \to \sigma_i$ by extending $\psi_i$ to the interior each component $\Delta$ of $\tau \sm \nu_{i, \tau}$ in a natural way. 
The ideal triangle  $\Delta$ contains a unique inscribed circle, which is tangent to each edge of $\Del$ at a single point.
Then, by connecting those tangency points, we obtain a hyperbolic triangle inscribed in $\Del$.
Each component of $\Del$ minus the inscribed triangle is a hyperbolic triangle $\check{\Delta}$ with a single ideal vertex $v$. 
Then $\check{\Delta}$ has two edges of infinite length sharing $v$, and a point of one edge corresponds to a point on the other edge so that a horocycle centered at $v$  passes through both points. 
By connecting all pairs of such corresponding points by geodesic segments, 
we obtain  a foliation of $\check{\Delta}$  by geodesic segments.
Then continuously extend $\psi_i$, which is so far defined on $\pt \Delta$, to $\check{\Delta}$ so that $\psi_i$ linearly takes each such geodesic segment connecting points on $\bdr \Delta$ to geodesic segments connecting $\psi_i$-images of the points on $\pt \Delta'$ (see Figure \ref{10-1-12}).

\begin{figure}[h]
\begin{overpic}[trim = 10mm 10mm 10mm 10mm, clip, width = 4in,
]{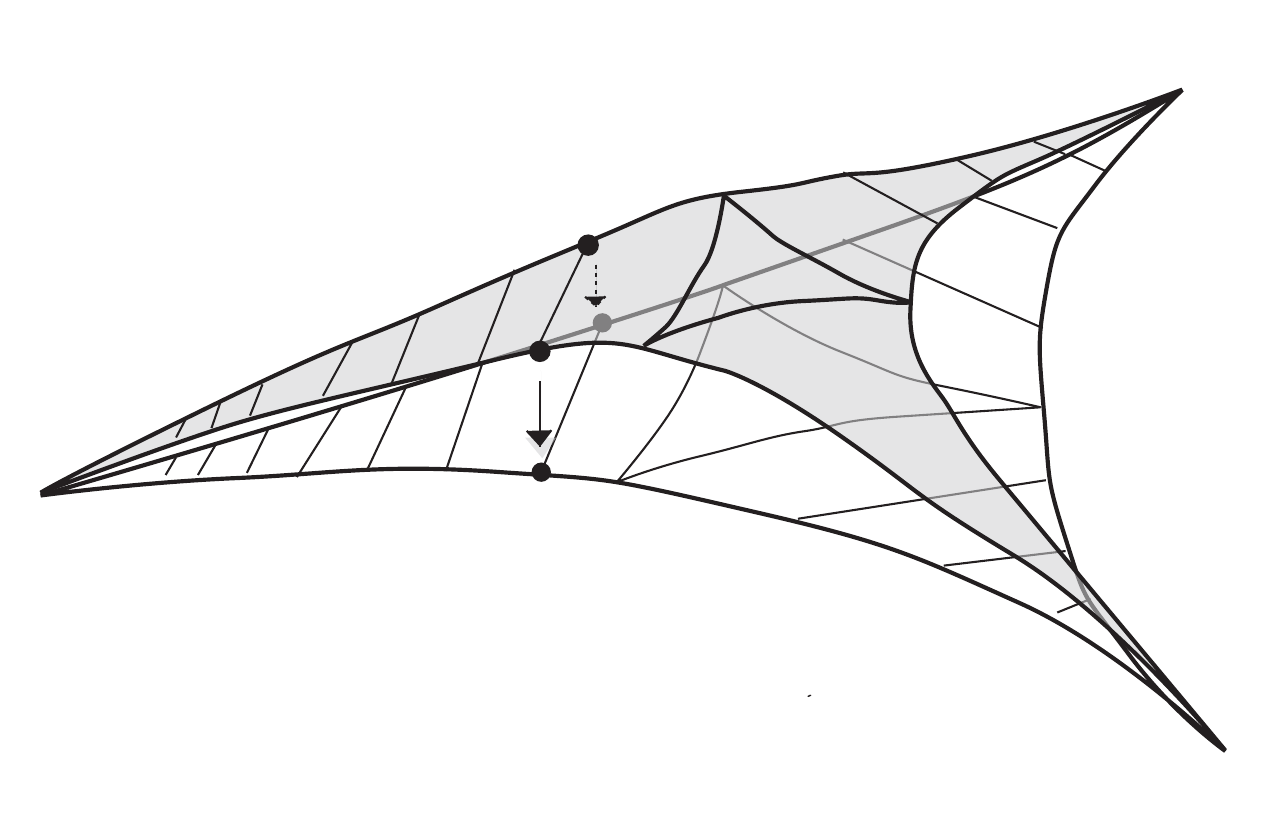}
      \end{overpic}
\caption{}\label{10-1-12}
\end{figure}

If $\ep > 0$ is sufficiently small, then the inscribed triangle in $\Del$ is contained in the $\ep$-thick part $\Del_\ep$ for all components $\Del$ of $\tau \minus \nu_{i, \tau}$.
Thus, by Lemma \ref{9-14-12no1}, we can further extend $\psi_i$ to  the inscribed triangle of $\Delta$ so that  $\psi_i$ restricts to an $\ep$-rough isometric embedding on $\Del_\ep$ into $\Del'$.

For every $\ep > 0$, if $i$ is sufficiently large, then
$\beta| \td{\nu}_{i, \tau}$ and $\beta_i| \td{\nu}_i$ are $\ep$-close pointwise via $\td{\psi}_i\cn \td{\tau} \to \td{\sigma}_i$ since $\psi_i| \nu_{i, \tau}$ is defined using  Corollary \ref{9-29-1}. 
In addition, for every $\zeta > 0$, if $\ep > 0$ is sufficiently small, then $\til{\tau}\, (\cong \h^2)$ is covered by the $\zeta$-neighborhoods of the $\ep$-thick parts of the ideal triangles of $\til{\tau} \minus \til{\nu}_{i, \tau}$.  
For every $\ep > 0$, if $i$ is sufficiently large, then $\beta$ and $\beta_i \cc \td{\psi}_i$ are also  $\ep$-close in the $C^0$-topology via $\td{\psi}$.

For very $\ep > 0$, if $i$ is sufficiently large, then the $\ep$-neighborhood of $\nu$ contains $\nu_{i, \tau}$ in $\tau$.
In the complement of the $\ep$-neighborhood of $\td{\nu}$, $\beta$ and $\beta_i \cc \td{\psi}_i$ are totally geodesic.
We can in addition assume that $\beta$ and $\beta_i \cc \td{\psi}_i$  are $\ep$-close, moreover, in the $C^1$-topology, for sufficiently large $i$.
 
Note that this $C^1$-convergence is weaker than that in Theorem \ref{12-19}, since we have enlarged each $\nu_i$ to a maximal lamination. 
The $\ep$-neighborhood of the extended lamination $\nu_{i, \tau}$ may be bigger than the $\ep$-neighborhood $N_{i, \ep}$ of the original lamination $\nu_{i, \tau}$. 
However, since $\beta$ and $\beta_i \cc \til{\psi}_i$ are totally geodesic in the complement of $N_{i, \ep}$, it is easy to make it  $C^1$-convergence  there for sufficiently large $i$ by a small perturbation.
 
 Thus, it only remains to show:   
\begin{proposition}\Label{061313n1}
For every $\ep > 0$, if $i \in \N$ is sufficiently large, then $\psi_i\cn \cm \tau \to \sigma_i$ is an $\ep$-rough isometry.
\end{proposition}

\proof
Let $x$ be a point of $|\td{\nu}|$, and let $\ell$ be the leaf of $\td{\nu}$ containing $x$. 
Consider a (totally geodesic) hyperbolic plane $H$ of $\h^3$ transversally intersecting the geodesic $\beta(\ell)$ at $\beta(x)$.  
Then, by the transversality, there is a neighborhood $a$ of $x$ in $\beta^{-1}(H)$ homeomorphic to an arc,  which we call an {\it orthogonal arc} through $x$

If a sequence of leaves of $\td{\nu}$ converges to $\ell$, then accordingly their $\beta$-images are geodesics in $\h^3$ converging to $\beta(\ell)$ uniformly on compacts. 
Thus, for every $\ep > 0$, if $a$ is sufficiently short then,   for  a stratum $R$ of $(\h^2, \td{\nu})$ which intersects $a$,  the angle $\angle_{\h^3}(\beta(R), H)$ is $\ep$-close to $\angle(H, \beta(\ell))$.

Since $\lam$ has measure zero in $\tau$ and the pleated surface $\beta\col \h^2 \to \h^3$ preserves the length of paths,  the length of $a$ is equal to the total length of the arcs $a \minus |\td{\nu}|$.

In particular, $a$ intersects $\ell$ only at $x$ and it is ``transversal'' in the sense that there is  a $\del > 0$ such that, if $s$ is a geodesic segment in $\h^2$ with its endpoints on different components of  $a \minus x$, then $s$ intersects $\ell$ transversally at an angle of more than $\del$.

We have shown that $\beta_i \circ \td{\psi}_i$ converges to $\beta$ as $i \to \In$ uniformly everywhere in the $C^0$-topology  and pointwise almost everywhere in the $C^1$-topology.
By this convergence, if $i$ is sufficiently large and $\td{\psi}_i(a)$ intersects a stratum $R_i$ of $(\h^2, \til{\nu}_i)$, then $\beta_i(R_i)$ is transversal to $H$ and $\angle(\beta_i(R_i), H) > \del$ for some fixed $\del > 0$.

Therefore, for sufficiently large $i \in \N$,  there is an arc $b_i$ embedded in $\beta_i^{-1}(H)  \st \h^2$ such that  $\beta_i|b_i$ converges to $\beta|a$ in the $C^0$-topology uniformly. 
Note that, since $\nu_i$ on $\sigma_i$ and $\lam$ on $\tau$ have measure zero, $\beta_i|b_i$ and $\beta|a$ are almost everywhere smooth. 
Thus the  uniform convergence $\beta_i | b_i \to \beta| a$ is moreover in the $C^1$-topology almost everywhere. 
Similarly  the length of $b_i$ is the sum of the lengths of the segments of $b_i \minus |\td{\nu}_i|$.
\begin{lemma}\Label{061313n2}
$\length(b_i)$ converges to $\length(a)$ as $i \to \In$.
\end{lemma}

\begin{proof}

Since $\beta_i|b_i$ converges to $\beta|a$ and pleated surfaces $\h^2 \to \h^3$ preserve length, for every $\ep > 0$, we have  $\length(a)  < \ep + \length(b_i)$ for sufficiently large $i$.
Thus it suffices to show the opposite $\length(b_i)  < \ep + \length (a)$ for sufficiently large $i$.

Recall that $\psi_i\cn \tau \to \sigma_i$ is a marking-preserving homeomorphism taking $\nu_{i, \tau} \to \nu_i$. 
Since the endpoints of $a$ are in the complement of $\nu$, for sufficiently large $i$, the components of $\h^2 \minus \til{\nu}_{i, \tau}$ intersecting $a$ bijectively correspond to the components  of $\h^2 \minus \til{\nu}_i$ intersecting $b_i$.
Therefore, there is a homeomorphism $\eta_i\cn a \to b_i$ such that, if $\Delta$ and $\Delta_i$ are corresponding complementary ideal triangles of  $\td{\nu}_{i, \tau}$ and $\td{\nu}_i$, respectively, then $\eta_i$ takes the arc $a \cap \Delta$ to  the arc $b_i \cap \Delta_i$ homeomorphically.  

Let $\hat{a}_i$ be the union of arcs of $a \minus |\td{\nu}_{i, \tau}|$ intersecting  the $\ep$-thick part of $\h^2 \minus \td{\nu}_{i, \tau}$. 
Then $\hat{a}_i$ is union of finitely many disjoint arcs. 
Let $\check{a}_i = a \minus \hat{a}_i$. 
For every $\ep > 0$, if $i$ is sufficiently large, then $\psi_i$ is an $\ep$-rough isometry in the $\ep$-thick part of $\tau \minus \nu_{i, \tau}$. 
Therefore, if $i$ is sufficiently large, $\eta_i$ changes the total length of $\hat{a}_i$ by at most $\ep$. 

For all $i$, we have  $\Area(\tau) = \Area(\sigma_i)$. 
For all $\ep > 0$ and $\del > 0$,  if $i$ is sufficiently large, $\psi_i$ changes the total area of the $\ep$-thick part $\tau \minus \nu_{i, \tau}$ by at most $\del$.

There is a $\theta > 0$ such that,  for sufficiently large $i$, 
\begin{itemize}
\item If $\Delta$ is  a component of  $\h^2 \minus \td{\nu}$ and $a$ intersects a boundary geodesic $\ell$ of $\Delta$,  then the angle between $a \cap \Delta$ and $\ell$  is at least $\theta$.
\item   if $\Delta_i$ is  a component of  $\h^2 \minus \td{\nu}_i$ and a boundary geodesic $\ell_i$  of $\Delta_i$ intersects $b_i$, then the angle between $b_i \cap \Delta_i$ and $\ell_i$ is  at least $\theta$.  
\item If $\Delta_{i, \tau}$ is  a component of  $\h^2 \minus \td{\nu}_{i, \tau}$ and $a$ intersects a boundary geodesic $\ell$ of $\Delta_{i, \tau}$,  then the angle between $a \cap \Delta_{i, \tau}$ and $\ell$  is at least $\theta$.
\end{itemize}
For every $\zeta > 0$, if $i$ is sufficiently large, then the $\psi_i$ takes the $\ep$-thick part of $\tau \minus \nu_{i, \tau}$  into the $(\ep - \zeta)$-thick part of $\sigma_i \minus \nu_i$ and the $\ep$-thin part of $\tau \minus \nu_{i, \tau}$ maps into $(\ep + \zeta)$-thick part of $\sigma \minus \nu_i$.
Therefore, for every $\del > 0$, if $\ep > 0$ is sufficiently small and  $i \in \Z_{>0}$ is sufficiently large, then the length of  $\check{a}_i$ is $\del$-close to the length of  $\eta_i(\check{a}_i)$, since otherwise $\psi_i$ must increase the total area of the $\ep$-thin part of $\tau \minus \nu$ some definite amount, such that $\Area(\sigma_i) > \Area(\tau)$; this is a contraction.
Therefore for every $\ep > 0$, if $i$ is large enough,  $\length(b_i) < \length(a) + \ep$.
\end{proof}

\begin{lemma}\Label{061413n2}
For  every $p \in \td{\tau}$ and  $\ep > 0$, there is a neighborhood $U$ of $p$ in $\td{\tau}$ such that, if $i \in \N$ is sufficiently large, then $\til{\psi}_i(U) \st \td{\sigma}_i$ has diameter less than $\ep$.
\end{lemma}

\begin{proof}
First suppose that $p$ is in  $\til{\tau} \minus \td{\nu}$. 
Let $\Delta$ is the component of $\til{\tau} \minus \til{\nu}$ containing $p$. 
Then take a sufficiently small closed ball centered at $p$ so that it is contained in $\Delta$. 
Let $U$ be the interior of the closed ball, which is an open ball centered at $p$.
Then for sufficiently large $i$, $U$ is  contained also in a component $\Delta_i$ of $\til{\sigma}_i \minus \til{\nu}_{i, \tau}$.
There is a $\del > 0$ such that $U$ is contained in the $\del$-thick part of $\Delta_i$. 
Thus, for every $\ep > 0$, if  $i$ large enough,  $\psi_i$ is an $\ep$-rough isometry near $p$. 
Thus if $U$ is a sufficiently small neighborhood of $p$, then $\td{\psi}_i(U)$  has diameter less than $\ep$.

Next suppose that $p$ is on a leaf $\ell$ of $\td{\nu}$.  
Then we construct a small ``rectangular'' neighborhood bounded by geodesic segments disjoint from $\til{\nu}$ and curves, as above, mapping into hyperbolic planes orthogonal to $\beta(\ell)$  by $\beta$. 
For $\del > 0$, let $x_1$ and $x_2$ be the points on $\ell$  that have distance $\del$ from $p$, so that $p$ bisects the geodesic segment $[x_1, x_2]$.
Let $H_1$ and $H_2$ be the hyperbolic planes in $\h^3$ that are orthogonal to $\beta(\ell)$ at $\beta(x_1)$ and $\beta(x_2)$, respectively. 
Given $\del > 0$, let $a_i$ be an orthogonal curve on $\til{\tau}$ passing through $x_i$ for each $i = 1,2$ such that
\begin{itemize}
\item $\length_{\til{\tau}}(a_i) < \del$,
\item  $\beta(a_i)$ is contained in $H_i$. 
\item the corresponding endpoints of $a_1$ an $a_2$ are in the same component of $\til{\tau} \minus \til{\nu}$.
\end{itemize}
By the third condition,  the corresponding endpoints of $a_1$ and $a_2$ are in the complements of $\til{\tau} \minus \til{\nu}$.
Thus let  $b_1$ and $b_2$ be geodesic segments in $\til{\tau} \minus \til{\nu}$ that connect the corresponding endpoints of $a_1$ and $a_2$.

Then   $\length(b_1) \to 0$ and $\length(b_2) \to 0$ as $\del \to 0$.
Then let $U_{\del}$ be the rectangular neighborhood of $p$ bounded by $a_1, a_2, b_1, b_2$, so that $p \in U_\del$.

We claim that, for every $\ep > 0$,  the diameter of  $\psi_i(U_{\del})$ is less than $\ep$, if $\del \to 0$ is sufficiently small and $i$ is sufficiently large. 
Since $\psi_i$ are homeomorphisms, it is suffices to show that the $\psi_i$-images of the edges $a_1, a_2, b_1, b_2$ have length less than $\ep$. 

Since $\length(a_1), \length(a_2) < \del$, by Lemma \ref{061313n2}, if $\del > 0$ is sufficiently small and $i$ is sufficiently large, then $\psi_i(a_1)$ and $\psi_i(a_2)$ have length less than $\ep$.

The geodesic segments $b_1$ and $b_2$ are disjoint from $\til{\nu}_{i, \tau}$ for sufficiently large $i$. 
Therefore, for every $\ep > 0$, the restrictions of $\psi_i$ to $b_1, b_2$ are $\ep$-rough isometric embeddings for sufficiently large $i$. 
Hence, if $\del > 0$ is sufficiently small and $i$ is sufficiently large, then $\psi_i(b_1)$ and $\psi_i(b_2)$ have length less than $\ep$.
\end{proof}

\begin{proposition}\Label{061413n1}
For every $p, q \in \td{\tau} (\cong \h^2)$ and $\ep > 0$, if $i \in \N$ is sufficiently large, then 
$$-\ep < \length_{\til{\tau}} [p, q] - \length_{\til{\sigma}} [\psi_i(p), \psi_i(q)] < \ep.$$
\end{proposition}
\begin{proof}

First Suppose that $p$ and $q$ are in the interior of a single stratum $\Delta$ of $(\til{\sigma}, \td{\nu})$. 
Then the assertion holds true since, given $\ep > 0$,  $\td{\psi}_i$ is a $(1 + \ep, \ep)$-quasiisometric embedding on $\Delta$ for sufficiently large $i$ .

Second suppose that $p$ and $q$ are contained in a single leaf  $\ell$ of $\til{\nu}$.
For every $\ep > 0$, if $i$ is sufficiently large, then,  since $\angle_\tau(\lam, \nu_{i, \tau}) \to 0$, we can
pick orthogonal arcs $a_i$ from $p$ and $a_i'$ from $q$  on $\til{\tau}$ such that $\length(a_i), \length(a_i') < \ep$ and, letting $r_i$ and $s_i$ be the other endpoints of $a_i$ and $a_i'$, such that $r_i$ and $s_i$ are in a single stratum of $(\til{\tau}_i,\til{\nu}_{i, \tau})$,   using Lemma \ref{061313n2}.
Let $b_i$ and $b_i'$ be the arcs on $\til{\sigma}_i$  that correspond to $a_i$ and $a_i'$, as discussed just before Lemma \ref{061313n2}, so that $\beta_i|b_i$ and $\beta_i| b_i'$ are $\ep$-close to $\beta|a_i$ and $\beta|a_i'$, respectively, in hyperbolic planes orthogonal to $\beta(\ell)$.
Then we can in addition assume that 
$\length(b_i), \length(b_i') < \ep$. 
Let $c_i = [\psi_i(r_i), \psi_i(s_i)]$, which is contained in a stratum of $(\til{\sigma}_i, \til{\nu}_i)$. 
Therefore, if $i$ is sufficiently large, then  $| \length (c_i)  - \length[r_i,s_i]  | < \ep$.
Hence, for every $\ep > 0$, since we can assume that the lengths of $a_i, a_i', b_i, b_i'$ are less than $\ep$ for sufficiently large $i$, we have $- \ep < \length [p, q] - \length [\psi_i(p), \psi_i(q)]  < \ep$.

Last suppose that $p, q$ are in different strata, so that $[p, q]$ transversally intersects $\td{\nu}$. 
Since $\beta_i \cc \td{\psi}_i$ converges to $\beta$ as $i \to \In$ and $\beta, \beta_i$ preserve length of curves, thus  $\length[p, q] < \ep +  \length[\psi_i(p), \psi_i(q)]$ for sufficiently large $i$.

For each $x \in [p,q] \cap \td{\nu}$,  let $\ell_x$ be the leaf of $\td{\nu}$ containing $x$.
Then, as discussed above, there is an orthogonal $a_x$ passing through $x$ so that $\beta(a_x)$ is contained in the hyperbolic plane, $\h^3$, orthogonal to the geodesic  $\beta(\ell_x)$ at $\beta(x)$.
We can in addition assume that the endpoints of $a_x$ are in the complement of $\til{\lam}$.

Next, using orthogonal curves, we pick a curve approximating the geodesic segment $[p,q]$ that intersects $\nu$ almost ``orthogonally''. 
Namely, take finitely many points $x_1, \dots, x_n$ on $[p, q] \cap \til{\nu}$, and pick orthogonal curves $a_1, \dots, a_n$ so that an endpoint of $a_k$ and an endpoint of $a_{k + 1}$ are in the interior of a single stratum of $(\til{\tau}, \til{\nu})$ for each $k$. 
Then let $c_k$ be the geodesic segments connecting the endpoints. 
Taking an union of such $a_j$ and $c_k$, we can construct a curve $\ap$ connecting $p$ to $q$, such that $\ap \cap \nu \sub \cup_{j = 1}^n a_j$ and that $\ap \minus  \cup_{j = 1}^n a_j$ is a union of disjoint geodesic segments.  
For every $\ep > 0$, taking large $n$ so that the orthogonal curves  $a_j$ are sufficiently short, we can  in addition  assume that 
\begin{itemize}
\item $\ap$ is $\ep$-close to $[p,q]$ in the Hausdorff metric,
\item $\Sigma_j \length(a_j) <\ep$, and
\item $-\ep <  \length [p,q] - \Sigma_i \length(c_k) < \ep$.
\end{itemize}
(For the last assertion, consider the nearest point projection of $\ap$ to $[p,q]$).
Thus, for every $\ep > 0$, there is such an approximating curve $\ap$ with $-\ep < \length(\ap) - \length[p,q] < \ep$.
Therefore, in order to show $\length[p, q] + \ep >    \length[\td{\psi}_i(p), \td{\psi}_i(q)]$  for sufficiently large $i$,  it suffices to find a curve $\ap_i$ on $\td{\sigma}_i$ connecting $\td{\psi}_i(p)$ to $\td{\psi}_i(q)$ such that $-\ep < \length(\ap_i) - \length(\ap) < \ep$.

For each orthogonal segment $a_j$,  as defined for Lemma \ref{061313n2}, there is a corresponding curve $b_{i, j}$ on $(\h^2, \td{\nu}_i)$, such that $\beta|a_j$ and $\beta_i| b_{i,j}$ are contained in a single hyperbolic plane and $\beta_i| a_{i,j}$ converges to $\beta| a_j$ as $i \to \In$.
Then, by  Lemma \ref{061313n2},  $\length(b_{i,j}) \to \length(a_j)$ as $i \to \In$. 
Thus $\Sigma_j \length(b_{i,j}) < \ep$ for sufficiently large $i$. 
Then, for each $j$, we can connect the endpoints of $a_{i,j}$ and $b_{i, j+1}$ by geodesic segments $c_{i,j}$ in the complement of $\td{\nu}_i$, to obtain a curve $\ap_i$ connecting $\td{\psi}_i(p)$ and $\td{\psi}_i(q)$.
Then, for every $\ep > 0$, if $i$ is sufficiently large, then the endpoints of  $c_{i, j}$ are $\ep$-close to the $\td{\psi}_i$-image of the endpoints of $c_j$ for all $j$. 
Since $\td{\nu}_{i,\tau} \to \td{\nu}$,  the segment $c_ j$ is disjoint from  $\td{\nu}_{i, \tau}$ for sufficiently large $i$.
Therefore  $\length(c_{i, j})$ is $\ep$-close to  $\length(c_j)$ for all $j$. 
Since $\ep > 0$ is arbitrary,  $-\ep < \length(\ap) - \length(\ap_i) < \ep$ for sufficiently large $i$.  
\end{proof}
Proposition \ref{061313n1} immediately follows from Proposition \ref{061413n1} and Lemma \ref{061413n2}.
\Qed{061313n1}

\section{Rectangular projective structures}\Label{LC}   

\subsection{Projective structures on rectangle supported on cylinders}\Label{10-5-1}

Let $c$ be a round circle on $\rs$. 
A geodesic $g$ in $\h^3$  is  an  \textit{axis} of $c$ on $\rs$ if $g$ is orthogonal to the (totally geodesic) hyperbolic plane in $\h^3$ bounded by $c$.

Let $\mA$ be a \textit{round cylinder} in $\rs$, that is, $\mA$ is bounded by disjoint round circles $c_{-1}$ and $c_1$.
Then the \textit{axis} of $\AA$ is the unique geodesic in $\h^3$ that is orthogonal to both hyperbolic planes bounded by $c_{-1}$ and $c_1$.
Then, there is a unique foliation ${\mF}_{\mA}$ of  ${\mA}$  given by the continuous family of  round circles, $\{c_t\}_{t \in [-1, 1]}$, sharing the axis $g$.
We  call it the \textit{circular foliation} on $\mF_{\mA}$.
Then each round circle $c_t$ has a smooth metric invariant under by elliptic isometries of $\h^3$ fixing $g$.
 It is unique up to scaling, and thus we normalize it so that the length of $c_t$ is $2 \pi$ for all $t \in [-1,1]$ (\textit{canonical metric}).

\begin{definition}\Label{10-2-1}
Let $C = (f, \rho)$ be a projective structure on a simply connected surface $F$.
(In particular $\rho$ is trivial.)
Let $e$ be a simple curve on $C$. 
Then we say that $e$ is \underline{supported} on the round cylinder ${\mA}$ if $f$  embeds $e$ properly into ${\mA}$ so that $e$ transversally intersects all leaves $c_t$ of $\mA$. 
\end{definition}

Let $R$ be a rectangle, and let $e_1, e_2, e_3, e_4$ denote the edges of $R$, cyclically indexed along  $\pt R \,(\cong \s^1)$; Figure \ref{fRectangle}. 
A projective structure $C = (f, \rho)$ on $R$ is \textit{supported} on the round cylinder ${\mA}$
if  
\begin{itemize}
\item[ (i)] $f$ immerses $e_1$ and $e_3$ into $c_{-1}$ and $c_1$, respectively, and
\item[(ii)] $e_2$ and $e_4$ are supported on ${\mA}$.
 \end{itemize}
 Then we say that $C$ is \textit{supported} on the round cylinder ${\mA}$ and  \textit{bounded} by the arcs $f| e_2$ and $f| e_4$  supported on  $\mA$.

\begin{figure}
\begin{overpic}[scale=.8,
] {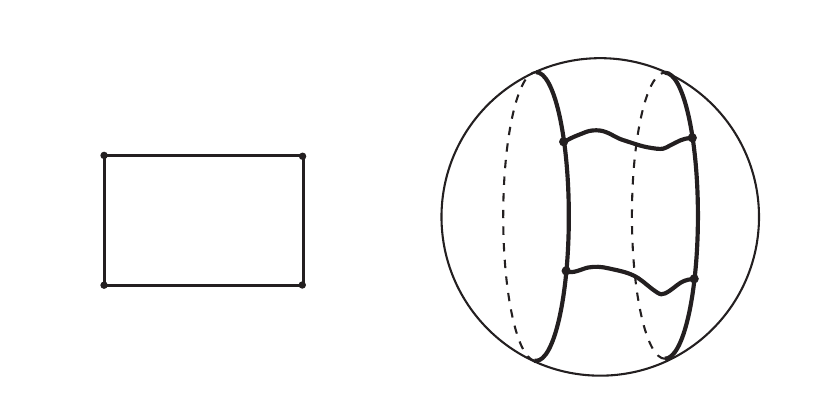}

        \put(7,24 ){$e_1$}  
        \put(23, 13){$e_2$}  
        \put( 38, 24){$e_3$}  
        \put( 23, 34){$e_4$}  
          \put( 23,23 ){$R$}  
        \put(63 ,24 ){$c_{-1}$}  
        \put(85 , 24){$c_{1}$}  
        \put( 71,13 ){$m_2$}  
        \put(71, 36){$m_4$}  
      \end{overpic}
\caption{}\label{fRectangle}
\end{figure}

Then, if $C$ is supported on $\AA$, we can pull back, via $f$, the circular foliation ${\mF}_{\mA}$ on $\mA$ to a circular foliation $\FF_{C}$ on the rectangle $C$. 
Each leaf of $\FF_C$ immerses into a closed leaf of $\FF_\AA$,  and thus it has a metric obtained by  pulling back the canonical metric of the closed leaf. 
Then we say that the \textit{height} of $C$ is $\ep$-close to $W$ for some $W > 0$, if every leaf of $\FF_C$ has length $\ep$-close to $W$.

 \subsection{Grafting a rectangle  supported on a cylinder} (Compare \cite[\S 3.5]{Baba12}.)
Let $C$ be a projective structure on the rectangle  $R$ supported on the round cylinder $\mA$ as above.
Let $m$ be a simple arc on $C$ supported on $\mA$.
Then $m$ is an arc properly embedded in $\mA$.
Then, similarly to grafting a projective surface along a loop (\S \ref{grafting})), we can combine two projective structures $C$ and $\mA$, by cutting and pasting along $m$, and obtain a new projective structure on  $R$ supported on $\mA$. 
Namely,  we can pair up the boundary arcs of  $C \sm m$ and the boundary arcs of $\mA \sm m$ and isomorphically identify them to create a new projective structure on $R$ supported on $\AA$.
We call this operation the {\it grafting} of $C$ along $m$ and denote this resulting projective structure by $\Gr_m(C)$.
We call $m$ an {\it admissible} arc on $C$.
If there is a multiarc $M$ on $C$ consisting of arcs supported on $\mA$ ({\it admissible multiarc}), then we can graft $C$ along all arcs of $M$ simultaneously and obtain a new projective structure on $R$ supported on $\AA$.
We accordingly denote it by  $\Gr_M(C)$.

\begin{lemma}\label{8-11_1}
Let $C_1$ and $C_2$ be projective structures on a rectangle $R$.
Suppose that they are supported on the same round cylinder and bounded by the same pair of  arcs supported on the cylinder.  
Then, we have either $C_1 = \Gr_M(C_2)$ or $C_2 = \Gr_M(C_1)$ for some admissible multiarc $M$.
Furthermore, the multiarc $M$ is unique up to an isotopy of $M$ on $R$ through admissible multiarcs.

Moreover the number of arcs of $M$ times $2\pi$ is equal to the length difference of the corresponding vertical edges of $C_1$ and $C_2$. 
\end{lemma}
\begin{proof}
Let $\mA$ be the round cylinder supporting $C_1$ and $C_2$.
Let $f_1\cn R \to {\mA}$ and  $f_2\cn R \to {\mA}$ be the developing maps of $C_1$ and  $C_2$, respectively. 
Let $\td{{\mA}}$ be the universal cover of ${\mA}$  and  $\Psi\cn \td{{\mA}} \to {\mA}$ be the universal covering map.
Let $m_2$ and  $m_4$ be the simple arcs properly embedded in $\mA$ that bound both $C_1$ and $C_2$ so that $m_2 = f_1(e_2) = f_2(e_2)$ and  $m_4 = f_1(e_4) = f_2(e_4)$.
Pick a lift $\td{m}_4$ of $m_4$ to $\td{{\mA}}$.
Then, for each $k = 1, 2$,  $f_k\cn R \to {\mA}$ uniquely lifts to  $\td{f}_k\cn R \to \td{{\mA}}$ so that  $f_k = \Psi \cc \td{f}_k$ and $\td{f}_k$ embeds $e_4$ onto $\td{m}_4$.
Clearly $\td{f}_k$ is an embedding (although $f_k$ may {\it not} be). 
We see that $\td{f}_k(e_2)$ is a lift of $m_2$ to $\td{{\mA}}$.
Since  projective structures have fixed orientation, $\td{f}_1(e_2)$ and $\td{f}_2(e_2)$ are in the same component of $\td{{\mA}} \sm \td{m}_4$.
If  $\td{f}_1(e_2) = \td{f}_2(e_2)$, then clearly $C_1 = C_2$.
If  $\td{f}_1(e_2) \neq \td{f}_2(e_2)$, then, without loss of generality,  we can assume that $Im(\td{f}_2)$ is a proper subset of $\Im(\td{f}_1)$,  if necessary, by exchanging $C_1$ and $C_2$.
Thus we can naturally regard $\Im(\td{f}_1) \sm \Im(\td{f}_2)$ as  a  projective structure on a rectangle  supported on ${\mA}$, where its developing map is the restriction of  $\Psi$  to $\Im(\td{f}_1) \sm \Im(\td{f}_2)$.
Then its supporting arc are both $m_4$.
Let $d$ be the generic degree of  the developing map of  $\Im(\td{f}_1) \sm \Im(\td{f}_2)$ to $\AA$ (i.e. the degree over a point in ${\mA} \sm m_4$).
Note that $$2\pi d = \length f_1 (e_1) - \length f_2(e_1) =  \length f_1 (e_3) - \length f_2(e_3).$$

Note that the grafting along an admissible arc on $C_2$ increases the length of its vertical edges by $2\pi$. 
Thus, if $M$ is the union of $n$ disjoint admissible arcs on $C_2$, then the length of vertical edges increases by $2\pi n$
Therefore  $Gr_M(C_2) =C_1$ if and only if $n = d$. 
\end{proof}


\subsection{Fat traintracks}\Label{S:traintracks}

Given a rectangle, pick a pair of opposite edges and call them \textit{horizontal edges} and the other edges  \textit{vertical}, to distinguish them. 
\begin{definition}[\cite{Kapovich-01, Penner-Harer-92}]\Label{11-18-1}
Let $F$ be a topological surface. 
A  \linebreak[4] \textit{(fat) traintrack} $T$  on $F$ is a collection of rectangles $R_j\, (j \in J)$ embedded in $F$, called \ul{branches}, such that 
\begin{itemize}
\item $\{R_j\}_{j \in J}$ is locally finite,
\item  branches can intersect only along their vertical edges, and  
\item if $e$ is a vertical edge then either
\begin{itemize}
\item $e$ is (homeomorphically) identified with another vertical edge of a rectangle, 
\item $e$ is  a union of two other vertical edges, which share an endpoint, of some rectangles, or
\item  $e$ is identified with  a segment of another vertical edge containing an endpoint.  
\end{itemize} 
\end{itemize}
Then let $|T| \sub S$ denote the union of the rectangles $R_i$. 
\end{definition}

Let $T = \{R_j\} _{j \in J}$ denote a traintrack on $F$, where $R_j$ are its branches. 
The vertical edges of the branches $R_j$ decompose $T$ into the branches.
The boundary of $|T|$ is the union of the horizontal edges.
If a point of $\pt |T|$ is the common end point of the second possibility for $e$, then it is called a \textit{switch point}.  

Let $\ld$ be a lamination on  $F$.
Then the traintrack $T = \{R_j\}_{j \in J}$ \textit{carries} $\ld$, if
\begin{itemize}
\item the interior of $|T|$ contains $\ld$ and
\item each leaf $\ell$ of $\lam$  is transversal to the vertical edges of $T$ and each component of  $\ell \cap R_j$ is an arc connecting the vertical edges of $R_j$ for each $j \in J$,
\end{itemize}
If, in addition, $R_j \cap \ld \neq \emptyset$ for all $j \in J$, then we say $T$ \textit{fully carries} $\ld$. 
Suppose that $L = (\ld, \mu)$ is a measured lamination carried by $T$.
The \textit{weight} of $L$ on a branch $R_j$ is the transversal measure $\mu$ of a vertical edge of $R_j$; 
we denote it by $\mu(R_j)$.
The weights of branches satisfy some simple equations, called {\it switch conditions}. 

Suppose that $T$ carries two measured laminations $L_1$ and $L_2$.
Then the weights $L_1$ and $L_2$ are nonnegative real numbers on each branch of $T$. 
Thus there is a unique measured lamination $L_1 + L_2$ carried by $T$ such that, the weight of $L_1 + L_2$ on $R_j$ is the sum of the weights of $L_1$ and $L_2$ on $R_j$ for each $j$. 
Suppose that the weight of $L_1$ is at least the weight of $L_2$ on each branch of $T$. 
Then similarly there is a unique measured lamination $L_1 - L_2$ carried by $T$ such that  the weight of $L_1 - L_2$ on $R_j$ is the weight of $L_1$ minus the weight of $L_2$ on $R_j$ for each $j$.

\note{p1-1}
Given $\ep > 0$,  we say $L_1$ is {\it $\ep$-close} to $L_2$ on $T$, if the weight of $L_1$ is $\ep$-close to that of $L_2$ on each branch of $T$. 
We say that  $L_1$ is a {\it good approximation} of $L_2$ on $T$, if $L_1$ is $\ep$-close to $L_2$ for a sufficiently small $\ep > 0$.

We remark that, if a traintrack $T = \{R_j\}_j$ has  weights on its branches satisfying the switch conditions, it corresponds to a unique measured lamination. 
Indeed there is a measured foliation $\FF$ of $|T|$ such that $\FF$ foliates each branch $R_i$ by arcs connecting its vertical edges and the transversal measure of $R_i$ given by $\FF$ realizes  the  weight of $R_i$.
Then by ``straightening" leaves of $F$ fixing a hyperbolic metric on $F$, we obtain a measured (geodesic) lamination. 

Moreover the above addition and subtraction respect the piecewise linear structure on the space of measured laminations. 
In particular,  the set of all possible weight-systems on a traintrack is a  piecewise linear cone in a vector space.

Let $\tau$ be a hyperbolic structure on $F$. 
Then the traintrack $T$ is  \textit{smooth} if all branches are smooth (i.e. the edges of its branches are smooth) and $\pt |T|$ is smooth except at the switch points. 

\begin{definition}\Label{11-12-2}
For $\ep > 0$, a smooth traintrack $T = \{R_j\}_{j \in J}$ on $\tau$ is called \underline{$\ep$-nearly straight} if each branch $R_j$ is $(1 + \ep)$-bilipschitz to some Euclidean rectangle and, at each switch point, the angle of $\pt |T|$ is less than $\ep$.

For $\ep, K > 0$,  $T$ is \underline{$(\ep, K)$-nearly straight}  if $T$ is $\ep$-nearly straight and, for each branch $R_i$ of $T$,   $K$ is less than the length of the horizontal edge of such a Euclidean rectangle corresponding to  $R_i$ (which we call the \underline{length} of $R_i$).
\end{definition}
Such a nearly straight (non-fat) traintrack is introduced in \cite[ch. 8]{Thurston-78}; see also \cite{Brock-00, Minsky92}].
If $\ep > 0$ is sufficiently small,  each branch of an $(\ep, K)$-nearly straight traintrack is hausdorff close to an almost straight curve. 

\section{Decomposition of projective structures by  traintracks}\Label{sec:decomposition}

\begin{definition}
Let $R$ be a branch of  a traintrack $T$ in a projective structure $C = (f, \rho)$  on $S$.
(Recall that a branch is a rectangle.)
Then the branch $R$ is \underline{supported} on a round cylinder $A$ on $\rs$
if  $A$ supports the restriction of $C$ to $R$ so that horizontal edges of $R$  are supported on $A$.
\end{definition}

\begin{lemma}\Label{admissible}
Let $T = \{ R_k \}$ be a traintrack on a projective surface $(S, C)$ such that  each branch $R_k$ is supported on a round cylinder \linebreak[4](\ul{admissible traintrack}).
Note that the circular foliations on $R_k$ yield a circular foliation on $|T|$. 
Then, if a loop $\ell$ is carried by  $T$  and transversal to the circular foliation on $|T|$, then $\ell$ is admissible.
 
\end{lemma}

\begin{proof}
Let $\td{\ell}$ be a lift of $\ell$ to $\td{S}$.
Let $\td{T}$ be the lift of $T$ of $\td{S}$.
Let $\td{R}_{k \in \Z}$ denote the branches of $T$ intersecting $\td{\ell}$, so that $\td{R}_{k}$ and $\td{R}_{k + 1}$ are adjacent.
Let $C = (f, \rho)$, where $f$ is the developing map  and $\rho$ is the holonomy. 
Then $f$ injects $\ell \cap \td{R}_k$ for each $k$.
The supports of $\td{R}_{k}$ have disjoint interiors. 
Thus $f$ embeds $\td{\ell}$ into $\rs$. 
Since this embedding extends the endpoint of  $\td{\ell}$, taking them to distinct points, $\rho(\ell)$ is loxodromic. 
\end{proof}

The following proposition will yield the traintracks on projective surfaces in Theorem A and Theorem B.

\begin{proposition}\Label{6-28-12no1}
Let $C_i \cong (\tau_i, L_i),~   i \in \Z_{> 0},$ be a sequence of projective structures on $S$ with fixed holonomy $\rho$, and let $f_i$ be the developing map of $C_i$.
Let $\LL_i$ be the canonical lamination on $C_i$, which  descends to $L_i$ by the collapsing map $\kap_i \cn C_i \to \tau_i$.

Suppose that $\tau_i$ converges to $\tau_\In$ in $\TT$ as $i \to \infi$, and there is a geodesic lamination $\lam_\In$ (on $\tau_\In$ ), such that, for every $\ep > 0$,  the $\ep$-neighborhood of $|\lam_\In|$ contains $|L_i|$ for sufficiently large $i$. 

Then there are a traintrack $\mathcal{T} = \{\RR_k\} _{k = 1}^n$ on $S$  (which depends only on $\rho, \tau_\In, |\lam_\In|, \ep$) and a homeomorphism  $\phi_i\cn S \to C_i$ for every $i$, such that for every $\ep > 0$ if $i, j$ are sufficiently large, then
\begin{itemize}
\item[(I)]  $\phi_i(\mathcal{T})$ carries $\LL_i$, and $\phi_i(\mathcal{T})$ descends, by $\kap_i$, to an $(\ep, K)$-nearly straight traintrack $T_i$ on $\tau_i$ carrying both $L_i$ and $\lam_\In$, where $K > 0$ is an arbitrarily fixed constant that is less than one third of a shortest closed leaf on $\lam_\In$ (if $\lam_\In$ contains no closed leaves, then $K > 0$ is arbitrarily).

\item[(II)]
$f_i$ and $f_j$ induce an isomorphism from  $C_i \sm \phi_i(\mathcal{T})$  to $C_j \sm \phi_j(\mathcal{T})$ as projective surfaces; thus we can assume that  $\phi_i \cc \phi_j\iv\cn C_j \to C_i$ induces this isomorphism.

\item[(III)]
\begin{itemize}
\item[(i)] For each branch $\td{\RR}_k$  of $\td{\mathcal{T}}$, there exists a round cylinder $\til{\AA}_k$ on $\rs$ that supports its corresponding rectangle $\td{\phi}_i(\td{\RR}_k)$ in $\td{C}_i$ for every sufficiently large $i$, where $\td{\phi}_i\cn \td{S} \to \td{C}_i$ is the lift of $\phi_i$.
\item[(ii)] Moreover, if $a$ is a vertical edge of $\til{\phi}_i(\td{\RR}_k)$ then the length of $a$ (\S \ref{10-5-1}) is $\ep$-close to the transversal measure of  $\phi_i(\RR_k)$ with respect to $\LL$ (\S \ref{S:traintracks}), where $\RR_k$ is the branch of $\mathcal{T}$ that lifts to $\td{\RR}_k$.  
\end{itemize} 
\end{itemize}
\end{proposition}

\begin{remark}
To be precise, by ``descends'' in (I), we mean that $\kap_i$ takes $\phi_i(\mathcal{T})$ to $T_i$ up to an $\ep$-small perturbation of vertical edges as given in Theorem \ref{6-17-12}. Yet  $\kap_i$ takes $|\phi_i(\mathcal{T})|  \st C$ exactly onto $|T_i| \st \tau_i$.
\end{remark}

In particular, by taking all $C_i$ to be a fixed projective structure, we obtain

\begin{corollary}\Label{AdmissibleTraintrack}
Let $C \cong (\tau, L)$ be a projective structure on $S$, and let $\lam$  be a geodesic lamination $\lam$ on $\tau$ containing $|L|$.
Let $\kap\col C \to \tau$ be its collapsing map.
Let $\Lambda$ be the lamination on $C$ that descends to $\lam$ by $\kap$. 
Then for every $\ep > 0$,  there is an admissible traintrack $\TT$ on $C$  carrying  $\Lambda$, so that it descends to an $\ep$-nearly straight track on $\tau$ by $\kap$ up to an $\ep$-small perturbation of vertical edges.  
\end{corollary}

The rest of \S \ref{sec:decomposition} is the proof of Proposition \ref{6-28-12no1}.

{\it Outline of the proof of Proposition \ref{6-28-12no1}.} 
Construct a nearly straight traintrack $T_\infi$ on $\tau_\infi$ carrying $\lam_\infty$ (Lemma \ref{8-2}).
Indeed $T_\infi$ yields all other traintracks in the proposition. 
There is a $\rho$-equivariant pleated surface realizing $(\tau_\infi, \lam_\infi)$, and  it is the limit of  the $\rho$-equivariant pleated surface for $C_i$ (see Lemma \ref{6-20-12no2}). 
By this convergence,  for sufficiently large $i$,  there is a  corresponding nearly straight traintrack $T_i$ on $\tau_i$.  
The traintrack $\mT_i$ in the proposition is obtained by pulling back $T_i$ by the collapsing map $\kap_i \col C_i \to \tau_i$ and perturbing of vertical edges a little bit ( see Proposition \ref{7-7-12no1}. 
The estimate of the lengths of vertical edges is given in \S \ref{11-12}.

\begin{lemma}\Label{6-20-12no2}
Let $C_i \cong (\tau_i, L_i)$ be a sequence of projective structures on $S$ with fixed holonomy $\rho$, such that $\tau_i$ converges to $\tau_\infi$.
For each $i$, let $\beta_i\col \h^2 \to \h^3$ be the $\rho$-equivariant pleated surface corresponding to $C_i$. 

Suppose that there is a geodesic lamination $\lam_\In$ on $\tau_\infi$ such that, given any $\ep > 0$, the $\ep$-neighborhood of $|\lam_\In|$ contains (the geodesic representative of) $|L_i|$ for all sufficiently large $i$.
Then there is a $\rho$-equivariant pleated surface $\beta_\In$ realizing the pair $(\tau_\In, \lam_\In)$,  and  $\beta_i$ converges to $\beta_\In$. 
This convergence is uniform in the $C^0$-topology on $S$ and uniform on compacts in the $C^1$-topology  in the complement of $| \lam_\infi |$.
\end{lemma}

\begin{remark}
In this lemma, there may be a sublamination of $\lam_\In$ that realizes $\beta$ as well.
\end{remark}

\proof
First we show that $\lam_\In$ is realizable by a $\rho$-equivariant pleated surface. 
The assumption on $\lambda_\In$ and $L_i$ implies that $\angle_{\tau_\In}(\lam_\In, L_i) \to 0$ as $i \to \In$. 
Thus, for every $\ep > 0$, if $i$ is sufficiently large, then  $\beta_i$ restricts to a $(1 + \ep)$-bilipschitz map on every leaf of $\til{\lam}_\infi$ on the universal cover of $\tau_i$ (Proposition \ref{8-28-1}). 

Since $\ep > 0$ is arbitrary, there a unique $\rho$-equivariant map $\beta_\In$ taking leaves of  $\td{\lam}_\In$ to geodesics in $\h^3$, so that  $\beta_i$ converges to $\beta_\If$ on $\td{\lam}_\If$  uniformly as $i \to \In$.
Let $\td{\lam}_i$ be the total lift of $\lam_i$ to $\h^2$.

\begin{claim}\Label{sQIcomplements}
For every $\ep > 0$, if $i$ is sufficiently large, then the restriction of $\beta_i$ to each component $R$ of $\h^2 \minus |\td{\lam}_\In|$ is a $(1 + \ep, \ep)$-quasiisometric embedding. 
\end{claim}

\begin{proof}
The proof is similar to arguments in  \cite[\S 7]{Baba-10}.
For every $\del > 0$,  if $i$ is sufficiently large, then, for each component $R$ of $\h^2 \minus |\til{\lam}_\infi|$,  the restriction $\beta_i | R$ is totally geodesic away from the $\del$-neighborhood of $\bdr R$.  
Moreover, if $\del > 0$ is sufficiently small and $i \in \Z_{> 0}$ is sufficiently large, then we can in addition assume that, for every geodesic segment $s$ in the $\del$-neighborhood of $\bdr R$ in $R$, either $\length_{\h^2}(s) < \ep$ or $\angle(s, \til{L}_i) $ is quite small, so that $\beta_i | s$ is a $(1 + \ep, \ep)$-quasiisometric embedding.  

Then the claim immediately follows. 
\end{proof}

The $\ep > 0$ is arbitrary in Claim \ref{sQIcomplements}, and therefore  $\beta_\In$, which is defined on $\pt R$, must extends to $R$ so that $\beta_\In | R$ is a totally geodesic isometric embedding.
Thus $\beta_i$ uniformly converges to $\beta_\In$ realizing $(\tau_\In, \lam_\In)$ in the $C^0$-topology.  
Since $\beta_i$ and $\beta_\infi$ are totally geodesic in the complements of $\lam_i$ and $\lam_\infi$, respectively, the convergence is, in the $C^\infi$-topology, uniform on compacts in the complement of $\til{\lam}_\infi$.
\Qed{6-20-12no2} 

\begin{remark}\Label{111112}
The bending map $\beta_i$ has a natural normal vector field at all points $x$ away from the lifts of closed leaves of $L_i$: Namely, 
it is the direction of  the ray from $\beta(x)$ to the $f_i$-image of the point on $\td{S}$ corresponding to $x$. 
Then the limit of this normal vector field yields a normal vector field on $\beta_i$ away from $|\td{\lam}_\In|$.
\end{remark}

\subsection{Construction of Traintracks}
\begin{lemma}\Label{8-2}
Let $|\nu|$ be a geodesic lamination on a hyperbolic surface $\sigma$ homeomorphic to $S$.
Then there exists a $K > 0$, such that, for every $\ep > 0$, there exists an $(\ep, K)$-nearly straight traintrack $T = \{R_j\} _j$ on $\sigma$ fully carrying $\nu$, such that if a vertical edge of $T$ intersects a leaf of $\nu$, then the angle is $\ep$-close to $\pi/2$.

If $\nu$ contains a closed leaf, then
we can take  $K > 0$ to be any number less than one third of  the length of the shortest closed  leaf of $\nu$ and, otherwise, we can take $K$ to be any positive number. 
\end{lemma}

Such a traintrack can be obtained by taking a $\del$-neighborhood of $|\nu|$ with sufficiently small $\del > 0$ and splitting it so that each branch has a certain amount of length; the details are left to the reader.

By Lemma \ref{6-20-12no2}, we have a $\rho$-equivariant pleated surface $\beta_\In$ realizing $(\tau_\In, \lam_\In)$.
Similarly to Thurston coordinates on projective structures on $S$,  the pair $(\tau_\In, \lam_\In)$ defines a projective structure $C_\In$ on $S \minus |\lam_\In|$. 
Indeed, since $\beta_\In$  is a locally totally geodesic embedding away from the total lift $\td{\lam}_\In$ of $\lam_\In$, it induces a $\rho$-equivariant developing map $f_\In$ from $\td{S}$ minus $|\td{\lam}_\In|$, so that $f_\In(x)$ projects orthogonally to the image of $\beta_\In(x)$ for all $x \in \til{S} \minus \til{\lam}_\infi$. 
In particular there is a natural embedding $\kap_\In$ of $C_\In$  onto  $\tau_\In \minus |\lam_\In|$.

For every $\ep > 0$, let $T_\In (= T_{\In, \ep})$ be an $(\ep, K)$-nearly straight traintrack on $\tau_\If$ given by Lemma \ref{8-2} carrying $\lam_\In$.
Then let $\mathcal{T}_\In = \kap^{-1}(|T_\In|)$ be the subset of $C_\In$, such that the closure of $C_\In \minus \mathcal{T}_\In$ is a compact subset of $C_\In$,  on which  $C_\In$ deformation retracts. 

Suppose that there is a nearly straight traintrack $T_i$ on $\tau_i$ carrying $L_i$.
 Then, since vertical edges of $T_i$ are transversal to $L_i$, thus  $\kap_i^{-1}(T_i) =: \mathcal{T}_i$ is a traintrack on $C_i$ carrying the canonical lamination $\LL_i$.  
Note that $\mathcal{T}_i$ and $T_i$ are the same traintrack as topological traintracks on $S$ and this identification is given by $\kap_i$. 
The measured laminations  $\LL_i$ and $L_i$ represent the same element on $\ML(S)$.
Then if $\RR$ and $R$ are corresponding branches of $\mathcal{T}_i$ and $T_i$, then the weight of $R$ given by $L$ is equal to the weight of $\RR$ given by $\LL_i$.  
In this sense, $(\mathcal{T}_i, \LL_i)$ is {\it isomorphic} to $(T_i, L_i)$  (as weighted traintracks).

Then we show (I) and (II):
\begin{proposition}\Label{7-7-12no1}
Let  $\ep > 0$.
Then, for  sufficiently large $i \in \N$, there exists a traintrack $T_i$ on $\tau_i$  isotopic to $T_\In$ on $\tau_\In$ as a topological traintrack on $S$, such that 
\begin{itemize}
\item[(i)] $T_i$ is $(\ep, K)$-nearly straight,
\item[(ii)] $T_i$ carries $L_i$,
\item[(iii)] there is a marking-preserving $\ep$-rough isometry from $\tau_\infi$ to $\tau_i$ that takes $|T_\In|$ to $|T_i|$, and
\item[(iv)] $C_\If \sm |\mathcal{T}_\If|$ is isomorphic to $C_i \minus |\mathcal{T}_i|$ (as projective surfaces) via their developing maps, where $\mathcal{T}_i$ the traintrack on $C_i$ that descends to $T_i$ via the collapsing map $\kap_i\cn C_i \to \tau_i$.

\end{itemize}
\end{proposition}

\begin{proof}
For each component $P$ of $\tau_\In \sm |T_\In|$, let $P'$ be the component of $\tau_\In \minus |\lam_\In|$ containing $P$.
For  each $i$,  let $\psi_i \cn \tau_\In \to \tau_i$  be a marking-preserving $\del_i$-rough isometry, obtained by Theorem \ref{12-19},  with its distortion $\del_i$ limiting to $0$ as $i \to \In$. 
Then since a small neighborhood of $|\lam_\In|$ contains $|\lam_i|$ for sufficiently large $i$, there is a corresponding component $P'_i$ of $\tau_i \minus |\lam_i|$ such that $\phi_i(P'_i)$ contains $P$. 
 Let $\td{P}'$ be a lift of $P'$ to $\h^2$.
Then $\beta_\In$ takes $\td{P}'$ isometrically into a (totally geodesic) hyperbolic plane in $\h^3$.
The ideal boundary of this hyperplane cuts $\rs$ into two round open balls. 
Then, one of those round balls is in the normal direction of $\beta_\In|\td{P}'$ (see Remark \ref{111112}).
Let $\td{Q}'$ be the region in this round ball that conformally projects onto  $\beta_\In(\td{P}')$ via the orthogonal projection to the hyperplane.
Since $P$ is a subset of $P'$,  let $\td{Q}$ be the regain in $\td{Q}'$ conformal to $\beta_\In(P)$ via the projection.

Similarly, $\beta_i$ isometrically embeds $\til{P}'_i$ into $\h^2$ in $\h^3$, and we let $\td{Q}'_i$ be the region in $\rs$ conformal to $\td{P}'_i$ via the orthogonal projection to this hyperplane.
Then, for sufficiently large $i$,  $\td{Q}'_i$ contains $\td{Q}$ since $\psi_i(\td{P}'_i)$ contains $\td{P}$ and  $\beta_i$ is sufficiently close to $\beta_\In$.
Then via the conformal isomorphisms  $\td{Q}' \cong \td{P}'$ and $\td{Q}_i \cong \td{P}_i$, we have an embedding $\td{\eta}_i\col \td{P} \to \td{P}'_i$.
Then  $\beta_i \cc \td{\eta}_i$ smoothly converges to  $\beta_\In$  as $i \to \In$.  
Thus, for every $\ep > 0$, if $i$ is sufficiently large, $\td{\eta}_i$ and  $\td{\psi}_i$ are $\ep$-close on $\td{P}$ (in the $C^\infi$ topology).

Moreover,  since $\beta_i \to \beta_\In$,  if $i$ is sufficiently large,  
then different complementary components of $|\td{T}_\In|$ have disjoint images in  $\td{\tau}_i \minus |\td{\lam}_i|$. 
Thus we have a conformal embedding $\td{\eta}_i\cn \td{\tau}_\In \minus |\td{T}_\In|  \to  \td{\tau}_i \minus |\td{\lam}_i|$ that commutes with the action of $\pi_1(S)$. 
 Then it descends to an embedding  $\eta_i\cn\tau_\In \minus |T_\In| \to \tau_i \minus |\lam_i|$. 
 Moreover, for every $\ep > 0$, if $i$ is sufficiently large, $\eta_i$ and $\psi_i$ are $\ep$-close on $\tau_\In \minus |T_\In|$.
 Since $\psi_i$ converges to an isometry as $i \to \In$,  therefore $\tau_i \minus \Im \eta_i$ enjoys a traintrack structure $T_i$ carrying $L_i$ (proving (ii)) that satisfies (i) and (iii) for sufficiently large $i$.
There is a unique isomorphic embedding of $C \sm |\mathcal{T}_\In|$ into $C_i \minus |\LL_i|$ compatible with their developing maps, such that it descends to $\eta_i$ via the collapsing maps of $C$ and $C_i$. 
Hence $T_i$ also satisfies (iv). 
\end{proof}

\subsection{Proof of Proposition \ref{6-28-12no1} (III) - (i).}
By the arguments above, we can assume that $T_\infi$  is $(\ep, K)$-nearly straight with the fixed constant $K > 0$ and sufficiently small $\ep > 0$.

Let $b$ be a switch point of $T_\In$.
Then, since $b$ is in the complement of $|\lam_\In|$, the pleated surface $\beta_\In\col \h^2 \to \h^3$ is smooth at each lift $\td{b}$ of $b$ to $\h^2$.
Thus there is a unique point  $\td{\tt b}$ on $\rs$ that orthogonally projects to $\beta_\In(\td{b})$ on  $\beta_\In$ from its normal direction (see Remark \ref{111112}).

Pick a round circle ${\tt c}(\td{b})$ on $\rs$ containing $\til{\tt b}$ so that the hyperplane bounded by ${\tt c}(\td{b})$ contains  $\beta_\In(\td{b})$ and so that it is ``{\it nearly orthogonal} to the traintrack of $\til{T}_\In$ near $\td{b}$'':
Namely, given any $\zeta > 0$, if $T_\infi$ is sufficiently straight (i.e. $\ep > 0$ is sufficiently small), then,  for each leaf $\ell$ of $\lam_\In$ passing through the vertical edge of $\td{T}_\In$ containing $b$,  the geodesic $\beta_\In(\ell)$ is 
 $\zeta$-nearly orthogonal to the hyperplane.
 For different lifts $\td{b}$ of $b$, we  $\rho$-equivariantly take such round circles ${\tt c}(\td{b})$.  
(Note that the upper bound of the number of branches points on $T_\infi$ depends only on the topological surface $S$.)

Then, since each branch of $\td{T}_\In$ has length at least $K$ and we can pick sufficiently small $\zeta > 0$,  for each branch $\td{R}_\infi$ of $\td{T}_\In$, letting $\td{b}_1$ and $\td{b}_2$ be the switch points on the different vertical edges of $\td{R}_\infi$, 
their corresponding round circles ${\tt c}(\td{b}_1)$ and ${\tt c}(\td{b}_2)$ are disjoint (in $\rs$).
Let $\AA(\td{R}_\infi)$ denote the round cylinder bounded by ${\tt c}(\td{b}_1)$ and ${\tt c}(\td{b}_2)$.
Then the convex hull, in $\h^3$, of $\AA(\td{R}_\infi)$ contains most of $\beta_\In (\td{R}_\infi)$.

Consider the two copies of $\h^2$ in $\h^3$ bounded by ${\tt c}(\td{b}_1)$ and ${\tt c}(\td{b}_2)$.
Then, for every $\zeta > 0$, if $\ep > 0$ is sufficiently small, the distance between these hyperplanes is at least  $K - \zeta$ for all branches $\til{R}_\infi$ of $\til{\mathcal{T}}_
\infi$.
Therefore the modulus of  $\AA(\til{R}_\infi)$ is at least $(K - \zeta)/ 2\pi$.

For all sufficiently large $i$, set $T_i = \{R_{i,j} \}$ to be the $(\ep, K)$-nearly straight traintrack on $\tau_i$ obtained by Proposition \ref{7-7-12no1}.
Note that, for different $i$, the traintracks  $T_i$ are isomorphic as smooth traintracks and those isomorphisms, restrict, for each $j$, to a diffeomorphism between corresponding branches $R_{i, j}$. 
Accordingly, set $\mathcal{T}_i = \{\RR_{i, j}\}_j$ to be the corresponding traintrack on $C_i$, so that $\kap_i$ maps $\RR_{i, j}$ to $R_{i, j}$ for each $j$. 
In addition there is a branch $R_{\infi, j}$ of $T_\infi$ corresponding to $\RR_{i, j}$ and $R_{i, j}$.

\begin{proposition}\Label{6-17-12}
For every $\zeta > 0$, if $\ep > 0$ is sufficiently small,  then, for  $i$ sufficiently large so that $T_i$ is $(\ep, K)$-nearly straight, we can isotope $\mathcal{T}_i$ on $C_i$ by a $\zeta$-small isotopy of the vertical edges of $\mathcal{T}_i$, so that,
if $R$, $\RR$ and $R_\infi$ are corresponding branches of $\td{T}_i$, $\td{\mathcal{T}}_i$ and $\til{T}_\infi$, respectively, then 
 \begin{itemize}
\item[(i)]  $(\mathcal{T}_i, \LL_i)$ remains isomorphic to $(T_i, L_i)$ as weighted traintracks,
 \item[(ii)]  each branch  $\RR$ of $\td{\mathcal{T}}_i$ is supported on the round cylinder $\AA(R_\infi)$, and the modulus of $\AA(R_\infi)$  is at least $(K - \zeta)/ 2\pi$.
 \item[(iii)]  both horizontal edges of $\RR$  intersect each leaf of the circular foliation of $\AA(R_\infi)$ at angles in $(\pi/2 -\zeta, \pi/2 + \zeta)$, and  
 \item[(iv)]  the isotopy moves each vertical edge of $\mathcal{T}_i$ at most $\zeta$ in the Hausdorff distance with respect to the Thurston metric on $C_i$. 
\end{itemize}
 \end{proposition}

 \proof
In this proof, we can assume that $i$ is sufficiently large. 
Let $R$ and $\RR$ be corresponding branches of $\td{T}_i$ and  $\td{\mathcal{T}}_i$, respectively. 
Then let $P$ be a stratum of $(\h^2, \td{\lam}_i)$ that intersects $R$.
Let $\mP = \td{\kp}_i^{-1}(P)$, where $\td{\kap}_i\cn \td{C}_i \to \h^2$ is the lift of $\kap_i \col C_i \to \tau_i$.

The horizontal edges of $R$ are contained in different 2-dimensional strata of $(\h^2, \td{\lam}_i)$. 
Then those strata bound a region in $\h^2$ containing all other strata intersecting $R$.

First suppose that $P$ is one of those other strata, so that $P$ intersects no horizontal edge of $R$. 
Let  $\mP_{\AA(R_\infi)}$ be $\mP \cap f_i^{-1}(\AA(R_\infi))$. 
Then we show that
\begin{itemize}
\item $\mP_{\AA(R_\infi)}$ is a rectangle if $\dim \mP = 2$ or an arc supported on $\AA(R_\infi)$ if $\dim \mP = 1$, and 
\item   $\mP_{\AA(R_\infi)}$  is $\zeta$-close to $\RR \cap \mP$ with the Thurston metric  on $C$.
\end{itemize}

Suppose, in addition, that $P$ is a leaf of $\til{\lam}_i$.
If  $\ep > 0$ is sufficiently small, then the geodesic $\beta_i(P)$ is $\zeta$-close to the axis of $\AA(R_\infi)$ since $\beta_i \to \beta_\infi$ as $i \to \infi$. 
Thus, we can assume that, for each leaf $c$ of  the circular foliation of $\AA(R_\infi)$, if we let $H_c \sub \h^3$ be the hyperbolic plane bounded by $c$, 
then $\beta_i(P)$ intersects $H_c$ in a single point at an angle $\zeta$-close to $\pi/2$.
If $P$ has no atomic measure, $f_i(\mP)$ is a circular arc on $\rs$.
Then, if $\ep > 0$ is sufficiently small,  $f_i(\mP)$ intersects each leaf of the foliation of $\AA(\RR_\infi)$ at an angle $\zeta$-close to $\pi/2$.  
In particular $f_i(\mP) \cap \AA(R_\infi)$ is a connected circular curve  supported on $\AA(R_\infi)$.
Thus, if $\ep > 0$ is sufficiently small, $\beta_i(P)$ is sufficiently close to the axis of  $\AA(R_\infi)$, and therefore $f_i^{-1}(\AA(R_\infi)) \cap \mP$ is $\zeta$-close to $\RR \cap \mP$.

If $P$ has positive atomic measure, then $\mP$ is foliated by leaves of $\td{\LL}_i$.
Then if $\ell$ is a leaf of this foliation,  then $f_i(\ell) \cap \AA(R_\infi)$ is supported on $\AA(R_\infi)$  as above. 
In addition, its $f_i^{-1}$-image is $\zeta$-close to $\ell \cap \RR$, and there is a small isotopy between them in $\ell$.
Since $\mP \cap f_i^{-1}(\AA(R_\infi))$ is the union of such arcs,  similarly it is a rectangle supported on $\AA(R_\infi)$ and $\zeta$-close to $\RR \cap \mP$.

Suppose that $P$ is a complementary region of $\td{L}_i$.
Then accordingly $\mP$ is a complementary region of $\td{\LL}_i$.
If $\ep > 0$ is small enough, $R \cap P$ is a very thin rectangle bounded by  the vertical edges of $R$ and two boundary geodesics of $P$ intersecting them.
Then, regardless of the choice of $P$, those boundary geodesics are $\zeta$-close in $R$ and their $\beta_i$-images are geodesics in $\h^3$ that are $\zeta$-close to the axis of $\AA(R_\infi)$ in the convex hull $\Conv(\AA(R_\infi))$.
On the other hand, the other boundary geodesics of $\beta_i(P)$ are far away from  $\Conv\AA(R_\infi)$.
 Then $f_i(\mP) \cap \AA(R_\infi)$ is a rectangle supported on $\AA(R_\infi)$.
Hence, if $\ep > 0$ is sufficiently small,  since $f_i$ embeds $\mP$ into $\rs$, then $\mP \cap f_i^{-1}(\AA(R_\infi))$ is a rectangle supported on $\AA(R_\infi)$ and $\zeta$-close to $\mP \cap \RR$.

Next suppose that  $P$ contains a horizontal edge of $R$.
Then $P \cap R$ is a thin rectangle bounded by the vertical edges of $R$, a leaf $\ell$ of $\td{L}_i$, and a smooth boundary segment $m$ of $|\td{T}_i|$.
Then, if $\ep > 0$ is sufficiently small,   $m$ is an {\it almost geodesic} (i.e. a bilipschitz curve with very small distortion), and thus  $\beta_i(\ell)$ and $\beta_i(m)$ are $\zeta$-close to the axis of $\AA(R_\infi)$ in $\Conv \AA(R_\infi)$, independent of the choice of $P$. 
Let ${\tt l}$ be the boundary leaf of $\mP$ corresponding to $\ell$ and ${\tt m}$ be the smooth boundary segment of $|\td{\mathcal{T}}_i|$ corresponding to $m$.
 Then $f_i$ embeds $\mP$ into $\rs$.
 Thus $f_i({\tt l})$ is a circular arc and $f_i({\tt m})$ is an ``almost'' circular arc on $\rs$, and they intersect $\AA(R_\infi)$ almost orthogonally. 
Therefore, if $\ep > 0$ is sufficiently small,  each leaf of the foliation $\AA(R_\infi)$ intersects $f_i({\tt l})$ and $f_i({\tt m})$ in single points $\zeta$-orthogonally. 
 Thus there is a unique thin rectangular component of $f_i(\mP \cap \td{\mathcal{T}}_i) \cap \AA(R_\infi)$  bounded by $f_i({\tt l})$, $f_i({\tt m})$ and the boundary circular loops of $\AA(R_\infi)$. 
Let $\mP_{\AA(R_\infi)}$ be the subset of $\mP$ that diffeomorphically maps onto the component by $f_i$. 
Then similarly $\mP_{\AA(R_\infi)}$ is $\zeta$-close to $\RR \cap \mP$.

We have shown that, if $P$ is a stratum of $(\h^2, \td{L}_i)$ intersecting $R$, then   $\mP_{\AA(R_\infi)}$ is either an arc or a rectangle supported on $\AA(R_\infi)$. 
Thus $\mP_{\AA(R_\infi)}$ is diffeomorphic to the product of  a point and an interval or of two intervals, where the second factor is in the direction orthogonal to the circular leaves of $\AA(R_\infi)$. 
Let $\RR'$ be the union of all $\mP_{\AA(R_\infi)}$ over all strata $P$ of $(\h^2, \td{L}_i)$ intersecting $R$.
Then since the developing map is a local homeomorphism, by continuity, the product structures of $\mP_{\AA(R_\infi)}$ match up.
Therefore  $\RR'$ is a (smooth) rectangle supported on $\AA(R_\infi)$.
 In addition, since each $\mP_{\AA(R_\infi)}$ is $\zeta$-close to $\RR \cap \mP$, thus $\RR'$ is $\zeta$-close to $\RR$ if $\ep > 0$ is sufficiently small. 
  
One can construct a desired small isotopy of the vertical edges of $\RR$. 
Construct small isotopies  all $\RR \cap \mP$ and $\mP_{\AA(R_\infi)}$ in $\mP$ for all strata $P$ of $(\h^2, \td{L}_i)$ intersecting $R$ so that they match up. 
\endproof

 \subsection{Estimates of  admissible multiarcs  by transversal measure}\Label{11-12}

Recall that $T_\infi$ is an $(\ep, K)$-nearly straight traintrack carrying $\lam_\infi$ and $T_i$ is,  for $i$ sufficiently large, an  $(\ep, K)$-nearly straight traintrack carrying $L_i$. 
Set $L_i = (\lambda_i, \mu_i)$ for each $i \in \n$ with a geodesic lamination $\lam_i$ and its transversal measure $\mu_i$.
We prove Proposition \ref{6-28-12no1}, III - ii: 
\begin{proposition}\Label{6-20-12no1}
For every $\dl>0$, 
if $\ep > 0$ is sufficiently small and $i \in \n$ is sufficiently large, then for  all corresponding branches $R$ and $\RR$  of $\td{T}_i$ and $\td{\mathcal{T}}_i$, respectively,
 we have $| \length_{\AA(R)} (a)  - \mut_i(R) | < \dl$ for each vertical edge $a$ of $\RR$.
\end{proposition}

{\it Idea of proof.}
Let $R_\infi$ be the branch of $\til{T}_\infi$ corresponding to $R$  and $R_i$. 
Then there are hyperbolic  planes in $\h^3$ almost orthogonal to the $\beta_\infi$-image of $R_\infi$. 
The collapsing map $\til{\kap_i}\col \til{C}_i \to \til{\tau}_i$ on $R_\infi$ corresponds to the nearest point projections in $\h^3$ to  the hyperbolic planes supporting $\beta_\infi$ on $R_\infi$. 
Thus this proposition is proven by carefully relating $\length_{\AA(R)} (a)$ and  $\mut_i(R)$ in a hyperbolic plane  almost orthogonal  to $\beta_\infi | R_\infi$.

\begin{remark}\Label{Re:vertical}
Let $\LL_i$ be the canonical measured lamination on $C_i$, which descends to $L_i$. 
Then the transversal measure of $R$ given by $\td{L}_i$ is equal to that of $\RR$ given by $\td{\LL}_i$.
Recall that $\RR$ is foliated by the circular arcs parallel to its vertical edges  (\S\ref{10-5-1}).  
Then Proposition \ref{6-20-12no1} holds for the length of each leaf of the foliation, since the leaves have almost the same length. 
\end{remark}

{\it Proof of Proposition \ref{6-20-12no1}.}
We have the convergence $\beta_i \to \beta_\In$ (Lemma \ref{6-20-12no2}). 
Recall that $T_\If$ is $\ep$-nearly straight with a sufficiently small $\ep > 0$.
Throughout this proof, $\ep > 0$ is a sufficiently small number, which depends on $\del > 0$ but not on the choices of sufficiently large $i$ and  the corresponding branches $R$ and $\RR$.

For an arbitrary branch $R$ of $\td{T}_i$,  let $I = I(R)$ be the minimal sublamination of $\td{L}$ containing the leaves of $\td{L}_i$ intersecting $R$ (c.f. \cite{Baba-10}). 
Set $I = (\ld_I, \mu_I)$, where $\ld_I \in \gl(\h^2)$ and $\mu_I = \mut_i|\ld_I$.
Accordingly, let $\beta_I\cn \h^2 \to \h^3$ denote the bending map induced by $I$.
Then the total transversal measure of $I$ is $\mut_i(R)$.
In particular, for every geodesic segment $s$ on $\h^2$ transversal to $I$, we have $\mu_I(s) \leq \mut_i(R)$. 
Therefore $\beta_I\cn \h^2 \to \h^3$ continuously extends to $\pt_\In \h^2 \cong \s^1 \to \pt_\In \h^3$.

Each complementary component of $|\lam_I|$ is bounded by at most two leaves of $\lam_i$.
Thus the geodesic lamination $\ld_I$ extends to a geodesic foliation $F_I$ on $\h^2$.
Furthermore, 
since $T_i$ is a $(1 + \ep, K)$-nearly straight traintrack carrying $\lam_i$, for every $\dl > 0$, if $\ep > 0$ is sufficiently small, then 
each vertical edge  of $R$ is $\del$-nearly orthogonal to every leaf of $F_I$ unless they are disjoint. 

Let $C_I$ denote the projective structure on the open disk $\mathbb{D}^2$ associated with the measured lamination $I$ on $\h^2$. 
Since $R$ is connected and $I$ is a sublamination of $\Lt_i$, then  $C_I$, as a projective surface, isomorphically embeds into $\Ct_i$ in a canonical way (see \cite[\S 3.8.1]{Baba-10}).
In particular, since $I = \Lt_i$ on $R$,  then $\RR$ canonically  embeds into $C_I$.
Let $f_I\cn \mathbb{D}^2 \to \rs$ denote the developing map of $C_I$, and
let $\mF_I$ denote the canonical foliation on $C_I$ corresponding to $F_I$, so that the collapsing map $\kp_I\cn C_I \to \h^2$ takes each leaf of $\FF_I$ to a leaf of $F_I$ diffeomorphically.
Then the dual tree of $F_I$ is homeomorphic to an open interval. 
With the Thurston metric on $C$, the collapsing map $\kp_I$  continuously extends to a homeomorphism between ideal boundaries $\pt_\In C_I$ and $\pt_\In \h^2 ~ (\cong \s^1)$.

There are exactly two points on $\pt_\In C_I$ that are not endpoints of  leaves of $\FF_I$\,;
they divide $\pt_\In C_I\ (\cong \s^1)$ into two open intervals.
Pick one of the intervals, and let $\Phi$ be the projection of $C_I$ onto the interval along leaves of $\mF_I$.
With respect to the Thurston metric, $C_I$ is divided into Euclidean and hyperbolic regions.  \note{``hyperbolic regions''}
Then each connected component of the Euclidean region is foliated by leaves of $\mF_I$ sharing endpoints on $\pt_\In C_I$, and  
 $\Phi$ takes the component  to the end point in the chosen interval. 

Since each vertical edge $a$ of $\RR$ is transversal to the foliation $\mF_I$,
 $\Phi (a)$ is an arc $b$ in the open interval in  $\pt C_I$.
Recalling that  $\beta_I\cn \h^2 \to \h^3$  extends to $\pt_\In \h^2 \to \pt_\In \h^3$ and $\kp_I\cn C_I \to \h^2$ to $\pt_\In C_I \to \pt_\In \h^2$, we have $f_I = \beta_I \cc \kp_I$ on $\pt_\In C_I$.
Since $\kp_I\cn \pt_\In C_I \to \pt_\In \h^2$ is a homeomorphism,  we can identify $b$ with its image in $\pt \h^2$.
Thus let $\beta_{b}\cn b \to \rs$ denote the (continuous) path obtained by restricting $\beta_I$ to $b$.

The transversal measure  $\mu_I$ of $I$ is defined for arcs in $\h^2$ transversal to $I$.
Since the total measure of $\mu_I$ is finite, $\mu_I$ continuously extends to arcs in $\pt \h^2$.
Then $\beta_I | \pt_\In \h^2$ is determined by $\mu_I | \pt_\In \h^2$.
In particular,  $\beta_b$ can be regarded as a bending map of $b \st \pt_\In \h^2$ by the measure $\mu_I$ on $b$.

Let $g \st \h^3$ be the axis  of the round cylinder $\AA(R)$.
For each vertical edge $a$ of $\RR$, there is a map to its corresponding boundary circle $h$ of $\AA(R)$. 
Identifying $\rs$ with $\s^2$ conformally, we normalize $\rs$ by an element of $\psl$ so that $h$ is the equator. 
Let $\Conv(A)$ be the convex hull of $\AA(R)$ in $\h^3$. 
Then, for every $\dl > 0$, if $\ep > 0$ is sufficiently small,  then, for all branches $R$ of $\td{T}_i$ and for all leaves $l$ of $\td{L}_i$ intersecting $R$, since the geodesic $\beta(l)$ is nearly orthogonal to the boundary hyperplanes of $\Conv(\AA(R))$,  the geodesic segments $\beta_I(l \cap R)$ and $g \cap \Conv(\AA(R))$  are $\dl$-close in the Hausdorff metric. 
Thus, for any (small) $\del > 0$,  if $\ep > 0$ is sufficiently small,  then $\Im(\beta_{b})$ is contained in a  $\del$-neighborhood of an endpoint $O$ of $g$ on $\rs$ in the spherical metric. 
 In particular, $\Im(\beta_{b})$ is contained in  a round disk $D$ on $\rs$ bounded by $h$. 

By the definition of $\Phi\cn C_I \to \pt C_I$, for each  $x \in a$,  $\Phi(x)$ is an endpoint of the leaf $\ell$ of $\mF_I$ containing $x$. 
Consider the ray from $x$ to $\Phi(x)$ contained in $\ell$.
Then  $f_I$ homeomorphically takes  this ray onto  a circular arc in $\rs$ that connects the point $f_I(x)$ to  the point $f_I(\Phi(x)) = \beta_{b}(\Phi(x))$.
Let $r_x\cn [0,1] \to D$ denote this circular arc  with $r_x(0) = f_I(\Phi(x))$ and $r_x(1) = f_I(x)$.
Then, since the geodesic $\beta_I\cc \kap_I(\ell)$ is nearly orthogonal to the hyperplane $\Conv(h)$, for every $\dl > 0$, if $\ep > 0$ is sufficiently small, then   $r_x$ and $h$ are  $\dl$-nearly orthogonal (at the point $f_I(x)$) and $\gm_x(0)$ is $\dl$-close to the center $O$. 

There is a unique maximal ball in $C_I$ whose core contains $x$.
By the definition of a maximal ball, its $f_I$-image   is a round open ball $R_x$ in $\rs$.
The boundary circle of $R_x$  bounds a hyperbolic plane $H_x$ in $\h^3$.
Then $r_x$ orthogonally intersects $\pt R_x $ at the endpoint $r_x(0)$.
For all $\del > 0$, if $\ep > 0$ is sufficiently small, then, since $\beta_I \cc \kp_I(\ell)$ is nearly orthogonal to $\Conv(h)$, 
 the curvature of $r_x\cn [0,1] \to D \st \s^2$ is less than $\dl$ (in the induced spherical metric on $D$). 
Let $\rt_x\cn [0,1] \to D$ be the spherical geodesic segment in $D$ connecting the endpoints of $r_x$. 
Then, since $r_x(0)$ is sufficiently close to the center $O$ of $D$ and the curvature of $r_x(0)$ is sufficiently small, for every $\dl > 0$, if $\ep > 0$ is sufficiently small, then, (in particular) for all $x \in a$,   the ``almost'' geodesic segment  $r_x$ is $\dl$-close to the geodesic segment  $\rt_x$ in the hausdorff metric in  $D$.
\begin{lemma}\Label{10-26-1}
For every $x$ on the arc $a$, there exists a small neighborhood $U_x$ of $x$ in $a$, such that if $y \in U_x$, then $\rt_x$ and $\rt_y$ are disjoint,  except possibly
at their endpoints close to $O$ (i.e. possibly  $\rt_x(0) = \rt_y(0)$).
\end{lemma}
\begin{proof}
Since $f_I|a$ is an immersion into $h$, if $ y \in a$ is sufficiently close to $x$, then   $\rt_x(1) = f_I(x)$ is different from $\rt_y(1) = f_I(y)$.
If $\rt_x(0) = \rt_y(0)$, since $\rt_x$ and $\rt_y$ are  geodesic segments in the hemisphere $D$, they are disjoint except at $\rt_x(0) = \rt_y(0)$. 

Next  assume that $\rt_x(0) \neq \rt_y(0)$ for $y \in a$ sufficiently close to $x$.
The foliation $\FF_I$  on $C_I$ carries a canonical transversal measure that descends to $I$ on $\h^2$ and it has no atomic measure.
Then, by continuity, if $y$ is sufficiently close to $x$,  the transversal measure of the segment in $a$ connecting $x$ to $y$ is sufficiently small. 
Therefore $r_x$ and $r_y$  are disjoint. 
Let $\ell$ be the spherical geodesic in $D$ through $r_x(0)$ and $r_y(0)$ with its endpoints on $h$.
Then  $r_x$ and $r_y$  are  contained in a component $P$ of $D \minus \ell$, so that the endpoints of $r_x$ and $r_y$ on the boundary $P$.
Since $P$ is convex and  $r_x$ and $r_y$  are disjoint,  by the uniqueness of geodesics, $\rt_x$ and $\rt_y$ must be disjoint. 
\end{proof}

For each $x \in a$, let $\gm_x\cn [0, 1] \to D$ denote the geodesic segment  on $D \st \s^2$ connecting the center $O$ to $r_x(1)$.
Then $\gm_x$ intersects $h = \pt D$ orthogonally at $\gm_x(1)$. 
Let $\ap = f_I | a$.
Parametrize $a$ so that $\ap \cn a \to h \st \s^2$ is an isometric immersion, and identify $a$ with the closed interval $[0, A]$, where $A$ denotes the length of $\ap$.
Then,  as $x \in  [0, A]$ increases,  the circular arc $\gm_x\cn [0, 1] \to D~ (x \in [0,1])$ changes by the continuous rotation of $D$ about  $O = \gm_x(0)$ monotonically.
Thus define $\gm\cn a \times [0,1] \to D$ by $\gm(x, t) = \gm_x(t)$.
Then $\gm(x , 1) = \ap(x)$ for all $x \in a$.
 Then  $\gm| a \times (0,1]$ is an immersion.
(The parametrized surface $\gm$ is a {\it fan} where the vertex of the fan  is  $O$ and the angle of the fan is $A$.)
Let $E$ denote the domain $a \times [0,1]$ of $\gm$  with the metric obtained by pulling back the spherical metric on $D$ via $\gm$.
Then we have
\begin{equation}
\Area(E) =  \Area_{\s^2}(D) \cdot (A/2\pi) = A.\label{8-23_1}
\end{equation}

Similarly define $\rt\cn a \times [0,1] \to D$ by $\rt(x, t) = \rt_x(t)$.
Then, by Lemma \ref{10-26-1}, the restriction of $\rt$ to $a\times (0,1]$ is an immersion.
Clearly $\rt| a \times \{0\}$ is $\beta_b \cc \Phi\cn a \to D$, and $\rt| a \times \{1\}$ is $ \ap\cn a \to \pt D$.
Let $F$ be the rectangular domain $a \times [0,1]$ of $\rt$ with the pull-back metric of the spherical metric on $D$.
Then the boundary edges of the rectangle $F$ correspond to the four curves $\ap$, $\beta_{b}$, $\rt_0$, and $\rt_1$.
Applying the Gauss-Bonnet theorem to $F$ with respect to the spherical metric, we have
$$ \on{\Area}(F) + \int_{\pt F }  k ds + \Sigma \theta_p =  2\pi\cdot 1,$$ 
where $k$ is the curvature at smooth points of $\pt F$, and $\theta_p$ are the exterior angles at non-smooth points $p$ of $\pt F$, which  include (infinitesimal) bending angles of $\beta_b$ corresponding to $\mu_I$.
Then, since $\beta_b \col b \to D$ be obtained by bending $b$ with respect to $\mu_I | b$,  the third term $\Sigma \theta_p$  is  $- \mut_i(R) + 2\pi$.

Next consider the second term  $$\int_{\pt F} k\, ds =  \int_{\beta_b}  k\, ds + \int_{\rt_0} k \, ds+ \int_{\rt_1} k\, ds+ \int_\ap k \, ds.$$
Since $\rt_0$ and $\rt_1$ are geodesic segments, $ \int_{\rt_0} k \, ds = 0$ and $\int_{\rt_1} k\, ds = 0$.
Since $\ap$ is a segment of the geodesic loop  $\pt D $ on $\s^2$, we have $\int_{\ap} k ds =0$. 
For every $\del > 0$, if $\ep > 0$ is sufficiently small, then the curvature at every smooth point $x$ of $\ap$ is less than $\del$, since $\beta_b(x)$ is sufficiently close to $O$ and $H_x$ is almost orthogonal to the hyperplane, in $\h^3$, bounded by $\bdr D$. 
Since we can assume that the length of $\beta_b$ is sufficiently small,  we  have  $\int_{\pt F} k\, ds < \del$.  
Thus 
\begin{lemma}\Label{10-18-1}
For every $\dl > 0$, if $i \in \n$ is sufficiently large and $\ep >  0$ is sufficiently small, then, 
$$-\dl < \Area(F) - \mut_i(R) < \dl,$$
for all branches $R$ of $\td{T}_i$.
\end{lemma}

By (\ref{8-23_1}) and  Lemma \ref{10-18-1}, to prove Proposition \ref{6-20-12no1}, it suffices to show  
\begin{proposition}\label{8-23_2} 
For every $\dl > 0$, if $\ep > 0$ is sufficiently small, then, for every pair of corresponding branches $R$ and $\RR$ of $\Tt_\If$ and $\td{\mathcal{T}}_\In$, respectively, and every vertical edge $a$ of $\RR$, we have
 $$-\dl < \Area (F) - \Area (E)  < \dl,$$
where $F$ and $E$ are defined as above. \end{proposition}
\note{eng: are as above.}

\proof
Fix sufficiently small $\del > 0$. 
For each $y \in b$, let $g_y\cn [0,1] \to D$ be the geodesic segment from $O =  g_y(0)$ to $\beta_b(y) = g_y(1)$. 
If $\ep > 0$ is sufficiently small, then  $\Im (\beta_b)$ is contained in the $\del$-neighborhood of $O$.
Thus $\length_{\s^2}(g_y) < \dl$ for all $y \in b$. 
Define  $g\cn b \times [0,1] \to D$ by $g(y,t) = g_y(t)$ for $y \in b$ and $t \in [0,1]$.
Then $g$ is smooth almost everywhere since so is $\beta_b$. 
Let $G = b \times [0,1]$ equipped with the 2-form obtained by pulling back the spherical Riemannian metric of $D$ via $g$. 
In particular, this form induces the arc length of $\beta_b\cn b \to D$ when restricted to $b \times \{1\} = b$.
Then we see that $0 \leq \Area(G) =  \int_{y \in b} \frac{1}{2}\length(g_y) dy$.   
Therefore, if $\ep > 0$ is sufficiently small, then $\Area(G) < \dl$, since  $\length (\beta_b)$ and $\length(g_y)$ for all $y \in b$ are sufficiently small. 
   
Let $\Delta_0$ be the (geodesic) triangle in $D$ bounded by $\rt_0, \gm_0$,  $g_0$, and $\Delta_A$ the triangle bounded by $\rt_{A}, \gm_{A}$, $g_{A}$. 
Then,  if $\ep > 0$ is sufficiently small, then $g_0$ and $g_{A}$  are sufficiently short so that $\Area_{\s^2}(\Delta_0) , \Area_{\s^2}(\Delta_{A}) < \del$. 
Thus, it suffices to show that, for sufficiently small $\ep > 0$, we have
\begin{eqnarray}
| \Area (F) - \Area (E) | < \Area(G) + \Area(\Delta_0) + \Area(\Delta_A). \label{120312}
\end{eqnarray}

In order to prove (\ref{120312}), we decompose the interval $[0, A]$ so that it  accordingly decomposes $F$, $E$, $G$ into subsets,  and we show similar inequalities for the corresponding subsets.
Let $X$ be the set of points $x$ in $[0, A]$ such that $\gamma_x$ and $\rt_x$ are parallel, so that either $\gamma_x \supset \rt_x$ (Type I) or $\gamma_x \subset \rt_x$ (Type II) holds.
Then accordingly either  
 $\gamma_x =  \rt_x \cup g_x$ or $\gamma_x \cup g_x = \rt_x$.

The supporting lamination $| I |$ has measure zero in $\h^2$, since it is obtained from the measured lamination on a closed hyperbolic surface, which has measure zero. 
On a subinterval $J$ of $[0, A]$ where $\beta_b$ is smooth, $J \cap X$ is a finite set. 
Thus $X$ has measure zero in $[0,1]$. 
Therefore, letting $F_X, E_X, G_X$ be the respective subsets of $F, E, G$ corresponding to $X \times [0,1]$ in  $I \times [0,1]$, we have
\begin{eqnarray*}
\Area(F_X) =  \Area(E_X) =  \Area(G_X) = 0.
\end{eqnarray*}

The definition of $X$ implies that $X$ is a closed subset of $[0, A]$.
Then the complement of $X$ is the union of, at most,  countably many disjoint intervals $Y_k \,(k \in K)$.
Then $Y_k$ have open endpoints except at $0$ and $A$. 
For each $k \in K$, let  $0 \leq y_k < z_k \leq 1$, be the endpoints of $Y_k$. 
In addition let $F_k$ and $E_k$ be the subsurfaces of $F$ and $E$, respectively, corresponding to   $Y_k \times [0,1]$. 
 Let $G_k$ be the subsurface of  $G$ corresponding to $[\Phi(y_k), \Phi(z_k)] \times [0,1]$.

Then it suffices to show that, 
if $Y_k$ does not contain an endpoint of $[0, A]$, 
$$|\Area(F_k) - \Area(E_k)| \leq \Area(G_k),$$

and, if $Y_k$ contains an endpoint $p$ of $[0, A],$
 $$|\Area(F_k) - \Area(E_k)| \leq \Area(G_k) + \Area(\Delta_p).$$

(i) First we suppose that the interval $Y_k$ is an open interval $(y_k, z_k)$. 
 At least one of the endpoints  $y_k$ and $z_k$ must be of Type I , since $\beta_b$ is obtained by bending $b$.  
Therefore the endpoints of $Y_k$ are  either: (i - i)  both of Type I (Figure \ref{2-3no1}) or (i - ii)  of the different types, Type I and II (Figure \ref{2-3no2}). 

Suppose the case of (i - i). 
Then $\rt (F_k)$  is disjoint from $O$. 
Then $E_k$ is  naturally the union of $F_k$ and $G_k$ such that $F_k$ and $G_k$ have disjoint interiors in $E_k$. 
In particular $\Area (E_i) = \Area (F_i) + \Area(G_i)$. 

Suppose the case of (i - ii).
Then there is a point  $t \in (y_k, z_k )$ such that $g_t$ is ``tangent'' to $\beta_b$ at $\Phi(t)$ so that the restrictions of $g$ to $[\Phi(y_k), \Phi(t)] \times [0,1]$ and $[\Phi(t), \Phi(z_k)] \times [0,1]$ are immersions of  the opposite orientations (Figure \ref{2-3no2}). 
Let $G_i'$ be the component of $G_i \minus \{\Phi(t)\} \times [0,1]$ that contains $\{\Phi(y_k)\} \times [0,1]$ if $y_k$ is of Type I and $\{\Phi(z_k)\} \times [0,1]$ if $z_k$ is of Type I.  
Then $E_k$ is  the union of $F_k$ and  $G_k'$. 
In particular  $\Area(E_k) <  \Area(F_k) + \Area(G_k')  <  \Area(F_k) + \Area(G_k)$.

\begin{figure}[h]
\begin{overpic}[trim = 0mm 0mm 0mm 0mm, clip, width = 3in
]{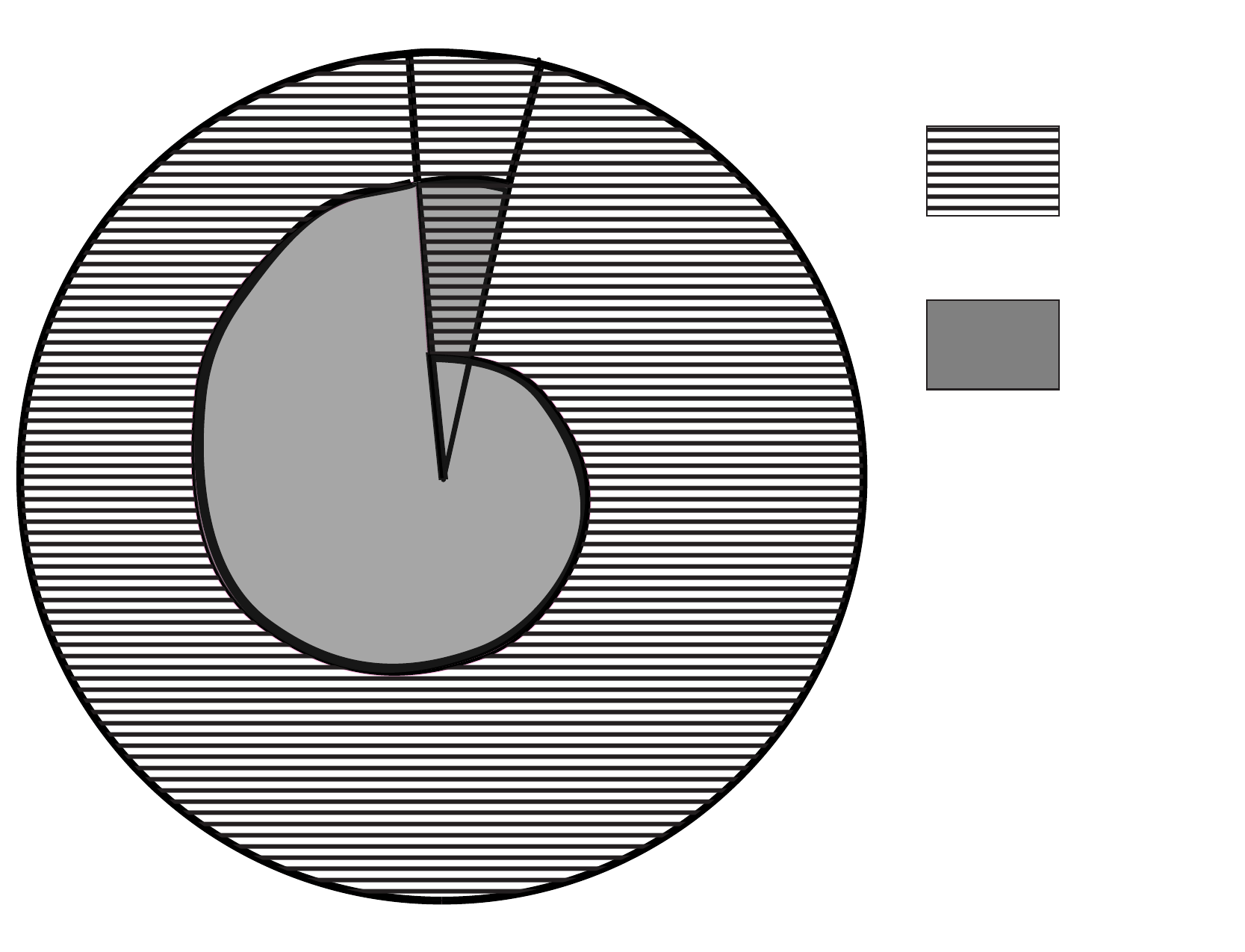} 
\put(87, 60){$F_k$}
\put(87, 46){$G_k$}
      \end{overpic}
\caption{$(y_k, z_k)$ with Type I ends}\label{2-3no1}
\end{figure}

\begin{figure}[h]
\begin{overpic}[trim = 0mm 0mm 0mm 0mm, clip,width = 2.2in
]{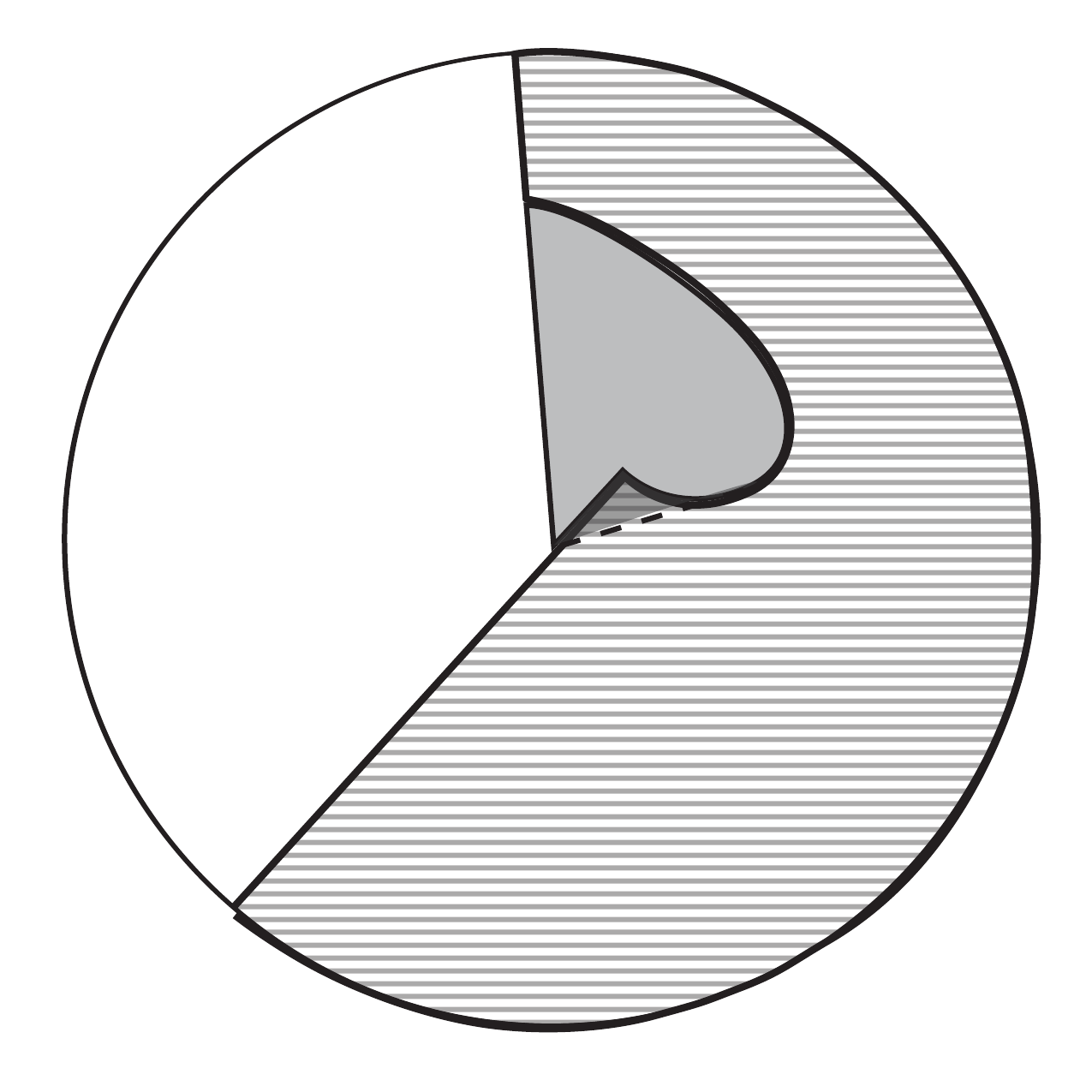}
\put(60, 27){$F_k$}
\put(53, 64){$G_k$}
\put(53, 45){$g_t$}
      \end{overpic}
\caption{$(y_k, z_k)$ with Type I and II ends}\label{2-3no2}
\end{figure}


(ii) Next suppose that exactly one of the endpoints of $Y_k$ is closed. 
Consider the case of  $Y_k  = (y_k, z_k]$ so that $z_k = A$.  (The case of $[y_k, z_k)$ is similar.) 
First suppose, in addition, that $y_k$ is of Type I. 
 Then  if $\ep > 0$ is  sufficiently small, then either $\rt_{y + \ep}$ is disjoint from all $\gam_{t}$ for $y_k \leq t  < y_k +\ep$  (Case I, Figure \ref{10-29-4}) or    $\gam_{y + \ep}$ is disjoint from $\rt_t$  for $y_k \leq t < y_k + \ep$ (Case II, Figure \ref{2-14no1}). 

In  Case I, we naturally have  $$E_k \cup \Delta_A = G_k \cup F_k,$$ so that $E_k$ and $\Delta_A$ have disjoint interiors and $G_k$ and $F_k$ have disjoint interiors.
Then 
$E_k \minus F_k \sub G_k$ and $F_k \minus E_k \sub \Delta_A$. 
Therefore
$$|\Area(E_k) - \Area(F_k)| < \Area(G_k) + \Area(\Delta_A).$$ 

In Case II , we naturally have $$E_k = F_k \cup G_k \cup \Delta_A,$$
so that $F_k, G_k, \Delta_A$ have disjoint interiors.
Then

$$0 < \Area (E_k) - \Area(F_k)  =  \Area( G_k) + \Area(\Delta_A).$$

\begin{figure}
\begin{overpic}[scale=.5
] {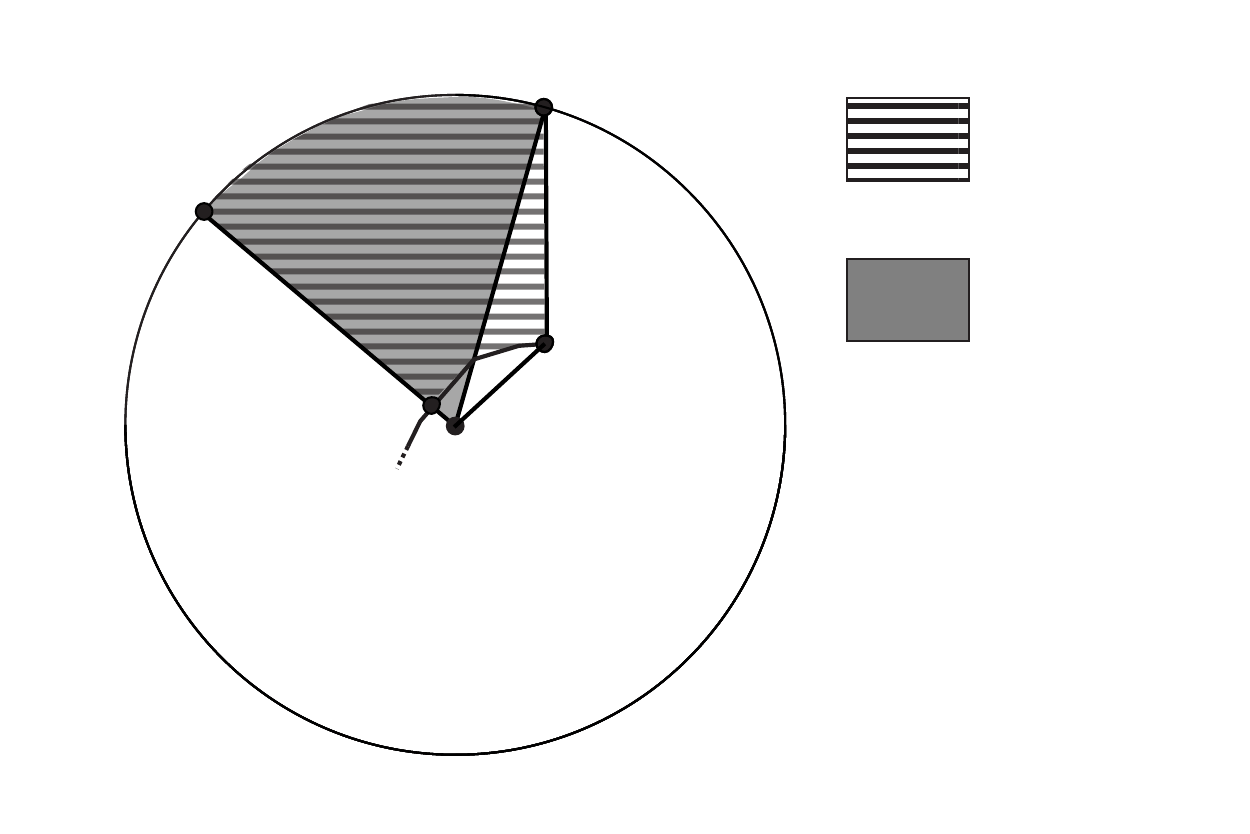}
      \put(37, 28){O}
      \put(15, 32){$\beta \cc \Phi(y_k)$} 
         \put(45, 40){$\beta \cc \Phi(A)$} 
       
          \put(9, 53){$f(y_k)$}  
  \put(43, 60){$f(A)$}  

\put(80, 54){$F_k$}
\put(80, 41){$E_k$}
      \end{overpic}
\caption{$(y_k, z_k]$ with Type I at $y_k$, Case I}\label{10-29-4}
\end{figure}

\begin{figure}
\begin{overpic}[scale=.5
] {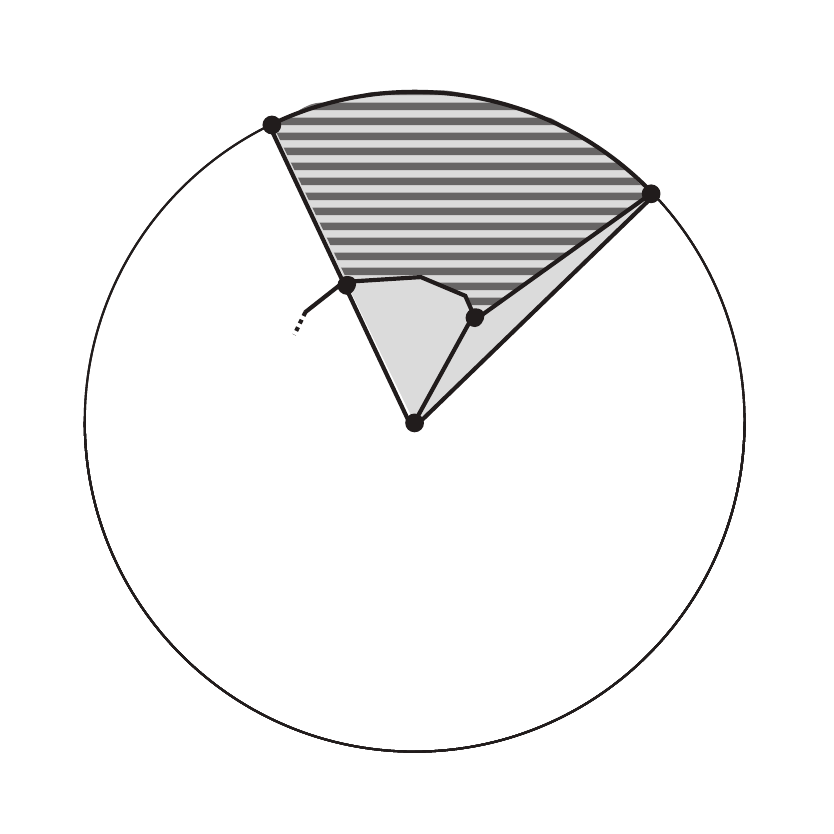}
      \put(48,42 ){O}
          \put(15, 65){$\beta \cc \Phi(y_k)$} 
         \put(63, 58){$\beta \cc \Phi(A)$} 
          \put(30, 90){$f(y_k)$}  
  \put(80, 80){$f(A)$}  \
      \end{overpic}
\caption{$( y_k, z_k]$ with Type I at $y_k$, Case II}\label{2-14no1}
\end{figure}

Next  suppose instead that $y_k$ is of Type II (Figure \ref{10-29-5-2}). 
Then $(E_k \sm F_k) \sqcup (F_k \minus E_k)$ is naturally embedded in $G_k \sqcup \Delta_k$.
Thus $$|\Area (E_k) - \Area(F_{n}) |  < \Area(G_k) + \Area(\Delta_k).$$ 

\begin{figure}
\begin{overpic}[scale=.5
] {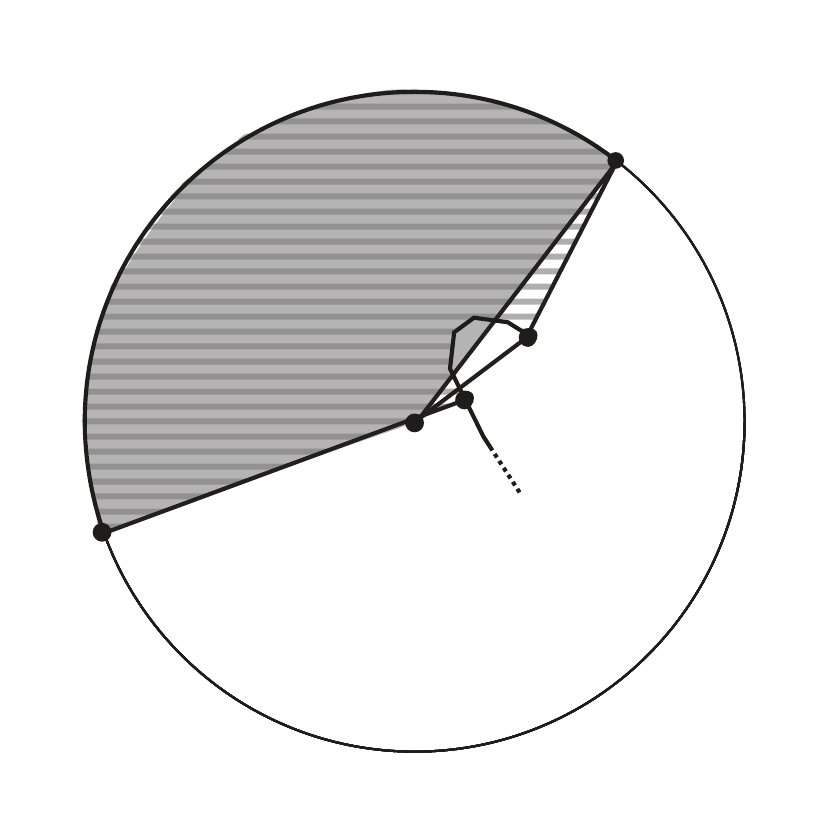}
    \put(48,42 ){O}
    \put(48,42 ){O}
         \put(60, 50){$\beta \cc \Phi(y_k)$} 
         \put(64, 58){$\beta \cc \Phi(z_k)$} 
          \put(0, 30){$f(y_k)$}  
  \put(72, 85){$f(z_k)$}  
 
      \end{overpic}
\caption{Type II at $y_k$}\label{10-29-5-2}

\end{figure}

Lastly suppose that $Y_k$ is the closed interval. 
Then $Y_k$ must be  the entire interval $[0, A]$. 
Then a similar argument proves (\ref{120312}).
\Qed{8-23_2}

\section{Characterization of $\PP_\rho$ via Thurston coordinates}\Label{s:charactrization}
\subsection{Local characterization of $\PP_\rho$ in $\GL$.}

Let $C \cong (\tau, L)$ be a projective structure on $S$ with holonomy $\rho\cn \po(S) \to \psl$, and let $\kap\cn C \to \tau$ be the collapsing map.
Let $\LL$ be the canonical lamination on $C$, which descends to $L$ by $\kap$.
Let $C_0 \cong (\tau, L_0)$ be the corresponding projective structure with holonomy $\rho$ as in \S \ref{120612}, so that $C = \Gr_M(C_0)$, where $M$ is  the maximal weighted multiloop ``contained in $L$''  so that $M = L - L_0$. 
Similarly let $\kap_0\cn C_0 \to \tau$ be its collapsing map and  $\LL_0$ be the canonical lamination on $C_0$.
Then $C$ and $C_0$ correspond to the same $\rho$-equivariant pleated surface $\h^2 \to \h^3$.
 Given another projective structure $C' \cong (\tau', L') \in \PP_\rho$,   let $\kap'\cn C' \to \tau'$ be its collapsing map and  $\LL'$ be the canonical lamination on $C'$. 
Consider a shortest closed geodesic loop on $\tau$, and let $K > 0$ be one-fourth of its length (or any  positive number less than one-third of it). 

\begin{theorem}\Label{8-28-12}
For every $\ep > 0$, there is a $\dl > 0$, which depends only on $C$ and $\ep$,  such that, 
if  $C' \cong (\tau', L')$ satisfies $\angle_\tau ( L, L' ) < \del$, then 
there are a traintrack $\mathcal{T} = \{\RR_j\}$ on $S$ and marking homeomorphisms  $$\phi\cn S \to C, ~  \phi_0\cn S \to C_0, ~ \phi'\cn S \to C'$$  
 taking $\mathcal{T}$ to admissible traintracks (on $C, C_0, C'$) such that:
 \begin{itemize}
\item[(I)] \begin{itemize}
\item[\textbullet]  $\phi(\mathcal{T})$  carries $\LL$ on $C$, and it descends, by $\kap$, to an $(\ep, K)$-nearly straight traintrack $T$ on $\tau$ carrying $L$.
\item[\textbullet] $\phi_0(\mathcal{T})$ carries $\LL_0$ on $C_0$, and it descends, by $\kap_0$, also to   $T$  on $\tau$ (carrying $L_0$).
\item[\textbullet] $\phi'(\mathcal{T})$ carries $\LL'$ on $C'$, and it descends, by $\kap'$, to an $(\ep, K)$-nearly straight traintrack $T'$ on $\tau'$ carrying $L'$. 
\end{itemize}
\end{itemize}
By identifying $\mathcal{T}$  and its images by the homeomorphisms $\phi, \phi_0, \phi'$, we have:
\begin{itemize}
\item[(II)]  $C'$ is obtained by grafting $C_0$ along a weighted multiloop $M'$ carried by $\mathcal{T}$, and  moreover $M'$ is $\ep$-close $\LL' - \LL_0$  on $\mathcal{T}$. 
 \item[(III)]    We can graft $C$ and $C'$ along some weighted multiloops carried by $\mathcal{T}$, respectively, to a common projective structure.
Indeed, if there are weighted multiloops $\hat{M}$ and $\hat{M'}$ carried by $\mathcal{T}$ such that $\hat{M} + M = \hat{M}' + M'$ on $\mathcal{T}$, then 
\end{itemize} 
\begin{eqnarray*}
\Gr_{\hat{M}} (C) = \Gr_{\hat{M}'}(C').
\end{eqnarray*}
\end{theorem}

\begin{remark}

In (II) and (III), we make the multiloops transversal to the circular foliations of $\mathcal{T}$'s on $C, C_0, C'$, so that they are admissible (by Lemma \ref{admissible}).  

In (I), the correspondences between the traintracks on the projective surfaces and the hyperbolic surfaces are up to  small perturbations of the vertical edges (as in Proposition \ref{6-17-12}).

The traintracks $T, T'$  are obtained by Proposition \ref{7-7-12no1}. 
Thus we can in addition assume that there is an $\ep$-rough isometry that takes $T$ to $T'$ that preserves the marking.

In Theorem B, $\mL' - \mL_0$ is replaced by $L' - L_0$ as they coincide as topological measured laminations. 
\end{remark}

The following proposition immediately yields  Theorem \ref{8-28-12}  (I), and in addition it will be promoted to  Theorem \ref{8-28-12} (II) and  (III). \note{eng: promote}
\begin{proposition}\Label{120912}
For every $\ep > 0$, there is a $\del > 0$ such that, 
if $C' \cong (\tau', L')$ is a projective structure with holonomy $\rho$ and  $\angle_{\tau}(L, L') < \del$, 
then
there are a traintrack $\mathcal{T} = \{\RR_j\} _{j = 1}^n$ on $S$ and  marking homeomorphisms 
 $$\phi\cn S \to C, ~  \phi_0\cn S \to C_0, ~ \phi'\cn S \to C'$$ taking $\mathcal{T}$ to admissible traintracks (on $C, C_0, C')$, such that: 
\begin{itemize}
\item[(I)]
\begin{itemize}
\item[\textbullet]   $\phi(\mathcal{T})$  descends, by $\kap$, to an $(\ep, K)$-nearly straight traintrack on $\tau$ carrying $\mL$.
\item[\textbullet]  $\phi_0(\mathcal{T})$ descends, by $\kap_0$,  to the same $(\ep, K)$-nearly straight traintrack on $\tau$ (which carries $\mL_0$).
\item[\textbullet]  $\phi'(\mathcal{T})$ descends,  by $\kap'$, to an $(\ep, K)$-nearly straight traintrack  on $\tau'$ carrying $\mL'$.
\end{itemize}
\item[(II)]
The developing maps of $C, C_0, C'$ induce isomorphisms between  $C \sm \phi(\mathcal{T}), C_0 \sm \phi_0(\mathcal{T}), C' \sm \phi'(\mathcal{T})$  as projective surfaces (see Definition \ref{isomorphic}).
\end{itemize}
Thus we can assume that  $\phi, \phi_0, \phi'$ induce those isomorphisms,  and  \note{english: structure}
\begin{itemize}
\item[(III)]  
\begin{itemize}
\item[i.] For every branch $\RR_j$ of $\mathcal{T}$ and its lift $\td{\RR}_j$ to $\td{S}$, the corresponding branches $\td{\phi}'(\td{\RR}_j)$, $\td{\phi}(\td{\RR}_j)$, $ \td{\phi}_0(\td{\RR}_j)$ are supported on a common round cylinder on $\rs$, where $\td{\phi}\cn \td{S} \to \td{C}$ and $\td{\phi}'\cn \td{S} \to \td{C}'$, $\td{\phi}_0\cn \td{S} \to \td{C}_0$  are the lifts of $\phi, \phi', \phi_0$. 
\item[ii.] By identifying $\mathcal{T}$ and its images under $\phi, \phi_0, \phi',$ then $\phi'(\RR_j) \st C'$ is obtained by grafting $\phi_0(\RR_j) \st C_0$ along a multiarc $M'_j$ that is $\ep$-close to $\mL' - \mL_0$ on $\RR_j$, and  $\phi(\RR_j) \st C$ is obtained by grafting  $\phi_0(\RR_j) \st C_0$  along the multiarc $M_j$ corresponding  exactly to $\mL - \mL_0$ on $\RR_j$.
 \end{itemize}
\end{itemize}
\end{proposition}

\begin{proof}
Let $C'_i \cong (\tau'_i, L'_i)$ be a sequence in $\PP_\rho$ such that $\angle_\tau(L, L'_i) \to 0$.
Then it suffices to show the proposition for $C'_i$ with sufficiently large $i$.
By Theorem \ref{12-19},  $\tau'_i \to \tau$ as $i \to \In$.  

Without loss of generality, we can assume that both $C$ and $C_0$ appear in the sequence  $\{C'_i\}$  infinitely many times. 
Since $\GL(S)$ is compact,  we can in addition assume that $|L'_i|$ converges to a geodesic lamination $\lam_\In$ in the Hausdorff topology as $i \to \In$, by taking a subsequence if necessary. 
For every $\ep > 0$,  by applying Proposition \ref{6-28-12no1} to $\rho, \tau, \lam_\In$ and Lemma \ref{8-11_1}, we obtain the proposition. 
Note that, since $L_0$ has no leaves of weight at least $2\pi$, for every $\ep > 0$, if $\del > 0$ is sufficiently small,  the weight of $\mL'$  is more than the weight of $\mL_0$ minus $\ep$ on each branch of $\mT$. 
\end{proof}

\proofof{Theorem \ref{8-28-12}}
For $\ep > 0$,  let $\del > 0$ be the constant obtained by applying  Proposition \ref{120912}.
Then,  for every $C' = (f', \rho) \in \PP_\rho$ with $\angle_\tau(L, L') < \del$,   Proposition \ref{120912} yields  a topological traintrack $\mathcal{T} = \{\RR_j\} _{j = 1}^n$ on $S$ and   marking-preserving homeomorphisms $\phi\cn S \to C$, $\phi_0\cn S \to C_0$ and $\phi'\cn S \to C'$.
Thus we have (I) by Proposition \ref{120912} I. \note{eng: $(I)$ or $I$.}
In particular  $\kap$ and $\kap_0$ take $\phi(\mathcal{T})$ and $\phi_0(\mathcal{T})$, respectively, to the same $(\ep, K)$-nearly straight traintrack $T$ on $\tau$ carrying both $L$ and (the geodesic representative of) $L'$ on $\tau$.
Recall that $L_0$ has no closed leaf of weight at least $2\pi$.

First  we prove (II).
We have a natural decomposition of $S$ by the traintrack  $\mathcal{T}$:
$$S = (S \minus |\mathcal{T}|) \cup \mathcal{T}  = (S \minus |\mathcal{T}| ) \cup (\cup_{j= 1}^{n} \RR_j).$$
Then, this decomposition of $S$ descends to  decompositions of  $C$ and $C'$ via the homeomorphisms $\phi$ and $\phi'$, respectively: 
$$C' = (C' \sm \phi'(\mathcal{T})) \cup \phi'(\mathcal{T})  = (C' \sm \phi'(\mathcal{T})) \cup (\cup_j \phi'(\RR_j))$$
$$C_0 = (C_0 \sm \phi_0(\mathcal{T})) \cup \phi_0(\mathcal{T})  = (C_0 \sm \phi_0(\mathcal{T})) \cup (\cup_j \phi_0(\RR_j))~.$$

Then by Proposition \ref{120912} II, $\phi' \cc \phi\iv$ yields an isomorphism from $(C_0\sm \phi(\mathcal{T}))$  to  $(C' \sm \phi'(\mathcal{T}))$ compatible with the developing maps $f$ and $f'$.

In addition, 
 by  Proposition \ref{120912} III - i,  for each $j = 1,2, \dt, n$, the corresponding branches $\phi'(\RR_j)$ and $\phi_0(\RR_j)$ are supported on a common round cylinder. 
 Since $L_0$ has no closed leaf of weight at least $2\pi$,  by   Proposition \ref{120912} III- ii,   we have $\phi'(\RR_j) = \Gr_{M'_j}(\phi_0(\RR_j))$  for some multiarc $M'_j$ that is $\ep$-close to $\mu'(\phi'(\RR_j)) \minus \mu_0(\phi_0(\RR_j))$, where $\mu'$ and $\mu_0$ are the transversal measures of $\mL'$ and  $\mL_0$, respectively.  
Let $\kp_0\cn C_0\to \tau$ and $\kp'\cn C' \to \tau'$ be the collapsing maps. 
Then, by Proposition \ref{120912} I, $\kp_0$ takes the traintrack $\phi_0(\mathcal{T})$ on $C_0$ to an $(\ep, K)$-nearly straight traintrack on $\tau$ carrying $L_0$, and 
$\kp'$ takes the traintrack $\phi'(\mathcal{T})$ on $C'$ to an $(\ep, K)$-nearly straight traintrack on $\tau'$ carrying $L'$.

Since $\phi' (\mathcal{T})$ carries $\mL'$, the $n$-tuple $\{\mu'(\phi'(\RR_j)) \}_{j = 1}^n$  satisfies the switch conditions of the traintrack  $\phi'(\mathcal{T}) \cong \mathcal{T}$.
Similarly,  since $\phi_0(\mathcal{T})$ carries $\mL$, 
the $n$-tuple $\{\mu(\phi_i (\RR_j)) \}_{j = 1}^n$ satisfies the switch conditions of the traintrack $\phi(\mathcal{T}) \cong \mathcal{T}$. 
Thus the $n$-tuple of their differences, $\{ \mu'(\kp' \cc \phi'(\RR_j)) - \mu(\kp_0\cc \phi(\RR_j)) \}_{j =1}^n$, satisfies the switch conditions as well.
Therefore,  the $n$-tuple of the numbers of the arcs of $M'_j$ $(j =1,2,\dt,n)$  also satisfies the switch conditions.
Thus, after isotoping $M'_j$ on $\phi(\RR_j)$ through admissible multiarcs so that their endpoints match up on the vertical edges,  the union $\cup_j M'_j =: M'$ is a multiloop carried by the traintrack $\phi(\mathcal{T})$. 
Note that   the isomorphisms $\phi'(\RR_j) = \Gr_{M'_j}(\phi_0(\RR_j)),$ $j = 1, \dots, n,$ as projective surfaces remain true under such an isotopy (Lemma \ref{8-11_1}).  
Since $\phi_0(\mathcal{T})$ carries $\mL$ and  $\phi'(\mathcal{T})$ carries $\mL'$,  we can regard  $\mL' - \mL$ as  a measured lamination on $S$ carried by $\mathcal{T}$.
Since $\mu'(\phi'(\RR_j)) - \mu_0(\phi_0(\RR_j))$, therefore $\mL' - \mL$ is $\ep$-close to $M'$ on $\mT$.
 
Next we compare the traintracks $\phi_0(\mathcal{T}) \st C_0$ and $\phi'(\mathcal{T}) \st C'$ as projective structures on $\mathcal{T}$  (compare with \cite{Baba12}).
Let $\RR_i$ and $\RR_j$ be  branches of $\mathcal{T}$ that are adjacent along a vertical edge $e$.
 Then let $m_i$ and $m_j$ be arcs of $M'_i$ and $M'_j$, respectively, that share an endpoint on $e$, so that $m_i \cup m_j$ is a simple arc on  $\RR_i \cup \RR_j$, which is obtained by naturally gluing $\RR_i$ and $\RR_j$ along $e$.
Since $\RR_j$ and $\RR_i$ are supported on a round cylinder, the projective structure inserted by the grafting of  $\RR_i \cup \RR_j$ along  $m_i \cup m_j$ is exactly the union of projective structures inserted by the graftings  of  $\RR_i$ along $m_j$ and  of  $\RR_j$ along $m_j$. 
Since this holds for all adjacent arcs, we have
 $$ \phi'(\mathcal{T}) =  \cup_j \phi'(\RR_j)  =  \cup_j \Gr_{M'_j}(\phi_0(\RR_j)) =  \Gr_{M'}(\phi_0(\mathcal{T})).$$

 Hence
 \begin{eqnarray*}
C' &=&  (C' \sm \phi'(\mathcal{T})) \,\cup \, \phi'(\mathcal{T}) \\
 &=&(C_0\sm \phi_0(\mathcal{T})) \cup ( \Gr_{M'}(\phi_0(\mathcal{T}))) = \Gr_{M'}(C_0). \label{121112}
\end{eqnarray*}

Next we prove (III),  that is,  $\Gr_{\hat{M}} (C) = \Gr_{\hat{M}'}(C')$.
Since  $C = \Gr_M(C_0)$, we have
$$C =  (C \sm \phi(\mathcal{T})) \,\cup \, \phi(\mathcal{T}) = (C_0\sm \phi_0(\mathcal{T})) \cup \Gr_{M}(\phi_0(\mathcal{T})) = \Gr_{M}(C_0).$$
and 
$$ C \sm \phi(\mathcal{T})  = C_0\sm \phi_0(\mathcal{T}). $$

The traintrack  $\phi_0(\mathcal{T}) = \{\phi_0(\RR_j) \} (\cong \mathcal{T})$ carries $\LL_0$.
Thus the grafting of $C_0$ along $M$ naturally decomposes into grafting of all branches: 
$$\cup_j  \phi(\RR_j) = \phi(\mT) =   \Gr_{M}(\phi_0(\mathcal{T})) = \cup_j \Gr_{M|\RR_j}(\phi_0(\RR_j)).$$
In particular  $ \phi(\RR_j) =  \Gr_{M|\RR_j}(\phi_0(\RR_j)).$

Since $\hat{M}$ is also carried by $\phi(\mathcal{T}) \cong \mathcal{T}$ and each branch $\phi(\RR_j)$ of $\phi(\mathcal{T})$on $C$ is supported on a round cylinder, $\hat{M}$ is admissible on $C$ (Lemma \ref{admissible})
Then $\Gr_{\hat{M}}(C)$ is well defined, and $\Gr_{\hat{M}}(C) = \Gr_{\hat{M}} \cc \Gr_M(C_0)$.

Recall that the homeomorphism $\phi\cn S \to C$ represents the marking of $C$.
Then  there is a marking homeomorphism $\hat{\phi}\cn S \to \Gr_{\hat{M}}(C)$ so that 
$\hat{\phi} \cc \phi^{-1}$ induces an isomorphism from $C \minus \phi(|\mathcal{T}|)$ to $\Gr_{\hat{M}}(C) \minus \hat{\phi}(|\mathcal{T}|)$ and 
 $\Gr_{\hat{M}_j}\phi(\RR_j) = \hat{\phi}(\RR_j)$ for all $j = 1, \dots, n$, where $\hat{M_j} = \hat{M} | \phi(\RR_j)$. 
  Note that $(M + \hat{M}) | \RR_j$ is the multiarc on $\RR_j$ such that the number of its arcs is the sum of the number of the arcs of $M |\phi_0(\RR_j)$ and $\hat{M} | \phi(\RR_j)$.
Then $\hat{\phi}(\RR_j)$ is obtained by grafting $\phi_0(\RR_j)$ along $(M + \hat{M}) | \RR_j$, and
$$\Gr_{\hat{M}}(C) =\cup_j \Gr_{(M + \hat{M})|\RR_j}  \phi_0(\RR_j)  \cup   (C_0 \minus \phi_0(\mT)).$$
Similarly,  since $\Gr_{\hat{M'}}(C') = \Gr_{\hat{M}'} \cc \Gr_{M'}(C_0)$,  the traintrack $\mathcal{T}$ yields a decomposition of $\Gr_{\hat{M'}}(C')$.
$$\Gr_{\hat{M}'}(C') = \cup_j \Gr_{(M' + \hat{M'})|\RR_j} \phi_0(\RR_j)  \cup (C_0 \minus \phi_0(\mT)).$$
Since $M + \hat{M} = M' + \hat{M}'$ on $\mathcal{T}$, therefore   $\Gr_{\hat{M}} (C) = \Gr_{\hat{M}'}(C')$.
\Qed{8-28-12}

\begin{theorem}\Label{062213}
For every $\ep > 0$ and every compact set $X$ in the moduli space of $S$, there is $\del > 0$, such that 
the assertions of  Theorem \ref{8-28-12} hold true for every projective structure $C \cong (\tau, L)$  on $S$ with unmarked $\tau$ in $X$.   
\end{theorem}

\begin{proof}
We can observe that, for the proof of Theorem \ref{8-28-12},  the assumption $\angle_\tau(L, L')  < \del$ is only used to guarantee that, there is an $(\ep, K)$-nearly straight traintrack  $T$ on $\tau$ carrying both $L$ and $L'$, where $K = K_\tau$ is one-fourth of the length of the shortest closed geodesic loop on $\tau$. 
(All traintracks in Theorem \ref{8-28-12} are constructed from $T$.)

Since $X$ is compact, now let $K$ be the infimum of $K_\tau$ over all $\tau \in \TT$ that project to $X$.  
Then it suffices to show that, for every $\ep > 0$, there is a $\del > 0$,  such that, if $C  \cong (\tau, L)$ is a projective structure on $S$ with holonomy $\rho$ and with unmarked $\tau$ in $X$ and   $C'  \cong (\tau', L')$ is  another projective structure  with the same holonomy $\rho$ with $\angle_\tau(L, L') < \del$, then there is an $(\ep, K)$-nearly straight traintrack $T$ on $\tau$ carrying both $L$ and $L'$.  
This claim  is equivalent to 
\begin{claim}
 If there are two sequences, $C_i \cong (\tau_i, L_i)$ and $C'_i \cong (\tau_i', L_i')$, of projective structures on $S$  with all unmarked $\tau_i$ in $X$, such that $\Hol(C_i) = \Hol(C_i')$ and  $\angle_{\tau_i}(L, L_i) <  \del$ for all $i$, then there is an $(\ep, K)$-nearly straight traintrack $T_i$  on $\tau_i$ carrying both $L_i$ and $L_i'$ for sufficiently large $i$. 
\end{claim}
For each $i$, without loss of generality, we can change the markings of $C_i$ and $C_i'$ simultaneously by a single homeomorphism of $S$.  
Thus, by the compactness of $\GL$ and $X$, we can in addition assume that $\tau_i$ converges to $\tau \in \TT$ and that $|L_i|$ and $|L_i'|$ converge to $\lam$ and $\lam'$, respectively, in $\GL$.
Then since $\angle_{\tau_i}(L_i, L_i') < \del$, we have $\angle_\tau(\lam, \lam') \leq \del$. 
Thus if $\del > 0$ is sufficiently small,  for every $\ep > 0$, there is an $(\ep, K)$-nearly straight traintrack $T$ on $\tau$ carrying both geodesic laminations $\lam$ and $\lam'$.
By the convergence of $\tau_i$, $|L_i|$, $|L_i|$,  for sufficiently large $i$, there is an $(\ep, K)$-nearly straight  traintrack  $T_i$ on $\tau_i$ that carries both $L_i$ and  $L_i'$. 
\end{proof}

\subsection{Alternative proof of Ito's Theorem.}

Suppose that $\rho \cn \pi_1(S) \to \pslr$ is a fuchsian representation. 
Let $\tau$ be the  marked hyperbolic structure $\h^2 / \Im(\rho)$ corresponding to $\rho$.
Then, As discussed in \S \ref{S:intro}, given arbitrary $C, C' \in \PP_\rho$, we can express them in Thurston coordinates as  $C \cong (\tau, M)$ and $C' \cong (\tau, M')$ with unique multiloops $M$ and $M'$.
Then, Theorem \ref{ito} also follows from Theorem \ref{062213}. 
\begin{theorem}\Label{cor:ito}
 $C$ and $C'$ can be transformed to a common projective structure by grafting $C$ along  $M'$ and $C'$ along  $M$,
 $$\Gr_{M'}(C) = \Gr_{M}(C').$$
\end{theorem}

\proof
recall that $\ML_\N$ denotes the set of weighted multiloops. 
Since $\rho$ is fuchsian,  $\PP_\rho$ is canonically identified with $\ML_\N$ by Thurston coordinates (see Theorem \ref{10-30-1}). 
Thus, for a different fuchsian representation $\eta$, there $\PP_{\rho}$ and $\PP_{\eta}$ are naturally identified via $\ML_\N$. 
In fact, there is a quasiconformal map $\Theta = \Theta_\eta \col \rs \to \rs$ that conjugates $\rho$ to $\eta$ and realizes the identification $\PP_\rho \cong \PP_\eta$ by postcomposing the developing maps of structures in $\PP_\rho$ with $\Theta$.
Then, if  $M$ is an admissible multiloop on $C \in \PP_\rho$, then $\Theta$ takes $M$ to an admissible loop $\Theta(M)$ on $\Theta(C)$. 
Then we have 
$$\Theta (\Gr_M (C)) = \Gr_{\Theta(M)} (\Theta(C)).$$
Thus it suffices to show $$\Gr_{\Theta_\eta(M')}(\Theta_\eta(C)) = \Gr_{\Theta_\eta(M)}(\Theta_\eta(C'))$$ for some fuchsian representation $\eta$.

Let $D_M$ be the (simultaneous) Dehn twist of $S$ along all loops of $M$.  
Then $D^k_M (\tau)$ denote the $k$-iterates of $D_M^k$ on $\tau$ for $k \in \Z_{> 0}$.
Then  $\angle_{D_M^k(\tau)}(M, M') \to 0$ as $k \to \infi$. 
Note that $D_M$ acts trivially on the moduli space, and in particular it preserves unmarked $\tau$.
Let $\eta_k\col \pi_1(S) \to \pslr$ be the fuchsian representation realizing $D_M^k(\tau)$. 
 Then, by Theorem \ref{062213}, if $k$ is sufficiently large, then $$\Gr_{\Theta_{\eta_k}(M')}(\Theta_{\eta_k}(C)) = \Gr_{\Theta_{\eta_k}(M)}(\Theta_{\eta_k}(C')).$$
\endproof

\subsection{Local characterization of $\PP_\rho$ in $\PML$}
\begin{theorem}\Label{ThmA} 
Let $C \cong (\tau, L)$ be a projective structure on $S$ with (arbitrary) holonomy $\rho\col \pi_1(S) \to \PSL$. 
For every $\ep > 0$,  there is a neighborhood $U$ of the projective class $[L]$ in $\PML$, such that, if another projective structure $C' \cong (\tau', L')$ with holonomy $\rho$ satisfies $[L'] \in U$, 
then there are a traintrack $\mathcal{T}$ on $S$ and marking homeomorphisms $\phi\cn S \to C$ and $\phi' \cn S \to C'$  such that, by identifying $\mathcal{T}$ and its images under those homeomorphisms:  
 \begin{itemize}
 \item  $\mathcal{T}$ is an admissible traintrack on both $C$ and $C'$, 
 \item the traintrack $\mathcal{T}$ carries $\LL$ on $C$, and the collapsing map $\kap\cn C \to \tau$ descends $\mathcal{T}$ to an $(\ep, K)$-nearly straight traintrack on $\tau$, 
 \item  the traintrack $\mathcal{T}$ carries $\LL'$ on $C'$, and the collapsing map $\kap'\cn C \to \tau'$ descends $\mathcal{T}$ to  an $(\ep, K)$-nearly straight traintrack on $\tau'$,
\end{itemize}
 and indeed we have either
 \begin{itemize}
 \item[(i)]  $Gr_M(C) = C'$ for  a weighted multiloop  $M$ on $C$ carried by $\mathcal{T}$ that  is $\ep$-close to  $\LL' - \LL$ calculated on $\mathcal{T}$ or
 \item[(ii)]   $C = Gr_{M'}(C')$ for  a weighted multiloop  $M' = \LL - \LL'$ on $\mathcal{T}$.
\end{itemize}
\end{theorem}

\begin{remark}
Similarly to Theorem \ref{8-28-12}, to be precise,  an $\ep$-small perturbation of the vertical edges  is needed to make  the image of traintracks under the collapsing maps $(\ep, K)$-nearly straight (as in Proposition \ref{6-17-12}). 
\end{remark}

\begin{proof} 
If $L = \emptyset$ or  $L' = \emptyset$, then $C$ or $C'$ is accordingly a hyperbolic structure.  
In particular $\rho$ is fuchsian.

Then, by Theorem \ref{10-30-1}, if $L = \emptyset$, then (i) holds and if $L' = \emptyset$ then (ii)  holds. 
Thus we can suppose that $L, L' \neq \emptyset$.
Since the holonomy map $\Hol\cn\PP \to \chi$ is a local homeomorphism, 
$\PP_\rho$ is a discrete subset of $\PP$.
For a neighborhood $U$ of $[L]$ in $\PML$, 
let $\PP(\rho, U)$ be the set of projective structures with holonomy $\rho$ such that,  in Thurston coordinates, their projective measured laminations are in $U$. 
By Theorem \ref{12-19}, for every neighborhood $V$ of $\tau$ in $\TT$, if $U$ is sufficiently small, then, for   $C' \cong (\tau',  L')$  in $\PP(\rho, U)$, we have   $\tau' \in V$.
Thus if two projective structures in $\PP(\rho, U)$ share a measured lamination in Thurston coordinates, then they must coincide.

For every $\ep > 0$, if $U$ is sufficiently small, then, for every $C' \cong (\tau', L')$ in $\PP(\rho, U)$ with  $[L'] \in U$, then  
$\angle_\tau(L , L') < \ep$.
Thus, as in Theorem \ref{8-28-12},  we can decompose $C$ and $C'$ by a traintrack  $\mathcal{T} = \{\RR_j\} _j$ on $S$ given by Proposition \ref{120912}.

As in the proof of Theorem \ref{8-28-12}. 
Let $\phi\col S \to C$ and $\phi'\col S \to C'$ be the marking homeomorphisms obtained by Proposition \ref{120912}, so that $\phi(\mathcal{T})$ and $\phi'(\mathcal{T})$ are corresponding admissible traintracks on $C$ and $C'$, respectively. 
Let $T$ be the $(\ep, K)$-nearly straight traintrack on $\tau$ carrying $L$ and $L'$ such that $\phi(\mathcal{T})$ descends by $\kap$. 
Let $T'$ be the $(\ep, K)$-nearly straight traintrack on $\tau'$ such that $\phi'(\mathcal{T})$ maps to by $\kap'$.

Then, for every $\ep > 0$, if $U$ is sufficiently small then, there is a constant $c > 0$ such that,  
the weight ratios $\frac{\mu'(\phi' (\RR_j))}{\mu(\phi (\RR_j))}$ are $\ep$-close to $c$. 
Since $\PP_\rho$ is a discrete subset of $\PP$, unless $C = C'$, we can in addition assume that  either  (Case One) $c > 1$, or (Case Two) $0 < c < 1$ and the ratios are exactly $c$ for all $j$. 

In Case One,  $\mu'(\phi' (\RR_j)) - \mu(\phi (\RR_j))$ is $\ep$-close to a positive multiple of $2\pi$ for each $j$. 
Thus, similarly to the proof of  Theorem \ref{8-28-12} (II), we can show that $C' = Gr_M(C)$ and $M$ is $\ep$-close to  $\LL' - \LL$ calculated on $\mathcal{T}$.

In Case Two, we have $c [L] = [L']$ with $0 < c < 1$.
 Since the ratio $c$ is independent on $j$,   we see that $L$ and $L'$ must be multiloops: Otherwise, letting $F$ be a subsurface of $S$ such that $F \cap L$ is a minimal irrational lamination, the holonomy of $C$ must be different from that of $C'$ on $\pi_1(F)$.
 Then $L - L'$ must be a weighted multiloop, whose weights are $2\pi$-multiples,  since otherwise the holonomy  $C$ must be different from that of $C'$.
 Therefore, letting $M'$ be a multiloop $\LL - \LL'$ on $\phi'(\mathcal{T})$, we have
  $C = \Gr_{M'}(C')$. 
\end{proof}

\bibliographystyle{plain}
\bibliography{../../Reference}

\end{document}